\newtheorem{theorem}{Theorem}[section]
\newtheorem{proposition}[theorem]{Proposition}
\newtheorem{corollary}[theorem]{Corollary}
\newtheorem{lemma}[theorem]{Lemma}
\newtheorem{definition}[theorem]{Definition}
\newenvironment{proof}[1][Proof]{\noindent\textbf{#1.} }{\hfill $\square$}
\begin{document}

\title{Bound state nodal solutions for the non-autonomous Schr\"{o}%
dinger--Poisson system in $\mathbb{R}^{3}$}
\author{Juntao Sun$^{a}$\thanks{%
E-mail address: sunjuntao2008@163.com(J. Sun)}, Tsung-fang Wu$^{b}$\thanks{%
E-mail address: tfwu@nuk.edu.tw (T.-F. Wu)} \\
{\footnotesize $^a$\emph{School of Mathematics and Statistics, Shandong
University of Technology, Zibo, 255049, P.R. China }}\\
{\footnotesize $^b$\emph{Department of Applied Mathematics, National
University of Kaohsiung, Kaohsiung 811, Taiwan }}}
\date{}
\maketitle

\begin{abstract}
In this paper, we study the existence of nodal solutions for the
non-autonomous Schr\"{o}dinger--Poisson system:
\begin{equation*}
\left\{
\begin{array}{ll}
-\Delta u+u+\lambda K(x) \phi u=f(x) |u|^{p-2}u & \text{ in }\mathbb{R}^{3},
\\
-\Delta \phi =K(x)u^{2} & \text{ in }\mathbb{R}^{3},%
\end{array}%
\right.
\end{equation*}%
where $\lambda >0$ is a parameter and $2<p<4$. Under some proper assumptions
on the nonnegative functions $K(x)$ and $f(x)$, but not requiring any
symmetry property, when $\lambda$ is sufficiently small, we find a bounded
nodal solution for the above problem by proposing a new approach, which
changes sign exactly once in $\mathbb{R}^{3}$. In particular, the existence
of a least energy nodal solution is concerned as well.
\end{abstract}

\section{Introduction}

Consider the non-autonomous Schr\"{o}dinger--Poisson system in the form:%
\begin{equation}
\left\{
\begin{array}{ll}
-\Delta u+u+\lambda K(x)\phi u=f\left( x\right) |u|^{p-2}u & \text{ in }%
\mathbb{R}^{3}, \\
-\Delta \phi =K(x)u^{2} & \text{ in }\mathbb{R}^{3},%
\end{array}%
\right.  \tag{$SP_{\lambda }$}
\end{equation}%
where $\lambda >0,$ $2<p<4$ and the functions $f(x)$ and $K(x)$ satisfy the
following assumptions:

\begin{itemize}
\item[$\left( F1\right) $] $f(x)$ is a positive continuous function on $%
\mathbb{R}^{3}$ such that
\begin{equation*}
\lim_{\left\vert x\right\vert \rightarrow \infty }f\left( x\right)
=f_{\infty }>0\text{ uniformly on}\ \mathbb{R}^{3},
\end{equation*}%
and
\begin{equation*}
f_{\max }:=\sup_{x\in \mathbb{R}^{3}}f\left( x\right) <\frac{f_{\infty }}{%
A\left( p\right) ^{\frac{p-2}{2}}},
\end{equation*}%
where%
\begin{equation*}
A\left( p\right) =\left\{
\begin{array}{ll}
\left( \frac{4-p}{2}\right) ^{\frac{1}{p-2}}, & \text{ if }2<p\leq 3, \\
\frac{1}{2}, & \text{ if }3<p<4.%
\end{array}%
\right.
\end{equation*}

\item[$\left( K1\right) $] $K(x)\in L^{\infty }(\mathbb{R}^{3})\backslash
\left\{ 0\right\} $ is a nonnegative function on $\mathbb{R}^{3}.$
\end{itemize}

In quantum mechanics, Schr\"{o}dinger--Poisson systems (SP systems for
short), of the form similar to system $(SP_{\lambda }),$ can be used to
describe the interaction of a charged particle with the electrostatic field.
Indeed, the unknowns $u$ and $\phi $ represent the wave functions associated
with the particle and the electric potentials, respectively. The function $%
K(x)$ denotes a nonnegative density charge, and the local nonlinearity $%
f\left( x\right) |u|^{p-2}u$ (or, more generally, $g(x,u)$) simulates the
interaction effect among many particles. For more details about its physical
meaning, we refer the reader to \cite{BF} and the references therein.

It is well-known that SP systems can be transformed into the nonlinear Schr%
\"{o}dinger equations with a non-local term \cite{BF,CG,R}. Using system $%
(SP_{\lambda })$ as an example, it becomes the following equation%
\begin{equation}
\begin{array}{ll}
-\Delta u+u+\lambda K\left( x\right) \phi _{K,u}u=f\left( x\right)
\left\vert u\right\vert ^{p-2}u & \text{ in }\mathbb{R}^{3},%
\end{array}
\tag*{$\left( E_{\lambda }\right) $}
\end{equation}%
where $\phi _{K,u}(x)=\frac{1}{4\pi }\int_{\mathbb{R}^{3}}\frac{K(y)}{%
\left\vert x-y\right\vert }u^{2}(y)dy.$ Eq. $\left( E_{\lambda }\right) $ is
variational, and its solutions are the critical points of the energy
functional $I_{\lambda }\left( u\right) $ defined in $H^{1}(\mathbb{R}^{3})$
by
\begin{equation*}
I_{\lambda }\left( u\right) =\frac{1}{2}\left\Vert u\right\Vert _{H^{1}}^{2}+%
\frac{\lambda }{4}\int_{\mathbb{R}^{3}}K(x)\phi _{K,u}u^{2}dx-\frac{1}{p}%
\int_{\mathbb{R}^{3}}f(x)\left\vert u\right\vert ^{p}dx,
\end{equation*}%
where $\left\Vert u\right\Vert _{H^{1}}=\left[ \int_{\mathbb{R}^{3}}\left(
\left\vert \nabla u\right\vert ^{2}+u^{2}\right) dx\right] ^{1/2}$ is the
standard norm in $H^{1}(\mathbb{R}^{3}).$ In view of this, variational
methods have been effective tools in finding nontrivial solutions of SP
systems.

In recent years, there has been much attention to SP systems like system $%
(SP_{\lambda })$ on the existence of positive solutions, ground states,
radial solutions and semiclassical states. We refer the reader to \cite%
{AR,AP,CM,CV,CKW,IR,MT,M,R,SCN,SWF,SWF1,ZZ}. More precisely, Ruiz \cite{R}
studied the autonomous SP system%
\begin{equation}
\left\{
\begin{array}{ll}
-\Delta u+u+\lambda \phi u=|u|^{p-2}u & \text{ in }\mathbb{R}^{3}, \\
-\Delta \phi =u^{2} & \text{ in }\mathbb{R}^{3}.%
\end{array}%
\right.  \label{1-4}
\end{equation}%
In order to find nontrivial solutions of system $(\ref{1-4})$ with $2<p<6,$
a Nehari-Pohozaev manifold is constructed, with the aid of the Pohozaev
identity corresponding to system $(\ref{1-4}).$ As a consequence, for $%
\lambda >0$ sufficiently small, two positive radial solutions and one
positive radial solution have been obtained when $2<p<3$ and $3\leq p<6,$
respectively. Moreover, when $\lambda \geq \frac{1}{4},$ it has been shown
that $p=3$ is a critical value for the existence of nontrivial solutions.
The corresponding results have been further improved by Azzollini-Pomponio
\cite{AP} by showing the existence of ground state solutions when $\lambda
>0 $ and $3<p<6.$

Cerami and Varia \cite{CV} studied a class of non-autonomous SP systems
without any symmetry assumptions, i.e., system $(SP_{\lambda })$ with $%
\lambda =1.$ By establishing the compactness lemma and using the Nehari
manifold, when $K(x)$ and $f(x)$ satisfy some suitable assumptions, the
existence of positive ground state and bound state solutions have been
proved for $4<p<6.$ Later, when the mass term $u$ is replaced by $V(x)u$ in
system $(SP_{\lambda }),$ by assuming the decay rate of the coefficients $%
V(x),K(x)$ and $f(x)$, Cerami and Molle \cite{CM} obtained the existence of
positive bound state solution for system $(SP_{\lambda })$ with $\lambda =1$
and $4<p<6$ via the Nehari manifold, which complements the result in \cite%
{CV} in some sense.

Very recently, we \cite{SWF1} investigated the existence of a positive
solution for system $(SP_{\lambda })$ with $2<p<4$ when $\lambda $ is
sufficiently small. Distinguishing from the case of $4\leq p<6,$ we notice
that in this case the (PS)--sequences for the energy functional $I_{\lambda
} $ may not be bounded and $I_{\lambda }(tu)\rightarrow \infty $ as $%
t\rightarrow \infty $ for each $u\in H^{1}(\mathbb{R}^{3})\backslash \left\{
0\right\} $. So variational methods cannot be applied in a standard way,
even restricting $I_{\lambda }$ on the Nehari manifold. Moreover, the
Nehari-Pohozaev manifold presented by Ruiz is also not a ideal choice for
the non-autonomous system like system $(SP_{\lambda })$, since the Pohozaev
identity corresponding to system $(SP_{\lambda })$ is extremely complicated.
For these reasons, in \cite{SWF1} we introduced a filtration of the Nehari
manifold $\mathbf{M}_{\lambda }$ as follows%
\begin{equation*}
\mathbf{M}_{\lambda }(c)=\{u\in \mathbf{M}_{\lambda }:I_{\lambda }(u)<c\}%
\text{ for some }c>0,
\end{equation*}%
and showed that this set $\mathbf{M}_{\lambda }(c)$ under the given
assumptions is the union of two disjoint nonempty sets, namely,
\begin{equation*}
\mathbf{M}_{\lambda }(c)=\mathbf{M}_{\lambda }^{(1)}\cup \mathbf{M}_{\lambda
}^{(2)},
\end{equation*}%
which are both $C^{1}$ sub-manifolds of $\mathbf{M}_{\lambda }$ and natural
constraints of $I_{\lambda }.$ Moreover, $\mathbf{M}_{\lambda }^{(1)}$ is
bounded such that $I_{\lambda }$ is coercive and bounded below on it,
whereas $I_{\lambda }$ is unbounded below on $\mathbf{M}_{\lambda }^{(2)}.$
In fact, $\mathbf{M}_{\lambda }^{(2)}$ may not contain any non-zero critical
point of $I_{\lambda }$ for $\frac{1+\sqrt{73}}{3}<p<4$ (see \cite[Theorem
1.6]{SWF1}). Thus, our approach is seeking a minimizer of $I_{\lambda }$ on
the constraint $\mathbf{M}_{\lambda }^{(1)}.$

Another topic which has received increasingly interest of late years is the
existence of nodal (or sign-changing) solutions for SP systems, see, for
example, \cite{AS,ASS,BF1,CT,I,KS,LXZ,LWZ,SW,WZ}. Recall that a solution $%
(u,\phi )$ to SP systems is called a nodal solution if $u$ changes sign,
i.e., $u^{\pm }\not\equiv 0,$ where
\begin{equation*}
u^{+}(x)=\max \{u(x),0\}\text{ and }u^{-}(x)=\min \{u(x),0\}.
\end{equation*}%
By using the Nehari manifold and gluing solution pieces together, Kim and
Seok \cite{KS} proved the existence of a radial nodal solution with
prescribed numbers of nodal domains for system $(\ref{1-4})$ with $\lambda
>0 $ and $4<p<6$. Almost simultaneously, a similar result to \cite{KS} for $%
4\leq p<6$ has been established by Ianni \cite{I} via a dynamical approach
together with a limit procedure. Of particular note is that all nodal
solutions found in \cite{I,KS} have certain types of symmetries, and thus
the system is required to have a certain group invariance.

In \cite{WZ}, Wang and Zhou studied the following non-autonomous SP system
without any symmetry%
\begin{equation}
\left\{
\begin{array}{ll}
-\Delta u+V(x)u+\phi u=\left\vert u\right\vert ^{p-2}u & \text{ in }\mathbb{R%
}^{3}, \\
-\Delta \phi =u^{2} & \ \text{in }\mathbb{R}^{3}.%
\end{array}%
\right.  \label{1-6}
\end{equation}%
By using the nodal Nehari manifold
\begin{equation*}
\mathbf{N}=\left\{ u\in H\text{ }|\text{ }\left\langle I^{\prime
}(u),u^{+}\right\rangle =\left\langle I^{\prime }(u),u^{-}\right\rangle =0%
\text{ and }u^{\pm }\neq 0\right\}
\end{equation*}%
as well as the Brouwer degree theory, the existence of a least energy nodal
solution for system $(\ref{1-6})$ with $4<p<6$ has been proved when either $%
V(x)$ is a positive constant or $V(x)\in C(\mathbb{R}^{3},\mathbb{R}^{+})$
such that $H\subset H^{1}(\mathbb{R}^{3})$ and the embedding $%
H\hookrightarrow L^{q}(\mathbb{R}^{3})(2<p<6)$ is compact. Applying the same
approach, some similar results to \cite{WZ} have been obtained in \cite%
{AS,ASS,BF1,CT,LXZ,SW} when the nonlinearity is either $g(x,u)$ or $%
f(x)\left\vert u\right\vert ^{p-2}u(4\leq p<6)$. Note that such a $g(x,u)$
is merely a general form of $f(x)\left\vert u\right\vert ^{p-2}u(4\leq p<6)$%
, not covering the case of $2<p<4.$

In \cite{LWZ}, Liu, Wang and Zhang proved the existence of infinitely many
nodal solutions for system $(\ref{1-6})$ with $3<p<6$ when $V(x)$ is
coercive in $\mathbb{R}^{3}$ for recovering the compactness. The proof is
mainly based on the method of invariant sets of descending flow.
Furthermore, in the case of $3<p<4,$ a perturbation approach is also used by
constructing an auxiliary system and passing the limit to the original one.

To the best of our knowledge, there seems no result in the existing
literature on nodal solutions of SP systems in the case of $2<p<4$, except
\cite{LWZ}. Inspired by this fact, in the present paper we are interested in
the existence of a nodal solution for a class of non-autonomous SP systems
when the nonlinearity is like $f(x)\left\vert u\right\vert ^{p-2}u(2<p<4)$,
i.e., system $(SP_{\lambda })$ with $2<p<4.$ It is worth emphasizing that in
this case the existence of a least energy nodal solution is concerned as
well.

We wish to point out that the approaches in \cite%
{AS,ASS,BF1,CT,I,KS,LXZ,SW,WZ} are only valid for the case of $4\leq p<6,$
and that the approach in \cite{LWZ} can only solve the case of $3<p<6.$

In this study, following a part of the idea in our recent paper \cite{SWF1},
we propose a new approach to seek nodal solutions of system $(SP_{\lambda })$
with $2<p<4.$ That is, we construct a nonempty nodal set $\mathbf{N}%
_{\lambda }^{(1)}$ in the bounded set $\mathbf{M}_{\lambda }^{(1)}$
introduced in \cite{SWF1}, where $I_{\lambda }$ is coercive and bounded
below, and then minimize $I_{\lambda }$ on it, not on the nodal Nehari
manifold $\mathbf{N.}$ In fact, such a $\mathbf{N}_{\lambda }^{(1)}$ is a
subset of $\mathbf{N}.$

In analysis, we have to face several challenges. First of all, note that the
nodal set\ $\mathbf{N}_{\lambda }^{(1)}$ is not manifold. Then one cannot
talk about vector fields on $\mathbf{N}_{\lambda }^{(1)}$ and one cannot
easily construct deformations on $\mathbf{N}_{\lambda }^{(1)}.$ As a
consequence, min-max values for $I_{\lambda }$ on $\mathbf{N}_{\lambda
}^{(1)}$ are not automatically critical points of $I_{\lambda }.$ In fact, $%
\mathbf{N}_{\lambda }^{(1)}\cap H^{2}(\mathbb{R}^{3})$ are codimension $2$
submanifolds of $H^{2}(\mathbb{R}^{3})$ (see \cite{BW2,BW}). Secondly, since
$\mathbf{N}_{\lambda }^{(1)}$ is just a subset of the nodal Nehari manifold $%
\mathbf{N}$, it seems not easy to show that $\mathbf{N}_{\lambda }^{(1)}\neq
\emptyset ,$ which has never been involved before. Thirdly, for each $u\in
H^{1}(\mathbb{R}^{3})$ with $u^{\pm }\not\equiv 0,$ the function $\widetilde{%
h}(s,t)=I_{\lambda }\left( su^{+}+tu^{-}\right) $ is not strictly concave on
$(0,\infty )\times (0,\infty )$ when $2<p<4,$ which is totally different
from the case of $4\leq p<6$. Finally, we notice that the decomposition%
\begin{equation*}
\int_{\mathbb{R}^{3}}K(x)\phi _{K,u}u^{2}dx=\int_{\mathbb{R}^{3}}K(x)\phi
_{K,u^{+}}\left( u^{+}\right) ^{2}dx+\int_{\mathbb{R}^{3}}K(x)\phi
_{K,u^{-}}\left( u^{-}\right) ^{2}dx
\end{equation*}%
does not hold in general, making the problem more complicated. In order to
overcome these difficulties, in this paper some new ideas are introduced and
some new estimates are established.

\begin{definition}
$(u,\phi )$ is called a least energy nodal solution of system $(SP_{\lambda
})$, if $(u,\phi )$ is a solution of system $(SP_{\lambda })$ which has the
least energy among all nodal solutions of system $(SP_{\lambda })$.
\end{definition}

We now summarize our main results as follows.

\begin{theorem}
\label{t2}Suppose that $2<p<4,$ and conditions $(F1)$ and $(K1)$ hold. In
addition, we assume that

\begin{itemize}
\item[$(F2)$] there exists $0<r_{f}<1$ such that $f\left( x\right) \geq
f_{\infty }+d_{0}\exp \left( -\left\vert x\right\vert ^{r_{f}}\right) $ for
some $d_{0}>0$ and for all $x\in \mathbb{R}^{3};$

\item[$(K2)$] $K\left( x\right) \lneqq K_{\infty }$ for all $x\in \mathbb{R}%
^{3}$ and $\lim_{\left\vert x\right\vert \rightarrow \infty }K\left(
x\right) =K_{\infty }>0\ $uniformly on$\ \mathbb{R}^{3}.$
\end{itemize}

Then there exists $\lambda ^{\ast }>0$ such that for every $0<\lambda
<\lambda ^{\ast },$ system $(SP_{\lambda })$ admits a nodal solution $%
(u_{\lambda },\phi _{K,u_{\lambda }})\in H^{1}(\mathbb{R}^{3})\times D^{1,2}(%
\mathbb{R}^{3}),$ which changes sign exactly once in $\mathbb{R}^{3}.$
Furthermore, there holds
\begin{equation*}
\left( \frac{S_{p}^{p}}{f_{\max }}\right) ^{\frac{1}{p-2}}\leq \left\Vert
u_{\lambda }^{\pm }\right\Vert _{H^{1}}<\left( \frac{2S_{p}^{p}}{\left(
4-p\right) f_{\max }}\right) ^{\frac{1}{p-2}},
\end{equation*}%
and
\begin{equation*}
\left\Vert \phi _{K,u_{\lambda}}\right\Vert _{D^{1,2}}\leq\overline{S}%
^{-1}S_{12/5}^{-2}K_{\max }\left( \frac{2S_{p}^{p}}{\left( 4-p\right)
f_{\max }}\right) ^{\frac{2}{p-2}},
\end{equation*}%
where $S_{p}$ is the best constant for the embedding of $H^{1}(\mathbb{R}%
^{3})$ in $L^{p}(\mathbb{R}^{3})$ with $2<p<4,$ $\overline{S}$ is the best
constant for the embedding of $D^{1,2}(\mathbb{R}^{3})$ in $L^{6}(\mathbb{R}%
^{3}),$ and $S_{12/5}=S_{p}$ with $p=12/5$.
\end{theorem}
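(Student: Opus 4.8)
The plan is to produce the nodal solution as a constrained minimizer of $I_\lambda$ over a nodal set $\mathbf{N}_\lambda^{(1)}$ contained in the bounded branch $\mathbf{M}_\lambda^{(1)}$ of the Nehari manifold from \cite{SWF1}, and then to upgrade this minimizer to a weak solution. I would begin by recalling from \cite{SWF1} that, for $\lambda$ small, $\mathbf{M}_\lambda^{(1)}$ consists of those $u\in\mathbf{M}_\lambda$ at which the fibering map $t\mapsto I_\lambda(tu)$ has a strict local maximum at $t=1$, equivalently $(p-2)\Vert u\Vert_{H^1}^2>(4-p)\lambda\int_{\mathbb{R}^3}K\phi_{K,u}u^2\,dx$, that this pins down the norm bound $(S_p^p/f_{\max})^{1/(p-2)}\le\Vert u\Vert_{H^1}<(2S_p^p/((4-p)f_{\max}))^{1/(p-2)}$, and that $I_\lambda$ is coercive and bounded below on $\mathbf{M}_\lambda^{(1)}$. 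I would then set
\[
\mathbf{N}_\lambda^{(1)}=\bigl\{u\in H^1(\mathbb{R}^3):u^\pm\not\equiv0,\ \langle I_\lambda'(u),u^+\rangle=\langle I_\lambda'(u),u^-\rangle=0,\ \text{and }u^+,u^-\text{ satisfy the branch-(1) inequality}\bigr\},
\]
observe at once that $\mathbf{N}_\lambda^{(1)}\subset\mathbf{N}$, and record that every $u\in\mathbf{N}_\lambda^{(1)}$ inherits the two-sided bound above for each of $u^+$ and $u^-$.

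The first real task is to show $\mathbf{N}_\lambda^{(1)}\neq\emptyset$. Fixing $u$ with $u^\pm\not\equiv0$, I would study the two-variable map $\widetilde h(s,t)=I_\lambda(su^++tu^-)$ on $(0,\infty)^2$; because of the cross term $2\lambda\int_{\mathbb{R}^3}K\phi_{K,u^+}(u^-)^2\,dx$ the splitting of the nonlocal energy fails and, for $2<p<4$, $\widetilde h$ is not concave, so I cannot simply take a maximum. Instead I would locate the desired branch-(1) critical point of $\widetilde h$ by a degree/continuity argument: I would show that the vector field $(s,t)\mapsto(\langle I_\lambda'(su^++tu^-),su^+\rangle,\langle I_\lambda'(su^++tu^-),tu^-\rangle)$ points inward on the boundary of a suitable rectangle in $(0,\infty)^2$ dictated by the branch-(1) norm bounds, whence Brouwer degree yields a zero $(s_0,t_0)$ with $s_0u^++t_0u^-\in\mathbf{N}_\lambda^{(1)}$; the smallness of $\lambda$ is what keeps the cross term a controllable perturbation so that the rectangle exists. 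With nonemptiness in hand I would set $c_\lambda=\inf_{\mathbf{N}_\lambda^{(1)}}I_\lambda$, which is positive and finite by the coercivity on $\mathbf{M}_\lambda^{(1)}$, and take a minimizing sequence $\{u_n\}$, which is bounded, passing to a weak limit $u_\lambda$ with $u_n^\pm\rightharpoonup u_\lambda^\pm$. The crux of the compactness is to prevent mass from escaping to infinity, where the system degenerates to the autonomous problem with constants $f_\infty$ and $K_\infty$; here I would use $(F1)$, in particular the gap $f_{\max}<f_\infty/A(p)^{(p-2)/2}$, to show that $c_\lambda$ lies strictly below the corresponding threshold of the problem at infinity, and use the exponential lower bound $(F2)$ together with $(K2)$ to produce a test configuration realizing this strict inequality. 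A splitting (Brezis--Lieb / Lions) argument then rules out vanishing and dichotomy for both $u_n^+$ and $u_n^-$, forcing $u_\lambda^\pm\not\equiv0$ and strong convergence $u_n\to u_\lambda$ in $H^1$, so that $u_\lambda\in\mathbf{N}_\lambda^{(1)}$ and $I_\lambda(u_\lambda)=c_\lambda$.

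It remains to show the minimizer is a weak solution, and this is the main obstacle: $\mathbf{N}_\lambda^{(1)}$ is only a subset of the codimension-two object $\mathbf{N}$ and is not a manifold, so a minimizer need not be a free critical point automatically. I would argue by contradiction with a quantitative deformation lemma: assuming $I_\lambda'(u_\lambda)\neq0$, a pseudo-gradient flow lowers the energy on a neighbourhood of $u_\lambda$, while the degree computation from the nonemptiness step shows that for each nearby profile the $(s,t)$-reprojection onto $\mathbf{N}_\lambda^{(1)}$ persists (the relevant $2\times2$ Jacobian is nonsingular precisely on branch (1), where the fiber second derivative is sign-definite, which is exactly where the non-strict-concavity is circumvented); composing the flow with this reprojection yields an element of $\mathbf{N}_\lambda^{(1)}$ of energy below $c_\lambda$, a contradiction.

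Finally, I would show $u_\lambda$ changes sign exactly once: if it had three or more nodal domains one could split off a piece and, using the branch-(1) structure and the corrected nonlocal estimates, build a strictly lower-energy element of $\mathbf{N}_\lambda^{(1)}$, contradicting minimality. The stated bounds on $\Vert u_\lambda^\pm\Vert_{H^1}$ are then just the branch-(1) inequalities, and the bound on $\Vert\phi_{K,u_\lambda}\Vert_{D^{1,2}}$ follows by testing $-\Delta\phi=Ku^2$ against $\phi$ and applying Hölder together with the Sobolev embeddings $D^{1,2}(\mathbb{R}^3)\hookrightarrow L^6(\mathbb{R}^3)$ and $H^1(\mathbb{R}^3)\hookrightarrow L^{12/5}(\mathbb{R}^3)$ and the upper $H^1$ bound. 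I expect the interplay of the non-manifold structure of $\mathbf{N}_\lambda^{(1)}$ with the lost nonlocal splitting in the deformation step to be the hardest part.
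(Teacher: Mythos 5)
Your overall skeleton matches the paper's: minimize $I_{\lambda}$ over a nodal subset $\mathbf{N}_{\lambda}^{(1)}$ of the bounded branch $\mathbf{M}_{\lambda}^{(1)}$, establish an energy threshold strictly below $\alpha_{\lambda}^{-}+\alpha_{\lambda}^{\infty}$ to recover compactness, and use a degree/deformation argument to turn the constrained minimizer into a free critical point (the paper does this in the reverse order, first manufacturing a Palais--Smale sequence near $\mathbf{N}_{\lambda}^{(1)}$ via the Leray--Schauder continuation principle and then proving its strong convergence, but that is a legitimate variant). However, your nonemptiness step contains a genuine gap, and it is precisely the step the paper flags as the novel difficulty. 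You propose to take an \emph{arbitrary} $u$ with $u^{\pm}\not\equiv 0$ and rescale it onto the nodal Nehari set by a Brouwer-degree argument on a rectangle ``dictated by the branch-(1) norm bounds.'' This cannot work for generic $u$. First, the zero $(s_{0},t_{0})$ of the vector field sits near the first Nehari point of each fibering map, so $\Vert s_{0}u^{+}\Vert_{H^{1}}$ is comparable to $\left(\Vert u^{+}\Vert_{H^{1}}^{p}/\int f|u^{+}|^{p}\right)^{1/(p-2)}$, which has a universal lower bound by Sobolev but \emph{no} upper bound: choosing $u^{+}$ so that $\int f|u^{+}|^{p}/\Vert u^{+}\Vert_{H^{1}}^{p}$ is small pushes $\Vert s_{0}u^{+}\Vert_{H^{1}}$ past $D_{1}$, the field then still points outward on the face $\{\Vert su^{+}\Vert_{H^{1}}=D_{1}\}$, and the degree on your rectangle is zero. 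Second, even when both rescaled pieces satisfy $\Vert s_{0}u^{+}\Vert_{H^{1}},\Vert t_{0}u^{-}\Vert_{H^{1}}<D_{1}$, membership in $\mathbf{M}_{\lambda}^{(1)}$ additionally requires the energy sublevel condition $I_{\lambda}<C(p)\left(S_{p}^{p}/f_{\infty}\right)^{2/(p-2)}$, and a short computation with $C(p)=\frac{A(p)(p-2)}{2p}\left(\frac{2}{4-p}\right)^{2/(p-2)}$ shows this is \emph{not} implied by the norm bounds (the norm bound only yields $I_{\lambda}\lesssim\frac{p-2}{p}D_{1}^{2}$, which exceeds the threshold since $A(p)\le 1$). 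The paper circumvents both problems at once by an explicit construction (Proposition 3.5): it takes $u=s_{\lambda}^{\ast}v_{\lambda}-t_{n}^{\ast}w_{n}$, where $v_{\lambda}$ is the known positive solution with $I_{\lambda}(v_{\lambda})=\alpha_{\lambda}^{-}$ and $w_{n}$ are far translates of the positive solution of the autonomous limit problem, and uses the exponential interaction estimates (the decay of the tails beats the lower bound in $(F2)$ because $r_{f}<1-\varepsilon$) to show the energy stays strictly below $\alpha_{\lambda}^{-}+\alpha_{\lambda}^{\infty}<C(p)\left(S_{p}^{p}/f_{\infty}\right)^{2/(p-2)}$, followed by a separate contradiction argument excluding the branch $\mathbf{M}_{\lambda}^{(2)}$. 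This single construction delivers nonemptiness \emph{and} the compactness threshold simultaneously; in your write-up these two tasks are decoupled and the first one is not actually achieved. A secondary concern: you run the compactness argument on a bare minimizing sequence, whereas the paper's concentration--compactness step (Proposition 5.1) uses $I_{\lambda}'(u_{n})\to 0$ essentially (to pass to the limit equation for the translated profiles); without first producing a Palais--Smale sequence, ruling out dichotomy on the non-manifold set $\mathbf{N}_{\lambda}^{(1)}$ is harder than you indicate.
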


\begin{theorem}
\label{t3}Suppose that $2<p<4$ and conditions ${(F1)}-{(F2)}$ and $\left(
K1\right) $ hold. In addition, we assume that

\begin{itemize}
\item[$\left( K3\right) $] $K(x)\in L^{2}(\mathbb{R}^{3})$ and $%
\lim_{\left\vert x\right\vert \rightarrow \infty }K\left( x\right) =0.$
\end{itemize}

Then there exists $\overline{\lambda }^{\ast }>0$ such that for each $%
0<\lambda <\overline{\lambda }^{\ast },$ system $(SP_{\lambda })$ admits a
nodal solution $(u_{\lambda },\phi _{K,u_{\lambda }})\in H^{1}(\mathbb{R}%
^{3})\times D^{1,2}(\mathbb{R}^{3}),$ which changes sign exactly once in $%
\mathbb{R}^{3}.$ Furthermore, there holds
\begin{equation*}
\left( \frac{S_{p}^{p}}{f_{\max }}\right) ^{\frac{1}{p-2}}\leq \left\Vert
u_{\lambda }^{\pm }\right\Vert _{H^{1}}<\left( \frac{2S_{p}^{p}}{\left(
4-p\right) f_{\max }}\right) ^{\frac{1}{p-2}},
\end{equation*}
and
\begin{equation*}
\left\Vert \phi _{K,u_{\lambda}}\right\Vert _{D^{1,2}}\leq\overline{S}%
^{-1}S_{12/5}^{-2}K_{\max }\left( \frac{2S_{p}^{p}}{\left( 4-p\right)
f_{\max }}\right) ^{\frac{2}{p-2}}.
\end{equation*}
\end{theorem}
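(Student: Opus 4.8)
The plan is to run the same variational scheme proposed in the introduction and used for Theorem~\ref{t2}, namely to minimize $I_{\lambda}$ over the nodal set $\mathbf{N}_{\lambda}^{(1)}$ sitting inside the bounded branch $\mathbf{M}_{\lambda}^{(1)}$ of \cite{SWF1}, and to isolate the single place where the behaviour of $K$ at infinity matters, namely the recovery of compactness. I would first recall from \cite{SWF1} the splitting $\mathbf{M}_{\lambda}(c)=\mathbf{M}_{\lambda}^{(1)}\cup\mathbf{M}_{\lambda}^{(2)}$, with $\mathbf{M}_{\lambda}^{(1)}$ bounded and $I_{\lambda}$ coercive and bounded below on it, and define $\mathbf{N}_{\lambda}^{(1)}\subset\mathbf{N}$ as the set of sign-changing $u$ whose parts $u^{+}$ and $u^{-}$ both lie on this bounded branch.

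To see $\mathbf{N}_{\lambda}^{(1)}\neq\emptyset$, I would fix $u$ with $u^{\pm}\not\equiv 0$ and analyse the fibering map $\widetilde{h}(s,t)=I_{\lambda}(su^{+}+tu^{-})$ on $(0,\infty)\times(0,\infty)$. Since strict concavity fails for $2<p<4$, I cannot invoke a unique maximiser; instead I would use the smallness of $\lambda$ to locate, inside the region cut out by $\mathbf{M}_{\lambda}^{(1)}$, a pair $(s_{0},t_{0})$ solving $\langle I_{\lambda}^{\prime}(w),w^{+}\rangle=\langle I_{\lambda}^{\prime}(w),w^{-}\rangle=0$ for $w=s_{0}u^{+}+t_{0}u^{-}$, and verify $w\in\mathbf{M}_{\lambda}^{(1)}$. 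The point to stress is that this whole step --- and the two-sided bound on $\|u_{\lambda}^{\pm}\|_{H^{1}}$, together with the $D^{1,2}$-bound on $\phi_{K,u_{\lambda}}$ obtained from the Hardy--Littlewood--Sobolev and Sobolev inequalities encoded by $\overline{S}$ and $S_{12/5}$ --- uses only $(F1)$, $(F2)$, $(K1)$ and $\lambda$ small. It is therefore identical to the corresponding step for Theorem~\ref{t2} and may be quoted verbatim.

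The genuinely new ingredient is the compactness of a minimising sequence $\{u_{n}\}$ for $\alpha_{\lambda}:=\inf_{\mathbf{N}_{\lambda}^{(1)}}I_{\lambda}$, where $(K3)$ replaces $(K2)$. Boundedness is automatic from $\mathbf{N}_{\lambda}^{(1)}\subset\mathbf{M}_{\lambda}^{(1)}$, so $u_{n}\rightharpoonup u_{\lambda}$ in $H^{1}$ along a subsequence. Here $(K3)$, $K\in L^{2}(\mathbb{R}^{3})$ with $K(x)\to 0$, does two things: it makes the nonlocal functional $u\mapsto\int_{\mathbb{R}^{3}}K(x)\phi_{K,u}u^{2}\,dx$ weakly continuous, so that term passes to the limit and the troublesome cross terms $\int_{\mathbb{R}^{3}}K(x)\phi_{K,u^{+}}(u^{-})^{2}\,dx$ are further suppressed; and it reduces the limit problem at infinity to the purely local autonomous equation $-\Delta v+v=f_{\infty}|v|^{p-2}v$. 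I would then invoke $(F2)$, i.e. $f(x)\geq f_{\infty}+d_{0}\exp(-|x|^{r_{f}})$, to establish the strict energy inequality that forbids any mass from escaping to infinity: the slow decay $r_{f}<1$ guarantees that the gain coming from $f$ exceeding $f_{\infty}$ on a bounded region dominates the exponentially small interaction created by separating bumps, placing $\alpha_{\lambda}$ strictly below the relevant threshold built from the least energy of the limit problem. This rules out dichotomy and vanishing, forces $u_{n}\to u_{\lambda}$ strongly, and yields $u_{\lambda}\in\mathbf{N}_{\lambda}^{(1)}$ with $I_{\lambda}(u_{\lambda})=\alpha_{\lambda}$.

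Finally, because $\mathbf{N}_{\lambda}^{(1)}$ is only a subset of $\mathbf{N}$ and not a manifold (its $H^{2}$-trace is a codimension-$2$ submanifold, cf. \cite{BW2,BW}), attaining the infimum does not by itself give $I_{\lambda}^{\prime}(u_{\lambda})=0$; I would upgrade $u_{\lambda}$ to a genuine critical point by the quantitative deformation argument of Theorem~\ref{t2}, showing that if $I_{\lambda}^{\prime}(u_{\lambda})\neq 0$ one could deform $u_{\lambda}$ within $\mathbf{N}_{\lambda}^{(1)}$ so as to strictly lower $I_{\lambda}$, contradicting minimality. That $u_{\lambda}$ changes sign exactly once then follows from $u_{\lambda}^{\pm}\not\equiv 0$ and the least-energy character of $\alpha_{\lambda}$ (extra nodal domains would allow a strictly cheaper competitor in $\mathbf{N}_{\lambda}^{(1)}$), while the displayed norm estimates are inherited from $u_{\lambda}\in\mathbf{M}_{\lambda}^{(1)}$. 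I expect the main obstacle to be precisely the strict threshold estimate under $(K3)$: one must choose the comparison nodal function and track the local $p$-interaction against the exponential gain from $(F2)$ with enough precision to certify $\alpha_{\lambda}<$ threshold, the saving grace relative to Theorem~\ref{t2} being that $K\to 0$ makes the nonlocal contribution to this competition asymptotically negligible.
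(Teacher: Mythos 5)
Your overall scheme matches the paper's: minimize $I_{\lambda}$ on $\mathbf{N}_{\lambda}^{(1)}\subset\mathbf{M}_{\lambda}^{(1)}$, use $(K3)$ to make the nonlocal term weakly continuous and to replace the limit problem at infinity by the local equation $-\Delta u+u=f_{\infty}|u|^{p-2}u$, and use $(F2)$ to push the level strictly below $\alpha_{\lambda}^{-}+\alpha_{0}^{\infty}$. You also correctly observe that the two-sided bound on $\Vert u_{\lambda}^{\pm}\Vert_{H^{1}}$ and the $D^{1,2}$ bound on $\phi_{K,u_{\lambda}}$ come for free from membership in $\mathbf{M}_{\lambda}^{(1)}$. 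However, there are two concrete gaps. First, your route to $\mathbf{N}_{\lambda}^{(1)}\neq\emptyset$ --- fix an \emph{arbitrary} $u$ with $u^{\pm}\not\equiv 0$ and find a critical point of $\widetilde{h}(s,t)$ for small $\lambda$ --- does not work: a critical point of the fibering map only places $s_{0}u^{+}+t_{0}u^{-}$ in the nodal Nehari set $\mathbf{N}_{\lambda}$, not in $\mathbf{M}_{\lambda}^{(1)}$, since for generic $u$ the resulting energy can exceed the cutoff $C(p)(S_{p}^{p}/f_{\infty})^{2/(p-2)}$, and even below the cutoff one must exclude the large-norm branch $\mathbf{M}_{\lambda}^{(2)}$. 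The paper instead builds a \emph{specific} competitor $\overline{s}_{\lambda}^{\ast}\overline{v}_{\lambda}-\overline{t}_{n}^{\ast}\overline{w}_{n}$, where $\overline{v}_{\lambda}$ is the positive solution on $\mathbf{M}_{\lambda}^{(1)}$ and $\overline{w}_{n}$ are far translates of the Kwong soliton of the local limit equation (Propositions analogous to \ref{h-5}, i.e.\ Proposition \ref{h-6} and Lemma \ref{h-7}); the exponential-decay bookkeeping with $0<\varepsilon<1-r_{f}$ simultaneously yields nonemptiness \emph{and} the strict threshold $\theta_{\lambda}^{-}<\alpha_{\lambda}^{-}+\alpha_{0}^{\infty}$, and a separate norm estimate rules out $\mathbf{M}_{\lambda}^{(2)}$. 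So this step is neither generic nor literally verbatim from Theorem \ref{t2}: the bumps and the threshold change because the limit problem changes.

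Second, your order of operations is off. A bare minimizing sequence on $\mathbf{N}_{\lambda}^{(1)}$ carries no information about $I_{\lambda}^{\prime}(u_{n})$, precisely because $\mathbf{N}_{\lambda}^{(1)}$ is not a manifold, and the paper's compactness argument (Propositions \ref{d1} and \ref{d2}) uses $I_{\lambda}^{\prime}(u_{n})\to 0$ in an essential way to identify the Nehari-type identities satisfied by the translated weak limits. Hence one cannot first extract a strong limit and \emph{afterwards} upgrade it to a critical point by deformation; the deformation/degree argument (Proposition \ref{f5-1} and Corollary \ref{f5-2}, via the Leray--Schauder continuation principle) must come first, to manufacture a Palais--Smale sequence at level $\theta_{\lambda}^{-}$ that stays close to $\mathbf{N}_{\lambda}^{(1)}$ and satisfies $\Phi_{\lambda}^{\pm}\to 1$; only then does the threshold inequality give strong convergence to some $u_{\lambda}\in\mathbf{N}_{\lambda}^{(1)}$ which is automatically a critical point. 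With these two repairs your outline coincides with the paper's proof.
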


According to \cite[Theorem 1.6]{SWF1}, we have the following theorem on the
existence of a least energy nodal solution.

\begin{theorem}
\label{t4}Suppose that $\frac{1+\sqrt{73}}{3}<p<4,$ and conditions ${(F1)}$
and ${(K1)}$ hold. In addition, we assume that

\begin{itemize}
\item[$\left( D_{K,f}\right) $] the functions $f(x),K(x)\in C^{1}(\mathbb{R}%
^{3})$ satisfy $\langle \nabla f(x),x\rangle \leq 0$ and%
\begin{equation*}
\frac{3p^{2}-2p-24}{2(6-p)}K(x)+\frac{p-2}{2}\langle \nabla K(x),x\rangle
\geq 0.
\end{equation*}
\end{itemize}

If $(u_{\lambda },\phi _{K,u_{\lambda }})$ is the nodal solution as
described in Theorem \ref{t2} or \ref{t3}, then $(u_{\lambda },\phi
_{K,u_{\lambda }})$ is a least energy nodal solution of system $(SP_{\lambda
}).$
\end{theorem}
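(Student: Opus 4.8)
The plan is to show that the nodal solution $u_{\lambda}$ produced by Theorem \ref{t2} or \ref{t3}, which by construction minimizes $I_{\lambda}$ over the set $\mathbf{N}_{\lambda}^{(1)}\subset\mathbf{N}$, in fact has the least energy among \emph{all} nodal solutions. Since $u_{\lambda}$ realizes $\inf_{\mathbf{N}_{\lambda}^{(1)}}I_{\lambda}$, it suffices to prove that every nodal solution of $(SP_{\lambda})$ belongs to $\mathbf{N}_{\lambda}^{(1)}$; the energy comparison is then immediate. Thus the whole argument reduces to a membership statement, and the hypotheses $(D_{K,f})$ and $\frac{1+\sqrt{73}}{3}<p<4$ enter precisely at that point.

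First I would fix an arbitrary nodal solution $(v,\phi_{K,v})$ of $(SP_{\lambda})$, so that $v$ is a nonzero critical point of $I_{\lambda}$ with $v^{\pm}\not\equiv 0$. Testing $(E_{\lambda})$ against $v^{+}$ and $v^{-}$ and using $v^{+}v^{-}=0$, $\nabla v^{+}\cdot\nabla v^{-}=0$ together with $\phi_{K,v}=\phi_{K,v^{+}}+\phi_{K,v^{-}}$, I obtain
\begin{equation*}
\|v^{\pm}\|_{H^{1}}^{2}+\lambda\int_{\mathbb{R}^{3}}K(x)\phi_{K,v}\left(v^{\pm}\right)^{2}dx=\int_{\mathbb{R}^{3}}f(x)\left|v^{\pm}\right|^{p}dx,
\end{equation*}
which shows $v\in\mathbf{N}$. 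Dropping the nonnegative Poisson term and applying the Sobolev inequality $\int f\left|v^{\pm}\right|^{p}\le f_{\max}S_{p}^{-p}\|v^{\pm}\|_{H^{1}}^{p}$ yields the lower bound $\|v^{\pm}\|_{H^{1}}\ge\left(S_{p}^{p}/f_{\max}\right)^{1/(p-2)}$, matching the bounds recorded in Theorems \ref{t2}--\ref{t3}.

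The decisive step is to show that $v$ is of type $(1)$, i.e. $v\in\mathbf{M}_{\lambda}^{(1)}$ and, more precisely, $v\in\mathbf{N}_{\lambda}^{(1)}$. Here I would invoke \cite[Theorem 1.6]{SWF1}: since $v$ is a nonzero critical point of $I_{\lambda}$, it satisfies both the Nehari identity $\langle I_{\lambda}'(v),v\rangle=0$ and the Pohozaev identity associated with $(E_{\lambda})$. Combining these two identities and using $(D_{K,f})$ --- whose coefficient $\frac{3p^{2}-2p-24}{2(6-p)}$ is nonnegative exactly because $p>\frac{1+\sqrt{73}}{3}$, the positive root of $3p^{2}-2p-24=0$ --- one rules out membership of $v$ in the unbounded component $\mathbf{M}_{\lambda}^{(2)}$. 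Hence $v$ lies in the bounded component $\mathbf{M}_{\lambda}^{(1)}$, and I would then transfer the resulting norm control to the two parts $v^{\pm}$ (each of which satisfies the Nehari identity above) so as to conclude $\|v^{\pm}\|_{H^{1}}<\left(\frac{2S_{p}^{p}}{(4-p)f_{\max}}\right)^{1/(p-2)}$ and thus $v\in\mathbf{N}_{\lambda}^{(1)}$. I expect this localization of the global Pohozaev information to the positive and negative parts to be the main obstacle, since the non-autonomous Pohozaev identity is genuinely complicated and the coupling term $\int_{\mathbb{R}^{3}}K(x)\phi_{K,v^{+}}\left(v^{-}\right)^{2}dx$ prevents a clean splitting of the energy; this is exactly why $(D_{K,f})$ and the restricted range of $p$ are imposed.

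Finally, once $v\in\mathbf{N}_{\lambda}^{(1)}$ is established for every nodal solution $v$, the minimizing property of $u_{\lambda}$ gives
\begin{equation*}
I_{\lambda}(v)\ge\inf_{\mathbf{N}_{\lambda}^{(1)}}I_{\lambda}=I_{\lambda}(u_{\lambda}),
\end{equation*}
so $u_{\lambda}$ attains the least energy among all nodal solutions of $(SP_{\lambda})$, which is the assertion of Theorem \ref{t4}.
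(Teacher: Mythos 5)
Your overall strategy is the one the paper itself uses: the paper offers no separate argument for Theorem \ref{t4} beyond invoking \cite[Theorem 1.6]{SWF1}, whose content is precisely that under $(D_{K,f})$ and $\frac{1+\sqrt{73}}{3}<p<4$ the component $\mathbf{M}_{\lambda}^{(2)}$ contains no nonzero critical point of $I_{\lambda}$, so that any nodal solution in the sublevel set lands in $\mathbf{M}_{\lambda}^{(1)}$, hence in $\mathbf{N}_{\lambda}^{(1)}$, and is dominated in energy by the minimizer $u_{\lambda}$. Two corrections, though. First, your reduction ``it suffices to prove that every nodal solution belongs to $\mathbf{N}_{\lambda}^{(1)}$'' is too strong and cannot be carried out as stated: $\mathbf{N}_{\lambda}^{(1)}\subset\mathbf{M}_{\lambda}^{(1)}$ lives inside the sublevel set $\mathbf{M}_{\lambda}\bigl(C(p)(S_{p}^{p}/f_{\infty})^{2/(p-2)}\bigr)$, and nothing forces an arbitrary nodal solution $v$ to satisfy $I_{\lambda}(v)<C(p)(S_{p}^{p}/f_{\infty})^{2/(p-2)}$. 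You need a case split: if $I_{\lambda}(v)\geq C(p)(S_{p}^{p}/f_{\infty})^{2/(p-2)}$ the comparison is immediate, since by $(\ref{18-0})$ and the estimates in Proposition \ref{h-5} one has $\theta_{\lambda}^{-}<\alpha_{\lambda}^{-}+\alpha_{\lambda}^{\infty}<2\alpha_{\lambda}^{\infty}<C(p)(S_{p}^{p}/f_{\infty})^{2/(p-2)}$ for $\lambda$ small; only in the complementary case does the dichotomy $\mathbf{M}_{\lambda}^{(1)}\cup\mathbf{M}_{\lambda}^{(2)}$ apply and the Pohozaev--Nehari argument of \cite[Theorem 1.6]{SWF1} kick in.

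Second, the ``main obstacle'' you anticipate --- localizing the Pohozaev identity to $v^{+}$ and $v^{-}$ separately --- does not arise. Membership in $\mathbf{N}_{\lambda}^{(1)}$ requires only $v\in\mathbf{M}_{\lambda}^{(1)}$ (a constraint on $I_{\lambda}(v)$ and on $\left\Vert v\right\Vert_{H^{1}}$ as a whole) together with $\left\langle I_{\lambda}^{\prime}(v),v^{\pm}\right\rangle=0$ and $v^{\pm}\not\equiv 0$; the latter are automatic for a critical point. Once $v\in\mathbf{M}_{\lambda}^{(1)}$ gives $\left\Vert v\right\Vert_{H^{1}}<D_{1}$, the bounds $\left\Vert v^{\pm}\right\Vert_{H^{1}}\leq\left\Vert v\right\Vert_{H^{1}}<D_{1}<\left(\frac{2S_{p}^{p}}{(4-p)f_{\max}}\right)^{1/(p-2)}$ follow from $\left\Vert v\right\Vert_{H^{1}}^{2}=\left\Vert v^{+}\right\Vert_{H^{1}}^{2}+\left\Vert v^{-}\right\Vert_{H^{1}}^{2}$; no sign-wise Pohozaev identity is needed. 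With the case split added, your argument is complete and coincides with the paper's.
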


This paper is organized as follows. After introducing various preliminaries
in Section 2, we give the estimates of energy and construct the
Palais--Smale sequences in Sections 3 and 4, respectively. In Sections 5 and
6, we prove Theorems \ref{t2} and \ref{t3}, respectively.

\section{Preliminaries}

As pointed out in the section of Introduction, system $(SP_{\lambda })$ can
be transferred into a nonlocal Schr\"{o}dinger equation, i.e., Eq. $\left(
E_{\lambda }\right) $, and its corresponding energy functional is $%
I_{\lambda }(u)$. It is not difficult to prove that $I_{\lambda }$ is a $%
C^{1}$ functional with the derivative given by%
\begin{equation*}
\left\langle I_{\lambda }^{\prime }(u),\varphi \right\rangle =\int_{\mathbb{R%
}^{3}}\left( \nabla u\nabla \varphi +u\varphi +\lambda K(x)\phi
_{K,u}u\varphi -f(x)|u|^{p-2}u\varphi \right) dx
\end{equation*}%
for all $\varphi \in H^{1}(\mathbb{R}^{3}),$ where $I_{\lambda }^{\prime }$
is the Fr\'{e}chet derivative of $I_{\lambda }.$ Note that $(u,\phi )\in
H^{1}(\mathbb{R}^{3})\times D^{1,2}(\mathbb{R}^{3})$ is a solution of system
$\left( SP_{\lambda }\right) $ if and only if $u$ is a critical point of $%
I_{\lambda }$ and $\phi =\phi _{K,u}$.

Next, we give a characterization of the weak convergence for the Poisson
term. The proof can be made in a similar argument as in \cite{IR}.

\begin{lemma}
\label{h2}Suppose that condition $(K1)$ holds. Define the operator $\Pi :%
\left[ H^{1}(\mathbb{R}^{3})\right] ^{4}\rightarrow \mathbb{R}$ by%
\begin{equation*}
\Pi \left( u,v,w,z\right) =\int_{\mathbb{R}^{3}}\int_{\mathbb{R}^{3}}\frac{%
K(x)K\left( y\right) }{\left\vert x-y\right\vert }u\left( x\right) v\left(
x\right) w\left( y\right) z\left( y\right) dxdy
\end{equation*}%
for all $\left( u,v,w,z\right) \in \left[ H^{1}(\mathbb{R}^{3})\right] ^{4}.$
Then for all $\{u_{n}\},\{v_{n}\},\{w_{n}\}\subset H^{1}(\mathbb{R}^{3})$
satisfying $u_{n}\rightharpoonup u$ in $H^{1}(\mathbb{R}^{3}),v_{n}%
\rightharpoonup v$ in $H^{1}(\mathbb{R}^{3}),w_{n}\rightharpoonup w$ in $%
H^{1}(\mathbb{R}^{3})$ and for all $z\in H^{1}(\mathbb{R}^{3}),$ there holds%
\begin{equation*}
\Pi \left( u_{n},v_{n},w_{n},z\right) \rightarrow \Pi \left( u,v,w,z\right) .
\end{equation*}
\end{lemma}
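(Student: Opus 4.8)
The plan is to reformulate $\Pi$ as a bilinear pairing controlled by the Hardy--Littlewood--Sobolev (HLS) inequality, and then to exploit the asymmetry between the three weakly convergent slots and the single \emph{fixed} slot $z$. First I would record the admissible exponents: in $\mathbb{R}^{3}$ the kernel $|x-y|^{-1}$ satisfies the HLS inequality $\left\vert \iint \frac{g(x)h(y)}{|x-y|}\,dx\,dy\right\vert \le C\Vert g\Vert_{6/5}\Vert h\Vert_{6/5}$, since $\tfrac{5}{6}+\tfrac{5}{6}+\tfrac{1}{3}=2$. Because $u_{n},v_{n},w_{n},z\in H^{1}(\mathbb{R}^{3})\hookrightarrow L^{12/5}(\mathbb{R}^{3})$ and $K\in L^{\infty}$, H\"older's inequality gives $Ku_{n}v_{n},\,Kw_{n}z\in L^{6/5}(\mathbb{R}^{3})$, so $\Pi$ is well defined; writing $I\!f(x)=\int \frac{f(y)}{|x-y|}\,dy$, I would record that $\Pi(u_{n},v_{n},w_{n},z)=\langle Ku_{n}v_{n},\,I(Kw_{n}z)\rangle$, where $\langle\cdot,\cdot\rangle$ is the $L^{6/5}$--$L^{6}$ duality pairing and $I:L^{6/5}\to L^{6}$ is bounded (the Riesz potential mapping property, again HLS, with $\tfrac{1}{6}=\tfrac{5}{6}-\tfrac{2}{3}$).

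The argument then rests on two convergence claims for the $L^{6/5}$ factors. Weak convergence in $H^{1}$ yields boundedness together with $u_{n}\to u$, $v_{n}\to v$, $w_{n}\to w$ strongly in $L^{12/5}_{\mathrm{loc}}$ and a.e., after passing to a subsequence. \textbf{Claim (B):} $Kw_{n}z\to Kwz$ strongly in $L^{6/5}$. Since $\Vert K(w_{n}-w)z\Vert_{6/5}\le \Vert K\Vert_{\infty}\Vert (w_{n}-w)z\Vert_{6/5}$, I would split into a ball $B_{R}$ and its complement: on $B_{R}$ the local strong convergence of $w_{n}$ forces $\Vert (w_{n}-w)z\Vert_{L^{6/5}(B_{R})}\to 0$, while on $B_{R}^{c}$ H\"older gives $\Vert (w_{n}-w)z\Vert_{L^{6/5}(B_{R}^{c})}\le \Vert w_{n}-w\Vert_{12/5}\,\Vert z\Vert_{L^{12/5}(B_{R}^{c})}$, which is small uniformly in $n$ because $z$ is fixed with $\Vert z\Vert_{L^{12/5}(B_{R}^{c})}\to 0$ as $R\to\infty$. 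Consequently, by boundedness of $I$, one gets $I(Kw_{n}z)\to I(Kwz)$ strongly in $L^{6}$.

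\textbf{Claim (A):} $Ku_{n}v_{n}\rightharpoonup Kuv$ weakly in $L^{6/5}$. Here $u_{n}v_{n}$ is bounded in $L^{6/5}$ and converges to $uv$ in $L^{6/5}_{\mathrm{loc}}$; testing against any $\varphi\in L^{6}$ and splitting over $B_{R},B_{R}^{c}$ (using the uniform $L^{6/5}$ bound to control the tail) yields $u_{n}v_{n}\rightharpoonup uv$, and multiplication by $K\in L^{\infty}$ preserves weak $L^{6/5}$ convergence since $K\varphi\in L^{6}$. I would then combine the two claims via $\langle g_{n},Ih_{n}\rangle-\langle g,Ih\rangle=\langle g_{n},Ih_{n}-Ih\rangle+\langle g_{n}-g,Ih\rangle$ with $g_{n}=Ku_{n}v_{n}$, $h_{n}=Kw_{n}z$: the first term vanishes by the strong $L^{6}$ convergence of $Ih_{n}$ against the bounded $g_{n}$, and the second by the weak $L^{6/5}$ convergence of $g_{n}$ against $Ih=I(Kwz)\in L^{6}$. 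Since the limit is uniquely identified, the full sequence converges, proving the lemma.

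The main obstacle is Claim (B): extracting genuine \emph{strong} $L^{6/5}$ convergence of $Kw_{n}z$ out of merely \emph{weak} $H^{1}$ convergence of $w_{n}$. On $\mathbb{R}^{3}$ there is no compact embedding to lean on, so this step relies crucially on the fixed integrability of $z$ to annihilate the tail, and it is precisely this strong-times-weak structure (strongly convergent $Ih_{n}$ paired against weakly convergent $g_{n}$) that allows the nonlocal term to pass to the limit despite the absence of compactness.
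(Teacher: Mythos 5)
Your proof is correct and complete: the well-definedness via Hardy--Littlewood--Sobolev, the identity $\Pi(u_{n},v_{n},w_{n},z)=\langle Ku_{n}v_{n},\,I(Kw_{n}z)\rangle$, the strong $L^{6/5}$ convergence of $Kw_{n}z$ (local Rellich compactness plus the tail of the fixed $z$), the weak $L^{6/5}$ convergence of $Ku_{n}v_{n}$, and the weak--strong pairing all fit together without gaps. The paper itself gives no proof, stating only that the argument is similar to that in \cite{IR}; the Riesz-potential decomposition you use is precisely that standard argument, so you have supplied the details the authors chose to omit.
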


In the following lemma we summarize some useful properties on the function $%
\phi _{K,u}$, which have been obtained in \cite{AP,CV}.

\begin{lemma}
\label{h1}Suppose that condition $(K1)$ holds. Then for each $u\in H^{1}(%
\mathbb{R}^{3})$, we have the following statements.\newline
$\left( i\right) $ $\left\Vert \phi _{K,u}\right\Vert _{D^{1,2}}\leq
\overline{S}^{-1}S_{12/5}^{-2}K_{\max }\left\Vert u\right\Vert _{H^{1}}^{2}$
holds. As a consequence, there holds%
\begin{equation*}
\int_{\mathbb{R}^{3}}K(x)\phi _{K,v}u^{2}dx\leq \overline{S}%
^{-2}S_{12/5}^{-4}K_{\max }^{2}\Vert v\Vert _{H^{1}}^{2}\Vert u\Vert
_{H^{1}}^{2};
\end{equation*}%
$\left( ii\right)$ Both $\phi _{K,u}\geq 0$ and $\phi _{K,u}\left( x\right) =%
\frac{1}{4\pi }\int_{\mathbb{R}^{3}}\frac{K\left( y\right) u^{2}\left(
y\right) }{\left\vert x-y\right\vert }dy$ hold;\newline
$\left( iii\right) $ For any $t>0,\ \phi _{K,tu}=t^{2}\phi _{K,u}$ holds;%
\newline
$\left( iv\right) $ If $u_{n}\rightharpoonup u$ in $H^{1}(\mathbb{R}^{3}),$
then $\Phi \left[ u_{n}\right] \rightharpoonup \Phi \left[ u\right] $ in $%
D^{1,2}(\mathbb{R}^{3}),$ where the operator $\Phi :H^{1}(\mathbb{R}%
^{3})\rightarrow D^{1,2}(\mathbb{R}^{3})$ as $\Phi \left[ u\right] =\phi
_{K,u};$\newline
$\left( v\right) $ If we, in addition, assume that condition $\left(
K3\right) $ holds, then
\begin{equation*}
\int_{\mathbb{R}^{3}}K(x)\phi _{K,u_{n}}u_{n}^{2}dx\rightarrow \int_{\mathbb{%
R}^{3}}K(x)\phi _{K,u}u^{2}dx\text{ as }n\rightarrow \infty ,
\end{equation*}%
when $u_{n}\rightharpoonup u$ in $H^{1}(\mathbb{R}^{3}).$
\end{lemma}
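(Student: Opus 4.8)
The plan is to treat (ii), (iii) and (i) as direct consequences of the variational definition of $\phi_{K,u}$, and to concentrate the real work on the weak continuity (iv) and the compactness statement (v), the latter being the only place where the extra hypothesis $(K3)$ enters.

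First I would pin down $\phi_{K,u}$ itself. For $u\in H^1(\mathbb{R}^3)$ one has $u\in L^{12/5}(\mathbb{R}^3)$ by the Sobolev embedding, so $u^2\in L^{6/5}(\mathbb{R}^3)$, and since $K\in L^\infty$ by $(K1)$ the right-hand side $Ku^2$ lies in $L^{6/5}(\mathbb{R}^3)=(L^6(\mathbb{R}^3))^\ast$. Hence Lax--Milgram (equivalently the Riesz representation theorem in the Hilbert space $D^{1,2}(\mathbb{R}^3)$) produces a unique weak solution $\phi_{K,u}\in D^{1,2}(\mathbb{R}^3)$ of $-\Delta\phi=Ku^2$. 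Convolving with the Newtonian kernel $\frac{1}{4\pi|x|}$ and invoking the Hardy--Littlewood--Sobolev inequality shows that $\frac{1}{4\pi}\int_{\mathbb{R}^3}\frac{K(y)u^2(y)}{|x-y|}\,dy$ also solves the equation and lies in $D^{1,2}$; uniqueness identifies it with $\phi_{K,u}$, giving the representation in (ii), and nonnegativity of the integrand gives $\phi_{K,u}\ge 0$. Part (iii) then follows at once from this formula (or from uniqueness applied to $-\Delta\phi=t^2Ku^2$). For (i), I would test the weak formulation with $\phi_{K,u}$ itself, so that $\|\phi_{K,u}\|_{D^{1,2}}^2=\int_{\mathbb{R}^3}K\phi_{K,u}u^2\,dx\le K_{\max}\|\phi_{K,u}\|_{L^6}\|u\|_{L^{12/5}}^2$, and then apply $\|\phi_{K,u}\|_{L^6}\le\overline{S}^{-1}\|\phi_{K,u}\|_{D^{1,2}}$ and $\|u\|_{L^{12/5}}\le S_{12/5}^{-1}\|u\|_{H^1}$; dividing by $\|\phi_{K,u}\|_{D^{1,2}}$ yields the stated bound, and the displayed consequence is the same H\"older estimate applied to $\int K\phi_{K,v}u^2\,dx$ followed by the bound just obtained for $\|\phi_{K,v}\|_{D^{1,2}}$.

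For (iv) I would argue by boundedness and identification of the limit. By (i) the sequence $\{\phi_{K,u_n}\}$ is bounded in $D^{1,2}(\mathbb{R}^3)$ (since $\{u_n\}$ is bounded in $H^1$), so along a subsequence $\phi_{K,u_n}\rightharpoonup\psi$ in $D^{1,2}$. To identify $\psi$, I would test against $\varphi\in C_c^\infty(\mathbb{R}^3)$: the weak formulation gives $\int\nabla\phi_{K,u_n}\cdot\nabla\varphi\,dx=\int Ku_n^2\varphi\,dx$, and on the fixed compact support of $\varphi$ the Rellich theorem gives $u_n\to u$ in $L^{12/5}_{loc}$, hence $u_n^2\to u^2$ in $L^{6/5}_{loc}$, so the right-hand side converges to $\int Ku^2\varphi\,dx=\int\nabla\phi_{K,u}\cdot\nabla\varphi\,dx$. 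Thus $\langle\psi,\varphi\rangle_{D^{1,2}}=\langle\phi_{K,u},\varphi\rangle_{D^{1,2}}$ on the dense subspace $C_c^\infty(\mathbb{R}^3)$, and together with the uniform bound this forces $\psi=\phi_{K,u}$; uniqueness of the limit upgrades subsequential convergence to convergence of the whole sequence.

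The main obstacle is (v), and this is exactly where $(K3)$ is indispensable: Lemma \ref{h2} only yields continuity of the quartic form when three of the four slots vary, which is not enough to pass to the limit in $\int K\phi_{K,u_n}u_n^2\,dx$. The plan is to write $\int_{\mathbb{R}^3}K\phi_{K,u_n}u_n^2\,dx=\int_{\mathbb{R}^3}\phi_{K,u_n}\,(Ku_n^2)\,dx$ and to combine the weak convergence $\phi_{K,u_n}\rightharpoonup\phi_{K,u}$ in $L^6$ (from (iv) and $D^{1,2}\hookrightarrow L^6$) with a strong convergence $Ku_n^2\to Ku^2$ in $L^{6/5}(\mathbb{R}^3)$, since a weak-times-strong pairing passes to the limit. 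The strong convergence is the crux: splitting $\mathbb{R}^3$ into a ball $B_R$ and its complement, on $B_R$ one has $\|K(u_n^2-u^2)\|_{L^{6/5}(B_R)}\le K_{\max}\|u_n-u\|_{L^{12/5}(B_R)}\|u_n+u\|_{L^{12/5}(B_R)}\to0$ by the compact embedding $H^1(B_R)\hookrightarrow L^{12/5}(B_R)$, while on $\{|x|>R\}$ H\"older gives $\|K(u_n^2-u^2)\|_{L^{6/5}(|x|>R)}\le\|K\|_{L^2(|x|>R)}\|u_n^2-u^2\|_{L^3}$, which is uniformly small once $R$ is large because $K\in L^2(\mathbb{R}^3)$ by $(K3)$ and $\|u_n^2-u^2\|_{L^3}$ is bounded. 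Letting $n\to\infty$ and then $R\to\infty$ yields $Ku_n^2\to Ku^2$ in $L^{6/5}$, and hence the desired convergence of the energy term.
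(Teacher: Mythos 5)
Your proof is correct. The paper itself gives no argument for this lemma, stating only that these properties "have been obtained in \cite{AP,CV}", and the route you take --- Lax--Milgram plus the Newtonian-potential representation for $(i)$--$(iii)$, testing against $C_c^{\infty}$ together with Rellich to identify the weak limit in $(iv)$, and the weak--strong pairing in $L^{6}\times L^{6/5}$ with the $\Vert K\Vert_{L^{2}(|x|>R)}$ tail estimate for $(v)$ --- is exactly the standard argument carried out in those references, so there is nothing to fault and no genuine divergence to report.
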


Define the Nehari manifold
\begin{equation*}
\mathbf{M}_{\lambda }=\left\{ u\in H^{1}(\mathbb{R}^{3})\backslash \{0\}%
\text{ }|\text{ }\left\langle I_{\lambda }^{\prime }\left( u\right)
,u\right\rangle =0\right\} .
\end{equation*}%
Then $u\in \mathbf{M}_{\lambda }$ if and only if
\begin{equation*}
\left\Vert u\right\Vert _{H^{1}}^{2}+\lambda \int_{\mathbb{R}^{3}}K(x)\phi
_{K,u}u^{2}dx-\int_{\mathbb{R}^{3}}f(x)|u|^{p}dx=0.
\end{equation*}%
Moreover, it follows from the Sobolev inequality that
\begin{eqnarray*}
\left\Vert u\right\Vert _{H^{1}}^{2} &\leq &\left\Vert u\right\Vert
_{H^{1}}^{2}+\lambda \int_{\mathbb{R}^{3}}K(x)\phi _{K,u}u^{2}dx \\
&=&\int_{\mathbb{R}^{3}}f(x)\left\vert u\right\vert ^{p}dx\leq
S_{p}^{-p}f_{\max }\left\Vert u\right\Vert _{H^{1}}^{p}\text{ for all }u\in
\mathbf{M}_{\lambda },
\end{eqnarray*}%
this implies\ that
\begin{equation}
\left\Vert u\right\Vert _{H^{1}}\geq \left( \frac{S_{p}^{p}}{f_{\max }}%
\right) ^{\frac{1}{p-2}}\text{ for all }u\in \mathbf{M}_{\lambda },
\label{2}
\end{equation}%
where $S_{p}$ is the best Sobolev constant for the embedding of $H^{1}(%
\mathbb{R}^{3})$ in $L^{p}(\mathbb{R}^{3}).$

As we know, the Nehari manifold $\mathbf{M}_{\lambda }$ is closely related
to the behavior of the function $h_{u}:t\rightarrow I_{\lambda }\left(
tu\right) $ for $t>0.$ Such map is known as fibering map. About its theory
and application, we refer the reader to \cite{BB,BZ,DP,P1,P2}. For $u\in
H^{1}(\mathbb{R}^{3}),$ we have%
\begin{equation*}
h_{u}\left( t\right) =\frac{t^{2}}{2}\left\Vert u\right\Vert _{H^{1}}^{2}+%
\frac{\lambda t^{4}}{4}\int_{\mathbb{R}^{3}}K(x)\phi _{K,u}u^{2}dx-\frac{%
t^{p}}{p}\int_{\mathbb{R}^{3}}f(x)\left\vert u\right\vert ^{p}dx.
\end{equation*}%
By a calculation on the first and second derivatives, we find%
\begin{eqnarray*}
h_{u}^{\prime }\left( t\right)  &=&t\left\Vert u\right\Vert
_{H^{1}}^{2}+\lambda t^{3}\int_{\mathbb{R}^{3}}K(x)\phi
_{K,u}u^{2}dx-t^{p-1}\int_{\mathbb{R}^{3}}f(x)\left\vert u\right\vert ^{p}dx,
\\
h_{u}^{\prime \prime }\left( t\right)  &=&\left\Vert u\right\Vert
_{H^{1}}^{2}+3\lambda t^{2}\int_{\mathbb{R}^{3}}K(x)\phi
_{K,u}u^{2}dx-\left( p-1\right) t^{p-2}\int_{\mathbb{R}^{3}}f(x)\left\vert
u\right\vert ^{p}dx
\end{eqnarray*}%
and%
\begin{equation*}
th_{u}^{\prime }\left( t\right) =\left\Vert tu\right\Vert
_{H^{1}}^{2}+\lambda \int_{\mathbb{R}^{3}}K(x)\phi _{K,tu}\left( tu\right)
^{2}dx-\int_{\mathbb{R}^{3}}f(x)\left\vert tu\right\vert ^{p}dx.
\end{equation*}%
Thus, for any $u\in H^{1}(\mathbb{R}^{3})\backslash \left\{ 0\right\} $ and $%
t>0,$ $h_{u}^{\prime }\left( t\right) =0$ holds if and only if $tu\in
\mathbf{M}_{\lambda }$. In particular, $h_{u}^{\prime }\left( 1\right) =0$
if and only if $u\in \mathbf{M}_{\lambda }.$ It is natural to split $\mathbf{%
M}_{\lambda }$ into three parts corresponding to local minima, local maxima
and points of inflection. Accordingly, following \cite{T}, we define%
\begin{eqnarray*}
\mathbf{M}_{\lambda }^{+} &=&\{u\in \mathbf{M}_{\lambda }\text{ }|\text{ }%
h_{u}^{\prime \prime }\left( 1\right) >0\}; \\
\mathbf{M}_{\lambda }^{0} &=&\{u\in \mathbf{M}_{\lambda }\text{ }|\text{ }%
h_{u}^{\prime \prime }\left( 1\right) =0\}; \\
\mathbf{M}_{\lambda }^{-} &=&\{u\in \mathbf{M}_{\lambda }\text{ }|\text{ }%
h_{u}^{\prime \prime }\left( 1\right) <0\}.
\end{eqnarray*}

In order to look for nodal solutions of system $(SP)_{\lambda },$ we define
the nodal Nehari manifold by%
\begin{equation*}
\mathbf{N}_{\lambda }=\left\{ u\in H^{1}(\mathbb{R}^{3})\text{ }|\text{ }%
\left\langle I_{\lambda }^{\prime }\left( u\right) ,u^{+}\right\rangle
=\left\langle I_{\lambda }^{\prime }\left( u\right) ,u^{-}\right\rangle =0%
\text{ and }u^{\pm }\neq 0\right\} ,
\end{equation*}%
which is a subset of the Nehari manifold $\mathbf{M}_{\lambda }.$ Clearly, $%
u\in \mathbf{N}_{\lambda }$ if and only if%
\begin{equation*}
\left\langle I_{\lambda }^{\prime }\left( u\right) ,u^{+}\right\rangle
=\left\langle I_{\lambda }^{\prime }\left( u^{+}\right) ,u^{+}\right\rangle
+\lambda \int_{\mathbb{R}^{3}}K(x)\phi _{K,u^{-}}\left( u^{+}\right) ^{2}dx=0
\end{equation*}%
and
\begin{equation*}
\left\langle I_{\lambda }^{\prime }\left( u\right) ,u^{-}\right\rangle
=\left\langle I_{\lambda }^{\prime }\left( u^{-}\right) ,u^{-}\right\rangle
+\lambda \int_{\mathbb{R}^{3}}K(x)\phi _{K,u^{+}}\left( u^{-}\right)
^{2}dx=0.
\end{equation*}%
Moreover, by virtue of Lemma \ref{h1} $(ii),$ it is easy to verify that%
\begin{equation*}
\int_{\mathbb{R}^{3}}K(x)\phi _{K,u^{-}}\left( u^{+}\right) ^{2}dx=\int_{%
\mathbb{R}^{3}}K(x)\phi _{K,u^{+}}\left( u^{-}\right) ^{2}dx.
\end{equation*}%
For each $u\in \mathbf{M}_{\lambda },$ there holds%
\begin{eqnarray}
h_{u}^{\prime \prime }\left( 1\right)  &=&\left\Vert u\right\Vert
_{H^{1}}^{2}+3\lambda \int_{\mathbb{R}^{3}}K(x)\phi _{K,u}u^{2}dx-\left(
p-1\right) \int_{\mathbb{R}^{3}}f(x)\left\vert u\right\vert ^{p}dx  \notag \\
&=&-\left( p-2\right) \left\Vert u\right\Vert _{H^{1}}^{2}+\lambda \left(
4-p\right) \int_{\mathbb{R}^{3}}K(x)\phi _{K,u}u^{2}dx  \label{2-1} \\
&=&-2\left\Vert u\right\Vert _{H^{1}}^{2}+\left( 4-p\right) \int_{\mathbb{R}%
^{3}}f(x)\left\vert u\right\vert ^{p}dx  \label{2-2} \\
&\leq &-2\left\Vert u\right\Vert _{H^{1}}^{2}+\left( 4-p\right)
S_{p}^{-p}f_{\max }\Vert u\Vert _{H^{1}}^{p}.  \notag
\end{eqnarray}%
By $(\ref{2})$ and $(\ref{2-2}),$ we have
\begin{eqnarray*}
I_{\lambda }(u) &=&\frac{1}{4}\left\Vert u\right\Vert _{H^{1}}^{2}-\frac{4-p%
}{4p}\int_{\mathbb{R}^{3}}f(x)|u|^{p}dx \\
&>&\frac{p-2}{4p}\left\Vert u\right\Vert _{H^{1}}^{2} \\
&\geq &\frac{p-2}{4p}\left( \frac{S_{p}^{p}}{f_{\max }}\right) ^{\frac{2}{p-2%
}}>0\text{ for all }u\in \mathbf{M}_{\lambda }^{-},
\end{eqnarray*}%
which indicates that $I_{\lambda }$ is coercive and bounded below on $%
\mathbf{M}_{\lambda }^{-}.$

Let
\begin{equation*}
C\left( p\right) =\frac{A\left( p\right) \left( p-2\right) }{2p}\left( \frac{%
2}{4-p}\right) ^{\frac{2}{p-2}}\text{ for }2<p<4.
\end{equation*}%
It is not difficult to verify that $C\left( p\right) $ is increasing on $%
2<p<4$ and that%
\begin{equation*}
C\left( p\right) >\left\{
\begin{array}{ll}
\frac{\sqrt{e}\left( p-2\right) }{p}, & \text{ if }2<p\leq 3, \\
\frac{e\left( p-2\right) }{2p}, & \text{ if }3<p<4.%
\end{array}%
\right.
\end{equation*}%
Following \cite{SWF1}, for any $u\in \mathbf{M}_{\lambda }$ with $I_{\lambda
}\left( u\right) <C\left( p\right) \left( \frac{S_{p}^{p}}{f_{\infty }}%
\right) ^{\frac{2}{p-2}},$ we deduce that
\begin{eqnarray}
C\left( p\right) \left( \frac{S_{p}^{p}}{f_{\infty }}\right) ^{\frac{2}{p-2}%
} &>&I_{\lambda }(u)  \notag \\
&=&\frac{1}{2}\left\Vert u\right\Vert _{H^{1}}^{2}+\frac{\lambda }{4}\int_{%
\mathbb{R}^{3}}K(x)\phi _{K,u}u^{2}dx-\frac{1}{p}\int_{\mathbb{R}%
^{3}}f(x)|u|^{p}dx  \notag \\
&=&\frac{p-2}{2p}\left\Vert u\right\Vert _{H^{1}}^{2}-\frac{\lambda (4-p)}{4p%
}\int_{\mathbb{R}^{3}}K(x)\phi _{K,u}u^{2}dx  \label{2-4} \\
&\geq &\frac{p-2}{2p}\left\Vert u\right\Vert _{H^{1}}^{2}-\lambda \overline{S%
}^{-2}S_{12/5}^{-4}K_{\max }^{2}\left( \frac{4-p}{4p}\right) \Vert u\Vert
_{H^{1}}^{4}.  \label{2-5}
\end{eqnarray}%
It follows from (\ref{2-5}) that for $2<p<4$ and $0<\lambda <\lambda _{0},$
there exist two positive numbers $D_{1}$ and $D_{2}$ satisfying%
\begin{equation*}
\sqrt{A\left( p\right) }\left( \frac{2S_{p}^{p}}{f_{\infty }\left(
4-p\right) }\right) ^{\frac{1}{p-2}}<D_{1}<\left( \frac{2S_{p}^{p}}{f_{\max
}\left( 4-p\right) }\right) ^{\frac{1}{p-2}}<\sqrt{2}\left( \frac{2S_{p}^{p}%
}{f_{\infty }\left( 4-p\right) }\right) ^{\frac{1}{p-2}}<D_{2}
\end{equation*}%
such that%
\begin{equation*}
\left\Vert u\right\Vert _{H^{1}}<D_{1}\text{ or }\left\Vert u\right\Vert
_{H^{1}}>D_{2},
\end{equation*}%
where
\begin{equation*}
\lambda _{0}:=\frac{p-2}{2(4-p)}\left[ 1-A\left( p\right) \left( \frac{%
f_{\max }}{f_{\infty }}\right) ^{\frac{2}{p-2}}\right] \left( \frac{%
f_{\infty }(4-p)}{pS_{p}^{p}}\right) ^{\frac{2}{p-2}}\overline{S}%
^{2}S_{12/5}^{4}K_{\max }^{-2}>0.
\end{equation*}%
Note that $D_{1}\rightarrow \infty $ as $p\rightarrow 4^{-}.$ Thus, there
holds
\begin{eqnarray*}
\mathbf{M}_{\lambda }\left( C\left( p\right) \left( \frac{S_{p}^{p}}{%
f_{\infty }}\right) ^{\frac{2}{p-2}}\right) &=&\left\{ u\in \mathbf{M}%
_{\lambda }:J_{\lambda }\left( u\right) <C\left( p\right) \left( \frac{%
S_{p}^{p}}{f_{\infty }}\right) ^{\frac{2}{p-2}}\right\} \\
&=&\mathbf{M}_{\lambda }^{(1)}\cup \mathbf{M}_{\lambda }^{(2)},
\end{eqnarray*}%
where
\begin{equation*}
\mathbf{M}_{\lambda }^{(1)}:=\left\{ u\in \mathbf{M}_{\lambda }\left(
C\left( p\right) \left( \frac{S_{p}^{p}}{f_{\infty }}\right) ^{\frac{2}{p-2}%
}\right) :\left\Vert u\right\Vert _{H^{1}}<D_{1}\right\}
\end{equation*}%
and
\begin{equation*}
\mathbf{M}_{\lambda }^{(2)}:=\left\{ u\in \mathbf{M}_{\lambda }\left(
C\left( p\right) \left( \frac{S_{p}^{p}}{f_{\infty }}\right) ^{\frac{2}{p-2}%
}\right) :\left\Vert u\right\Vert _{H^{1}}>D_{2}\right\} .
\end{equation*}%
For $2<p<4$ and $0<\lambda <\lambda _{0},$ we further have
\begin{equation}
\left\Vert u\right\Vert _{H^{1}}<D_{1}<\left( \frac{2S_{p}^{p}}{f_{\max
}\left( 4-p\right) }\right) ^{\frac{1}{p-2}}\text{ for all }u\in \mathbf{M}%
_{\lambda }^{(1)}  \label{4-5}
\end{equation}%
and%
\begin{equation}
\left\Vert u\right\Vert _{H^{1}}>D_{2}>\sqrt{2}\left( \frac{2S_{p}^{p}}{%
f_{\infty }\left( 4-p\right) }\right) ^{\frac{1}{p-2}}\text{ for all }u\in
\mathbf{M}_{\lambda }^{(2)}.  \label{4-6}
\end{equation}%
From $(\ref{2-2}),(\ref{4-5})$ and the Sobolev inequality it follows that
\begin{equation*}
h_{\lambda ,u}^{\prime \prime }\left( 1\right) \leq -2\left\Vert
u\right\Vert _{H^{1}}^{2}+\left( 4-p\right) S_{p}^{-p}f_{\max }\left\Vert
u\right\Vert _{H^{1}}^{p}<0\text{ for all }u\in \mathbf{M}_{\lambda }^{(1)}.
\end{equation*}%
Using $\left( \ref{4-6}\right) $ leads to
\begin{eqnarray*}
\frac{1}{4}\left\Vert u\right\Vert _{H^{1}}^{2}-\frac{4-p}{4p}\int_{\mathbb{R%
}^{3}}f(x)|u|^{p}dx &=&J_{\lambda }\left( u\right) \\
&<&\frac{A\left( p\right) (p-2)}{2p}\left( \frac{2S_{p}^{p}}{f_{\infty
}\left( 4-p\right) }\right) ^{\frac{2}{p-2}} \\
&<&\frac{p-2}{2p}\left( \frac{2S_{p}^{p}}{f_{\infty }\left( 4-p\right) }%
\right) ^{\frac{2}{p-2}} \\
&<&\frac{p-2}{4p}\left\Vert u\right\Vert _{H^{1}}^{2}\text{ for all }u\in
\mathbf{M}_{\lambda }^{(2)}.
\end{eqnarray*}%
This implies that%
\begin{equation*}
2\left\Vert u\right\Vert _{H^{1}}^{2}<\left( 4-p\right) \int_{\mathbb{R}%
^{3}}f(x)|u|^{p}dx\text{ for all }u\in \mathbf{M}_{\lambda }^{(2)}.
\end{equation*}%
Combining the above inequality with $(\ref{2-2})$ gives%
\begin{equation*}
h_{\lambda ,u}^{\prime \prime }\left( 1\right) >0\text{ for all }u\in
\mathbf{M}_{\lambda }^{(2)}.
\end{equation*}

Set
\begin{equation*}
\mathbf{N}_{\lambda }^{\left( 1\right) }=\left\{ u\in \mathbf{M}_{\lambda
}^{(1)}\text{ }|\text{ }\left\langle I_{\lambda }^{\prime }\left( u\right)
,u^{+}\right\rangle =\left\langle I_{\lambda }^{\prime }\left( u\right)
,u^{-}\right\rangle =0\text{ and }u^{\pm }\not\equiv 0\right\} .
\label{2-15}
\end{equation*}%
Clearly, $\mathbf{N}_{\lambda }^{\left( 1\right) }$ is a subset of $\mathbf{M%
}_{\lambda }^{(1)},$ and also of $\mathbf{N}_{\lambda }.$ Moreover, for $%
u\in \mathbf{N}_{\lambda }^{\left( 1\right) },$ there holds%
\begin{equation}
\left( \frac{S_{p}^{p}}{f_{\max }}\right) ^{\frac{1}{p-2}}\leq \left\Vert
u^{\pm }\right\Vert _{H^{1}}<D_{1}<\left( \frac{2S_{p}^{p}}{\left(
4-p\right) f_{\max }}\right) ^{\frac{1}{p-2}}.  \label{2-20}
\end{equation}

Let%
\begin{eqnarray}
J_{\lambda }^{+}\left( u^{+},u^{-}\right)  &=&\frac{1}{2}\left\Vert
u^{+}\right\Vert _{H^{1}}^{2}+\frac{\lambda }{4}\left( \int_{\mathbb{R}%
^{3}}K(x)\phi _{K,u^{-}}(u^{+})^{2}dx+\int_{\mathbb{R}^{3}}K(x)\phi
_{K,u^{+}}(u^{+})^{2}dx\right)   \notag \\
&&-\frac{1}{p}\int_{\mathbb{R}^{3}}f(x)\left\vert u^{+}\right\vert ^{p}dx
\label{2-21}
\end{eqnarray}%
and%
\begin{eqnarray}
J_{\lambda }^{-}\left( u^{+},u^{-}\right)  &=&\frac{1}{2}\left\Vert
u^{-}\right\Vert _{H^{1}}^{2}+\frac{\lambda }{4}\left( \int_{\mathbb{R}%
^{3}}K(x)\phi _{K,u^{+}}(u^{-})^{2}dx+\int_{\mathbb{R}^{3}}K(x)\phi
_{K,u^{-}}(u^{-})^{2}dx\right)   \notag \\
&&-\frac{1}{p}\int_{\mathbb{R}^{3}}f(x)\left\vert u^{-}\right\vert ^{p}dx.
\label{2-25}
\end{eqnarray}%
Now we denote the function $\widetilde{h}\left( s,t\right) $ by%
\begin{equation}
\widetilde{h}\left( s,t\right) =J_{\lambda }^{+}\left( su^{+},tu^{-}\right)
+J_{\lambda }^{-}\left( su^{+},tu^{-}\right) \text{ for }s,t>0.  \label{2-28}
\end{equation}%
Clearly, $\widetilde{h}\left( s,t\right) =I_{\lambda }(su^{+}+tu^{-}).$
Moreover, a direct calculation shows that%
\begin{eqnarray*}
\frac{\partial }{\partial s}\widetilde{h}\left( s,t\right)  &=&s\left\Vert
u^{^{+}}\right\Vert _{H^{1}}^{2}+\lambda st^{2}\int_{\mathbb{R}^{3}}K(x)\phi
_{K,u^{-}}(u^{+})^{2}dx+\lambda s^{3}\int_{\mathbb{R}^{3}}K(x)\phi
_{K,u^{^{+}}}(u^{+})^{2}dx \\
&&-s^{p-1}\int_{\mathbb{R}^{3}}f(x)\left\vert u^{^{+}}\right\vert ^{p}dx, \\
\frac{\partial }{\partial t}\widetilde{h}\left( s,t\right)  &=&t\left\Vert
u^{^{-}}\right\Vert _{H^{1}}^{2}+\lambda s^{2}t\int_{\mathbb{R}^{3}}K(x)\phi
_{K,u^{+}}(u^{-})^{2}dx+\lambda t^{3}\int_{\mathbb{R}^{3}}K(x)\phi
_{K,u^{^{-}}}(u^{-})^{2}dx \\
&&-t^{p-1}\int_{\mathbb{R}^{3}}f(x)\left\vert u^{^{-}}\right\vert ^{p}dx
\end{eqnarray*}%
and%
\begin{eqnarray*}
\frac{\partial ^{2}}{\partial s^{2}}\widetilde{h}\left( s,t\right)
&=&\left\Vert u^{^{+}}\right\Vert _{H^{1}}^{2}+\lambda t^{2}\int_{\mathbb{R}%
^{3}}K(x)\phi _{K,u^{-}}(u^{+})^{2}dx+3\lambda s^{2}\int_{\mathbb{R}%
^{3}}K(x)\phi _{K,u^{^{+}}}(u^{+})^{2}dx \\
&&-\left( p-1\right) s^{p-2}\int_{\mathbb{R}^{3}}f(x)\left\vert
u^{^{+}}\right\vert ^{p}dx, \\
\frac{\partial ^{2}}{\partial t^{2}}\widetilde{h}\left( s,t\right)
&=&\left\Vert u^{^{-}}\right\Vert _{H^{1}}^{2}+\lambda s^{2}\int_{\mathbb{R}%
^{3}}K(x)\phi _{K,u^{+}}(u^{-})^{2}dx+3\lambda t^{2}\int_{\mathbb{R}%
^{3}}K(x)\phi _{K,u^{^{-}}}(u^{-})^{2}dx \\
&&-\left( p-1\right) t^{p-2}\int_{\mathbb{R}^{3}}f(x)\left\vert
u^{^{-}}\right\vert ^{p}dx.
\end{eqnarray*}%
If $u\in \mathbf{N}_{\lambda }^{\left( 1\right) },$ then $\frac{\partial }{%
\partial s}\widetilde{h}\left( 1,1\right) =\frac{\partial }{\partial t}%
\widetilde{h}\left( 1,1\right) =0,$%
\begin{eqnarray*}
\frac{\partial ^{2}}{\partial s^{2}}\widetilde{h}\left( 1,1\right)
&=&2\lambda \int_{\mathbb{R}^{3}}K(x)\phi _{K,u^{+}}(u^{+})^{2}dx-\left(
p-2\right) \int_{\mathbb{R}^{3}}f(x)\left\vert u^{+}\right\vert ^{p}dx \\
&=&-\left( p-2\right) \left( \left\Vert u^{+}\right\Vert
_{H^{1}}^{2}+\lambda \int_{\mathbb{R}^{3}}K(x)\phi
_{K,u^{-}}(u^{+})^{2}dx\right)  \\
&&+\left( 4-p\right) \lambda \int_{\mathbb{R}^{3}}K(x)\phi
_{K,u^{+}}(u^{+})^{2}dx \\
&=&-2\left( \left\Vert u^{+}\right\Vert _{H^{1}}^{2}+\lambda \int_{\mathbb{R}%
^{3}}K(x)\phi _{K,u^{-}}(u^{+})^{2}dx\right) +\left( 4-p\right) \int_{%
\mathbb{R}^{3}}f(x)\left\vert u^{+}\right\vert ^{p}dx \\
&\leq &-2\left\Vert u^{+}\right\Vert _{H^{1}}^{2}+\left( 4-p\right) f_{\max
}S_{p}^{-p}\Vert u^{+}\Vert _{H^{1}}^{p} \\
&=&\left( 4-p\right) \left\Vert u^{+}\right\Vert _{H^{1}}^{2}\left( f_{\max
}S_{p}^{-p}\Vert u^{+}\Vert _{H^{1}}^{p-2}-\frac{2}{4-p}\right) <0
\end{eqnarray*}%
and%
\begin{eqnarray*}
\frac{\partial ^{2}}{\partial t^{2}}\widetilde{h}\left( 1,1\right)
&=&2\lambda \int_{\mathbb{R}^{3}}K(x)\phi _{K,u^{-}}(u^{-})^{2}dx-\left(
p-2\right) \int_{\mathbb{R}^{3}}f(x)\left\vert u^{-}\right\vert ^{p}dx \\
&=&-\left( p-2\right) \left( \left\Vert u^{-}\right\Vert
_{H^{1}}^{2}+\lambda \int_{\mathbb{R}^{3}}K(x)\phi
_{K,u^{+}}(u^{-})^{2}dx\right)  \\
&&+\left( 4-p\right) \lambda \int_{\mathbb{R}^{3}}K(x)\phi
_{K,u^{-}}(u^{-})^{2}dx \\
&=&-2\left( \left\Vert u^{-}\right\Vert _{H^{1}}^{2}+\lambda \int_{\mathbb{R}%
^{3}}K(x)\phi _{K,u^{+}}(u^{-})^{2}dx\right) +\left( 4-p\right) \int_{%
\mathbb{R}^{3}}f(x)\left\vert u^{-}\right\vert ^{p}dx \\
&\leq &-2\left\Vert u^{-}\right\Vert _{H^{1}}^{2}+\left( 4-p\right) f_{\max
}S_{p}^{-p}\Vert u^{-}\Vert _{H^{1}}^{p} \\
&=&\left( 4-p\right) \left\Vert u^{-}\right\Vert _{H^{1}}^{2}\left( f_{\max
}S_{p}^{-p}\Vert u^{-}\Vert _{H^{1}}^{p-2}-\frac{2}{4-p}\right) <0.
\end{eqnarray*}%
Furthermore, we have the following result.

\begin{lemma}
\label{h3-3}Suppose that $2<p<4$ and conditions $\left( F1\right) $ and $%
\left( K1\right) $ hold. Then there exists a positive number $\widetilde{%
\lambda }\leq \lambda _{0}$ such that for every $0<\lambda <\widetilde{%
\lambda }$ and $u\in \mathbf{N}_{\lambda }^{\left( 1\right) },$ there exist $%
\left( \frac{p}{2}\right) ^{\frac{1}{p-2}}<\widetilde{s}_{\lambda },%
\widetilde{t}_{\lambda }\leq \left( \frac{p}{4-p}\right) ^{\frac{1}{p-2}}$
such that $I_{\lambda }\left( \widetilde{s}_{\lambda }u^{+}+\widetilde{t}%
_{\lambda }u^{-}\right) <0.$ Furthermore, there holds%
\begin{equation*}
I_{\lambda }\left( u^{+}+u^{-}\right) =\sup_{\left( s,t\right) \in \left[ 0,%
\widetilde{s}_{\lambda }\right] \times \left[ 0,\widetilde{t}_{\lambda }%
\right] }I_{\lambda }(su^{+}+tu^{-}).
\end{equation*}
\end{lemma}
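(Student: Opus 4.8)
The plan is to analyze the two--variable fibering function $\widetilde h(s,t)=I_\lambda(su^++tu^-)$ directly on a suitable rectangle. Since $u^+u^-\equiv 0$ we have $(su^++tu^-)^2=s^2(u^+)^2+t^2(u^-)^2$, so Lemma~\ref{h1}$(ii)$--$(iii)$ give $\phi_{K,su^++tu^-}=s^2\phi_{K,u^+}+t^2\phi_{K,u^-}$. Writing $a_\pm=\|u^\pm\|_{H^1}^2$, $b_{\pm\pm}=\int_{\mathbb R^3}K\phi_{K,u^\pm}(u^\pm)^2dx$, $b_{+-}=\int_{\mathbb R^3}K\phi_{K,u^+}(u^-)^2dx$ and $c_\pm=\int_{\mathbb R^3}f|u^\pm|^pdx$, the definitions $(\ref{2-21})$--$(\ref{2-28})$ give
\[
\widetilde h(s,t)=\frac{s^2}{2}a_++\frac{t^2}{2}a_-+\frac{\lambda}{4}\big(s^4b_{++}+2s^2t^2b_{+-}+t^4b_{--}\big)-\frac{s^p}{p}c_+-\frac{t^p}{p}c_-,
\]
and the derivatives computed just above factor as $\partial_s\widetilde h=s\,\Lambda(s,t)$, $\partial_t\widetilde h=t\,\Gamma(s,t)$ with
\[
\Lambda(s,t)=a_++\lambda s^2b_{++}+\lambda t^2b_{+-}-s^{p-2}c_+,\qquad \Gamma(s,t)=a_-+\lambda t^2b_{--}+\lambda s^2b_{+-}-t^{p-2}c_-.
\]
From $u\in\mathbf N_\lambda^{(1)}$ I read off $\Lambda(1,1)=\Gamma(1,1)=0$, i.e. $c_\pm=a_\pm+\lambda(b_{\pm\pm}+b_{+-})\ge a_\pm$, while $(\ref{2-20})$ and Lemma~\ref{h1}$(i)$ bound $a_\pm,c_\pm$ above and below and $b_{\pm\pm},b_{+-}$ above, all \emph{uniformly} for $u\in\mathbf N_\lambda^{(1)}$.

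Set $\bar s:=\left(\frac{p}{4-p}\right)^{1/(p-2)}$; since $p>2$ one has $1<\left(\frac{p}{2}\right)^{1/(p-2)}<\bar s$, so the choice $\widetilde s_\lambda=\widetilde t_\lambda:=\bar s$ already lies in the asserted range. Using $\bar s^{p-2}=\frac{p}{4-p}$, hence $\frac{\bar s^p}{p}=\frac{\bar s^2}{4-p}$, together with $c_\pm\ge a_\pm$ and $a_\pm\ge\left(\frac{S_p^p}{f_{\max}}\right)^{2/(p-2)}=:m$, I would estimate
\[
\widetilde h(\bar s,\bar s)=\bar s^2\Big(\frac{a_++a_-}{2}-\frac{c_++c_-}{4-p}\Big)+\frac{\lambda}{4}\bar s^4\big(b_{++}+2b_{+-}+b_{--}\big)\le m\,\bar s^2\,\frac{2-p}{4-p}+\lambda C_1,
\]
with $C_1$ a uniform constant. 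The first term is a negative constant independent of $\lambda$, so $\widetilde h(\bar s,\bar s)<0$ once $\lambda$ is small; this is the first assertion.

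For the supremum I would show $\widetilde h$ attains its maximum over $R:=[0,\bar s]\times[0,\bar t]$ at $(1,1)$. First, $\partial_s\Lambda=s^{p-3}\big(2\lambda s^{4-p}b_{++}-(p-2)c_+\big)<0$ on $(0,\bar s]$ for $\lambda$ small, since the bracket is negative there (its zero sits at $s\sim\lambda^{-1/(4-p)}\to\infty$); thus $\Lambda(\cdot,t)$ is strictly decreasing, with $\Lambda(0^+,t)>0$ and $\Lambda(\bar s,t)\le m\,\frac{2(2-p)}{4-p}+\lambda C<0$. Hence $\widetilde h(\cdot,t)$ first increases and then decreases on $[0,\bar s]$, and symmetrically in $t$ through $\Gamma$. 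This rules out the boundary of $R$: near $\{s=0\}$ and $\{t=0\}$ the function strictly increases inward (there $\Lambda,\Gamma>0$), while on $\{s=\bar s\}$ and $\{t=\bar t\}$ it strictly increases as one moves inward; moreover $\widetilde h(1,1)=I_\lambda(u)>0$ because $u\in\mathbf M_\lambda^{(1)}\subset\mathbf M_\lambda^-$. Therefore the maximum is interior, i.e. a solution of $\Lambda=\Gamma=0$. Writing such a point as a fixed point $s=s_0(t_0(s))$, where $\Lambda(s_0(t),t)=0$ and $\Gamma(s,t_0(s))=0$, the relation $\Lambda(s_0,t)=0$ forces $s_0^{\,p-2}c_+\ge a_+\ge m$, so $s_0,t_0$ stay bounded away from $0$; then $|\partial_s\Lambda|,|\partial_t\Gamma|$ are bounded below while $\partial_t\Lambda=2\lambda tb_{+-}$ and $\partial_s\Gamma=2\lambda sb_{+-}$ are $O(\lambda)$, so $|s_0'|,|t_0'|=O(\lambda)$ and $s_0\circ t_0$ is a contraction. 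The interior critical point is thus unique, and as $(1,1)$ is one, it is the maximizer. Hence $I_\lambda(u^++u^-)=\widetilde h(1,1)=\sup_{R}\widetilde h$, which gives the second assertion with $\widetilde s_\lambda=\widetilde t_\lambda=\bar s$.

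The main obstacle is \emph{uniformity}: I must produce a single $\widetilde\lambda\le\lambda_0$ that works for every $u\in\mathbf N_\lambda^{(1)}$, so all the smallness claims above (strict monotonicity of $\Lambda(\cdot,t)$ and $\Gamma(s,\cdot)$ throughout $(0,\bar s]$, negativity of $\Lambda(\bar s,\cdot)$ and $\Gamma(\cdot,\bar t)$, and the contraction bound $|s_0'\,t_0'|=O(\lambda^2)<1$) must be controlled purely through the $u$--independent bounds on $a_\pm,c_\pm,b_{\pm\pm},b_{+-}$ coming from $(\ref{2-20})$ and Lemma~\ref{h1}. The subtlest point is keeping $\partial_s\Lambda$ negative while simultaneously holding $s_0,t_0$ away from $0$ on the whole rectangle, so that $|\partial_s\Lambda|^{-1}$ is controlled; this is most delicate in the regime $3<p<4$ with $t$ near $0$, where one has to weigh the vanishing order $t^{p-3}$ of the dominant term against the $O(\lambda)$ remainders.
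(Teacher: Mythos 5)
Your proposal is correct, and for the key step --- showing that $\sup_{[0,\widetilde{s}_{\lambda }]\times \lbrack 0,\widetilde{t}_{\lambda }]}\widetilde{h}$ is attained at $(1,1)$ --- it takes a genuinely different route from the paper. For the negativity claim the paper decouples the cross term $s^{2}t^{2}\int K\phi _{K,u^{-}}(u^{+})^{2}dx$ by Young's inequality, reduces to two one--variable functions $g^{\pm }$, and extracts $u$--dependent values $\widetilde{s}_{\lambda },\widetilde{t}_{\lambda }$ from the minimizers of the auxiliary functions $\widetilde{g}$; you instead take the fixed right endpoint $\bar{s}=(p/(4-p))^{1/(p-2)}$ for both parameters and get $\widetilde{h}(\bar{s},\bar{s})<0$ directly from the exact identity for $\widetilde{h}$ together with $c_{\pm }\geq a_{\pm }$ (a consequence of $u\in \mathbf{N}_{\lambda }^{(1)}$) and the uniform lower bound $(\ref{2-20})$ --- this is shorter and equally uniform in $u$. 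For the supremum, the paper checks the signs of $\partial _{s}\widetilde{h},\partial _{t}\widetilde{h}$ on $\partial Q_{\lambda }$, verifies that the Hessian at $(1,1)$ is negative definite for small $\lambda $, and then upgrades the local maximum to a global one by a compactness/contradiction argument along a sequence $\lambda _{n}\rightarrow 0$; as written that limiting step is carried out for a fixed $u$, so the uniformity of $\widetilde{\lambda }$ over $\mathbf{N}_{\lambda }^{(1)}$ is left somewhat implicit. Your argument replaces this by showing each cross--section $s\mapsto \widetilde{h}(s,t)$ is unimodal (since $\partial _{s}\Lambda <0$ on $(0,\bar{s}]$ for small $\lambda $, with $\Lambda >0$ near $0$ and $\Lambda (\bar{s},\cdot )<0$), so the maximum is an interior critical point, and then proving uniqueness of the interior critical point by the contraction estimate $|s_{0}^{\prime }|,|t_{0}^{\prime }|=O(\lambda )$ on the implicit curves $\Lambda =0$ and $\Gamma =0$; since $(1,1)$ is a critical point it must be the maximizer. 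All the constants in your estimates ($a_{\pm },c_{\pm }$ bounded above and below, $b_{\pm \pm },b_{+-}$ bounded above, $s_{0},t_{0}$ bounded away from $0$ because $s_{0}^{p-2}c_{+}\geq a_{+}\geq m$) are indeed controlled by $(\ref{2-20})$ and Lemma \ref{h1}$(i)$ uniformly in $u$, and the worry you raise about $t^{p-3}$ for $3<p<4$ is harmless precisely because $\partial _{t}\Gamma $ only needs to be bounded below at the zero $t_{0}(s)$, which stays away from $0$. The net effect is a more quantitative proof that makes the uniformity of $\widetilde{\lambda }$ over $\mathbf{N}_{\lambda }^{(1)}$ explicit, at the cost of the implicit--function/contraction bookkeeping; the paper's version is softer but leans on a qualitative limit argument.
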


\begin{proof}
By Lemma \ref{h1} $(i)$ and Young's inequality,%
\begin{equation*}
s^{2}t^{2}\int_{\mathbb{R}^{3}}K(x)\phi _{K,u^{-}}(u^{+})^{2}dx\leq \frac{%
s^{4}}{2}\overline{S}^{-2}S_{12/5}^{-4}K_{\max }^{2}\left\Vert
u^{+}\right\Vert _{H^{1}}^{4}+\frac{t^{4}}{2}\overline{S}%
^{-2}S_{12/5}^{-4}K_{\max }^{2}\left\Vert u^{-}\right\Vert _{H^{1}}^{4}.
\end{equation*}%
Using the above inequality, together with $(\ref{2-21})-(\ref{2-28})$ leads
to%
\begin{eqnarray*}
I_{\lambda }\left( su^{+}+tu^{-}\right)  &=&\widetilde{h}\left( s,t\right)
\\
&\leq &\frac{s^{2}}{2}\left\Vert u^{+}\right\Vert _{H^{1}}^{2}+\frac{\lambda
s^{4}}{4}\int_{\mathbb{R}^{3}}K(x)\phi _{K,u^{+}}(u^{+})^{2}dx-\frac{s^{p}}{p%
}\int_{\mathbb{R}^{3}}f(x)\left\vert u^{+}\right\vert ^{p}dx \\
&&+\frac{t^{2}}{2}\left\Vert u^{-}\right\Vert _{H^{1}}^{2}+\frac{\lambda
t^{4}}{4}\int_{\mathbb{R}^{3}}K(x)\phi _{K,u^{-}}(u^{-})^{2}dx-\frac{t^{p}}{p%
}\int_{\mathbb{R}^{3}}f(x)\left\vert u^{-}\right\vert ^{p}dx \\
&&+\frac{\lambda s^{4}}{4}\overline{S}^{-2}S_{12/5}^{-4}K_{\max
}^{2}\left\Vert u^{+}\right\Vert _{H^{1}}^{4}+\frac{\lambda t^{4}}{4}%
\overline{S}^{-2}S_{12/5}^{-4}K_{\max }^{2}\left\Vert u^{-}\right\Vert
_{H^{1}}^{4} \\
&\leq &g^{+}(s)+g^{-}(t),
\end{eqnarray*}%
where%
\begin{equation*}
g^{+}\left( s\right) =\frac{s^{2}}{2}\left\Vert u^{+}\right\Vert
_{H^{1}}^{2}+\frac{\lambda s^{4}}{2}\overline{S}^{-2}S_{12/5}^{-4}K_{\max
}^{2}\left\Vert u^{+}\right\Vert _{H^{1}}^{4}-\frac{s^{p}}{p}\int_{\mathbb{R}%
^{3}}f(x)\left\vert u^{+}\right\vert ^{p}dx
\end{equation*}%
and%
\begin{equation*}
g^{-}\left( t\right) =\frac{t^{2}}{2}\left\Vert u^{-}\right\Vert
_{H^{1}}^{2}+\frac{\lambda t^{4}}{2}\overline{S}^{-2}S_{12/5}^{-4}K_{\max
}^{2}\left\Vert u^{-}\right\Vert _{H^{1}}^{4}-\frac{t^{p}}{p}\int_{\mathbb{R}%
^{3}}f(x)\left\vert u^{-}\right\vert ^{p}dx.
\end{equation*}%
In order to arrive at the conclusion, we only need to show that there exist $%
\widetilde{s}_{\lambda },\widetilde{t}_{\lambda }>0$ such that $g^{+}\left(
\widetilde{s}_{\lambda }\right) ,g^{-}\left( \widetilde{t}_{\lambda }\right)
<0.$

Let
\begin{equation*}
\widetilde{g}(t)=\frac{t^{-2}}{2}\left\Vert u^{-}\right\Vert _{H^{1}}^{2}-%
\frac{t^{p-4}}{p}\int_{\mathbb{R}^{3}}f(x)\left\vert u^{-}\right\vert ^{p}dx%
\text{ for }t>0.
\end{equation*}%
A straightforward calculation gives
\begin{equation*}
\widetilde{g}(t_{\lambda })=0,\ \lim_{t\rightarrow 0^{+}}\widetilde{g}%
(t)=\infty \ \text{and}\ \lim_{t\rightarrow \infty }\widetilde{g}(t)=0,
\end{equation*}%
where
\begin{equation*}
t_{\lambda }:=\left( \frac{p\left\Vert u^{-}\right\Vert _{H^{1}}^{2}}{2\int_{%
\mathbb{R}^{3}}f(x)\left\vert u^{-}\right\vert ^{p}dx}\right) ^{\frac{1}{p-2}%
}.
\end{equation*}%
By the fact of $\frac{\partial }{\partial t}\widetilde{h}\left( 1,1\right) =0
$ and $(\ref{2-20})$ one has%
\begin{equation}
\left( \frac{p(4-p)}{4}\right) ^{\frac{1}{p-2}}<t_{\lambda }\leq \left(
\frac{p}{2}\right) ^{\frac{1}{p-2}}.  \label{2-17}
\end{equation}%
By calculating the derivative of $\widetilde{g}(t),$ we find%
\begin{eqnarray*}
\widetilde{g}^{\prime }\left( t\right)  &=&-t^{-3}\left\Vert
u^{-}\right\Vert _{H^{1}}^{2}+\frac{\left( 4-p\right) t^{p-5}}{p}\int_{%
\mathbb{R}^{3}}f(x)\left\vert u^{-}\right\vert ^{p}dx \\
&=&t^{-3}\left[ -\left\Vert u^{-}\right\Vert _{H^{1}}^{2}+\frac{\left(
4-p\right) t^{p-2}}{p}\int_{\mathbb{R}^{3}}f(x)\left\vert u^{-}\right\vert
^{p}dx\right] ,
\end{eqnarray*}%
which indicates that there exists $\widetilde{t}_{\lambda }=\left( \frac{2}{%
4-p}\right) ^{\frac{1}{p-2}}t_{\lambda }$ such that $\widetilde{g}\left(
t\right) $ is decreasing when $0<t<\widetilde{t}_{\lambda }$ and is
increasing when $t>\widetilde{t}_{\lambda }.$ Moreover, using $\left( \ref%
{2-17}\right) $ gives
\begin{equation}
1<\left( \frac{p}{2}\right) ^{\frac{1}{p-2}}<\widetilde{t}_{\lambda }\leq
\left( \frac{p}{4-p}\right) ^{\frac{1}{p-2}}.  \label{2-18}
\end{equation}%
Thus, by virtue of $\left( \ref{2-20}\right) $ and $(\ref{2-18}),$ we have%
\begin{eqnarray}
\inf_{t>0}\widetilde{g}\left( t\right)  &=&\widetilde{g}\left( \widetilde{t}%
_{\lambda }\right)   \notag \\
&=&-\frac{p-2}{2(4-p)}\left[ \frac{p\left\Vert u^{-}\right\Vert _{H^{1}}^{2}%
}{(4-p)\int_{\mathbb{R}^{3}}f(x)\left\vert u^{-}\right\vert ^{p}dx}\right]
^{-\frac{2}{p-2}}\left\Vert u^{-}\right\Vert _{H^{1}}^{2}  \notag \\
&\leq &-\frac{p-2}{2(4-p)}\left( \frac{p}{4-p}\right) ^{-\frac{2}{p-2}%
}\left\Vert u^{-}\right\Vert _{H^{1}}^{2}  \notag \\
&\leq &-\frac{p-2}{2(4-p)}\left( \frac{S_{p}^{p}(4-p)}{pf_{\max }}\right) ^{%
\frac{2}{p-2}},  \notag
\end{eqnarray}%
which implies that there exists a positive constant $\lambda _{1}\leq
\lambda _{0}$ such that for every $0<\lambda <\lambda _{1},$%
\begin{equation*}
\inf_{t>0}\widetilde{g}\left( t\right) =\widetilde{g}\left( \widetilde{t}%
_{\lambda }\right) <-\frac{\lambda }{2}\overline{S}^{-2}S_{12/5}^{-4}K_{\max
}^{2}\left\Vert u^{-}\right\Vert _{H^{1}}^{4}.
\end{equation*}%
This indicates that
\begin{eqnarray}
g^{-}\left( \widetilde{t}_{\lambda }\right)  &=&\frac{\widetilde{t}_{\lambda
}^{2}}{2}\left\Vert u^{-}\right\Vert _{H^{1}}^{2}+\frac{\lambda \widetilde{t}%
_{\lambda }^{4}}{2}\overline{S}^{-2}S_{12/5}^{-4}K_{\max }^{2}\left\Vert
u^{-}\right\Vert _{H^{1}}^{4}-\frac{\widetilde{t}_{\lambda }^{p}}{p}\int_{%
\mathbb{R}^{3}}f(x)\left\vert u^{-}\right\vert ^{p}dx  \notag \\
&=&\widetilde{t}_{\lambda }^{4}\left( \widetilde{g}\left( \widetilde{t}%
_{\lambda }\right) +\frac{\lambda }{2}\overline{S}^{-2}S_{12/5}^{-4}K_{\max
}^{2}\left\Vert u^{-}\right\Vert _{H^{1}}^{4}\right) <0.  \label{2-12}
\end{eqnarray}%
Similarly, we also obtain that there exists%
\begin{equation*}
1<\left( \frac{p}{2}\right) ^{\frac{1}{p-2}}<\widetilde{s}_{\lambda }\leq
\left( \frac{p}{4-p}\right) ^{\frac{1}{p-2}}
\end{equation*}%
such that
\begin{equation}
g^{+}\left( \widetilde{s}_{\lambda }\right) =\frac{\widetilde{s}_{\lambda
}^{2}}{2}\left\Vert u^{+}\right\Vert _{H^{1}}^{2}+\frac{\lambda \widetilde{s}%
_{\lambda }^{4}}{2}\overline{S}^{-2}S_{12/5}^{-4}K_{\max }^{2}\left\Vert
u^{+}\right\Vert _{H^{1}}^{4}-\frac{\widetilde{s}_{\lambda }^{p}}{p}\int_{%
\mathbb{R}^{3}}f(x)\left\vert u^{+}\right\vert ^{p}dx<0.  \label{2-24}
\end{equation}%
Thus, by $(\ref{2-12})$ and $(\ref{2-24}),$ for every $u\in \mathbf{N}%
_{\lambda }^{\left( 1\right) }$ there holds%
\begin{equation*}
I_{\lambda }\left( \widetilde{s}_{\lambda }u^{+}+\widetilde{t}_{\lambda
}u^{-}\right) =\widetilde{h}\left( \widetilde{s}_{\lambda },\widetilde{t}%
_{\lambda }\right) <0\text{ for all }0<\lambda <\lambda _{1}.
\end{equation*}%
Next, we show that%
\begin{equation*}
\sup_{\left( s,t\right) \in \left[ 0,\widetilde{s}_{\lambda }\right] \times %
\left[ 0,\widetilde{t}_{\lambda }\right] }I_{\lambda }\left(
su^{+}+tu^{-}\right) =I_{\lambda }(u^{+}+u^{-}).
\end{equation*}%
Set $Q_{\lambda }=\left[ 0,\widetilde{s}_{\lambda }\right] \times \left[ 0,%
\widetilde{t}_{\lambda }\right] .$ First, we claim that
\begin{equation*}
\sup_{\left( s,t\right) \in \partial Q_{\lambda }}I_{\lambda }\left(
su^{+}+tu^{-}\right) <\sup_{\left( s,t\right) \in Q_{\lambda }}I_{\lambda
}(su^{+}+tu^{-}).
\end{equation*}%
Let us define%
\begin{eqnarray*}
A_{1} &=&\left\Vert u^{+}\right\Vert _{H^{1}}^{2},A_{2}=\int_{\mathbb{R}%
^{3}}K(x)\phi _{K,u^{+}}(u^{+})^{2}dx,A_{3}=\int_{\mathbb{R}%
^{3}}f(x)\left\vert u^{+}\right\vert ^{p}dx, \\
B_{1} &=&\left\Vert u^{-}\right\Vert _{H^{1}}^{2},B_{2}=\int_{\mathbb{R}%
^{3}}K(x)\phi _{K,u^{-}}(u^{-})^{2}dx,B_{3}=\int_{\mathbb{R}%
^{3}}f(x)\left\vert u^{-}\right\vert ^{p}dx
\end{eqnarray*}%
and%
\begin{equation*}
C=\int_{\mathbb{R}^{3}}K(x)\phi _{K,u^{-}}(u^{+})^{2}dx=\int_{\mathbb{R}%
^{3}}K(x)\phi _{K,u^{+}}(u^{-})^{2}dx.
\end{equation*}%
Then%
\begin{equation*}
\widetilde{h}\left( s,t\right) =\frac{s^{2}}{2}A_{1}+\frac{t^{2}}{2}%
B_{1}+\lambda \frac{s^{4}}{4}A_{2}+\lambda \frac{t^{4}}{4}B_{2}+\lambda
\frac{s^{2}t^{2}}{2}C-\frac{s^{p}}{p}A_{3}-\frac{t^{p}}{p}B_{3}.
\end{equation*}%
Clearly, there holds%
\begin{eqnarray*}
\frac{\partial }{\partial s}\widetilde{h}\left( s,t\right)  &=&s\left(
A_{1}+\lambda s^{2}A_{2}+\lambda t^{2}C-s^{p-2}A_{3}\right) ; \\
\frac{\partial }{\partial t}\widetilde{h}\left( s,t\right)  &=&t\left(
B_{1}+\lambda t^{2}B_{2}+\lambda s^{2}C-t^{p-2}B_{3}\right) .
\end{eqnarray*}%
It is not difficult to obtain that there exist $s_{0},t_{0}>0$ sufficiently
small such that%
\begin{equation}
\frac{\partial }{\partial s}\widetilde{h}\left( s,t\right) >0\text{ for all }%
\left( s,t\right) \in \left( 0,s_{0}\right) \times \left[ 0,\widetilde{t}%
_{\lambda }\right]   \label{3-2}
\end{equation}%
and
\begin{equation}
\frac{\partial }{\partial t}\widetilde{h}\left( s,t\right) >0\text{ for all }%
\left( s,t\right) \in \left[ 0,\widetilde{s}_{\lambda }\right] \times
(0,t_{0}).  \label{3-3}
\end{equation}%
Note that $A_{1}<A_{3},B_{1}<B_{3}$ and $\widetilde{s}_{\lambda },\widetilde{%
t}_{\lambda }>\left( \frac{p}{2}\right) ^{\frac{1}{p-2}}>1.$ Then there
exists a positive constant $\lambda _{2}\leq \lambda _{1}$ such that for
every $0<\lambda <\lambda _{2},$%
\begin{equation}
\frac{\partial }{\partial s}\widetilde{h}\left( s,t\right) <0\text{ for all }%
\left( s,t\right) \in \left\{ \widetilde{s}_{\lambda }\right\} \times \left[
0,\widetilde{t}_{\lambda }\right]   \label{3-4}
\end{equation}%
and
\begin{equation}
\frac{\partial }{\partial t}\widetilde{h}\left( s,t\right) <0\text{ for all }%
\left( s,t\right) \in \left[ 0,\widetilde{s}_{\lambda }\right] \times
\left\{ \widetilde{t}_{\lambda }\right\} .  \label{3-5}
\end{equation}%
By $(\ref{3-2})-(\ref{3-5}),$ we can conclude that%
\begin{equation*}
\sup_{\left( s,t\right) \in \partial Q_{\lambda }}I_{\lambda
}(su^{+}+tu^{-})<\sup_{\left( s,t\right) \in Q_{\lambda }}I_{\lambda
}(su^{+}+tu^{-}).
\end{equation*}%
Second, we prove that $I_{\lambda }\left( u^{+}+u^{-}\right) =\sup_{\left(
s,t\right) \in Q_{\lambda }}I_{\lambda }(su^{+}+tu^{-}).$ Since $\frac{%
\partial }{\partial s}\widetilde{h}\left( 1,1\right) =\frac{\partial }{%
\partial t}\widetilde{h}\left( 1,1\right) =0,$ we have $\left( 1,1\right) $
is a critical point of $\widetilde{h}\left( s,t\right) $ for all $\lambda >0.
$ By a calculation, we deduce that%
\begin{eqnarray*}
\frac{\partial ^{2}}{\partial s^{2}}\widetilde{h}\left( s,t\right)
&=&A_{1}+3\lambda s^{2}A_{2}+\lambda t^{2}C-\left( p-1\right) s^{p-2}A_{3};
\\
\frac{\partial ^{2}}{\partial t^{2}}\widetilde{h}\left( s,t\right)
&=&B_{1}+3\lambda t^{2}B_{2}+\lambda s^{2}C-\left( p-1\right) t^{p-2}B_{3};
\\
\frac{\partial ^{2}}{\partial s\partial t}\widetilde{h}\left( s,t\right)
&=&2\lambda stC.
\end{eqnarray*}%
Then the Hessian matric of $\widetilde{h}$ at $\left( 1,1\right) $ is%
\begin{eqnarray*}
H_{\lambda } &=&\left[
\begin{array}{cc}
A_{1}+3\lambda A_{2}+\lambda C-\left( p-1\right) A_{3} & 2\lambda C \\
2\lambda C & B_{1}+3\lambda B_{2}+\lambda C-\left( p-1\right) B_{3}%
\end{array}%
\right]  \\
&=&\left[
\begin{array}{cc}
A_{1}-\left( p-1\right) A_{3} & 0 \\
0 & B_{1}-\left( p-1\right) B_{3}%
\end{array}%
\right] +\lambda \left[
\begin{array}{cc}
3A_{2}+C & 2C \\
2C & 3B_{2}+C%
\end{array}%
\right]
\end{eqnarray*}%
for all $\lambda >0.$ We notice that the matrix%
\begin{equation*}
-\left[
\begin{array}{cc}
A_{1}-\left( p-1\right) A_{3} & 0 \\
0 & B_{1}-\left( p-1\right) B_{3}%
\end{array}%
\right]
\end{equation*}%
is positive definite, since $0<A_{1}<A_{3},0<B_{1}<B_{3}$ and $2<p<4.$ Using
this, together with the fact that $A_{2},B_{2},C$ are uniformly bounded for
all $\lambda >0$, we get $-H_{\lambda }$ is positive definite for $\lambda >0
$ sufficiently small. This implies that there exists $r_{0}>0$ sufficiently
small, independent of $\lambda $ such that $\widetilde{h}\left( 1,1\right) $
is a unique global maximum point on
\begin{equation*}
B_{r_{0}}\left( \left( 1,1\right) \right) =\left\{ \left( s,t\right) :s,t>0%
\text{ and }\left\vert \left( s,t\right) -\left( 1,1\right) \right\vert
<r_{0}\right\} \subset Q_{\lambda }.
\end{equation*}%
Next, we show that $\widetilde{h}\left( 1,1\right) $ is a unique global
maximum on $Q_{\lambda }$ for $\lambda >0$ sufficiently small$.$ If not,
there exist a sequence $\left\{ \lambda _{n}\right\} \subset \mathbb{R}^{+}$
with $\lambda _{n}\rightarrow 0$ as $n\rightarrow \infty $ and points $%
\left( s_{\lambda _{n}},t_{\lambda _{n}}\right) \in Q_{\lambda
_{n}}\backslash B_{r_{0}}\left( \left( 1,1\right) \right) $ such that
\begin{equation*}
\widetilde{h}\left( s_{\lambda _{n}},t_{\lambda _{n}}\right) =\sup_{\left(
s,t\right) \in Q_{\lambda _{n}}\backslash B_{r_{0}}\left( \left( 1,1\right)
\right) }\widetilde{h}(s,t).
\end{equation*}%
Since
\begin{equation*}
Q_{\lambda _{n}}\subset \left[ 0,\left( \frac{p}{4-p}\right) ^{\frac{1}{p-2}}%
\right] \times \left[ 0,\left( \frac{p}{4-p}\right) ^{\frac{1}{p-2}}\right] ,
\end{equation*}%
we have $\left\{ \left( s_{\lambda _{n}},t_{\lambda _{n}}\right) \right\} $
is a bounded sequence. Then there exist a subsequence $\left\{ \left(
s_{\lambda _{n}},t_{\lambda _{n}}\right) \right\} $ and
\begin{equation*}
\left( s_{0},t_{0}\right) \in \left[ 0,\left( \frac{p}{4-p}\right) ^{\frac{1%
}{p-2}}\right] \times \left[ 0,\left( \frac{p}{4-p}\right) ^{\frac{1}{p-2}}%
\right] \backslash B_{r_{0}}\left( \left( 1,1\right) \right)
\end{equation*}%
such that
\begin{equation*}
\left( s_{\lambda _{n}},t_{\lambda _{n}}\right) \rightarrow \left(
s_{0},t_{0}\right) \text{ as }n\rightarrow \infty
\end{equation*}%
and%
\begin{equation*}
\widetilde{h}\left( s_{0},t_{0}\right) \geq \widetilde{h}(1,1),
\end{equation*}%
which contradicts to the fact that $\left( 1,1\right) $ is a unique global
maximum point of $\widetilde{h}$ for $\lambda =0.$ Therefore, there exists a
positive constant $\widetilde{\lambda }\leq \lambda _{2}$ such that for
every $0<\lambda <\widetilde{\lambda },$
\begin{equation*}
I_{\lambda }\left( u^{+}+u^{-}\right) =\sup_{\left( s,t\right) \in \left[ 0,%
\widetilde{s}_{\lambda }\right] \times \left[ 0,\widetilde{t}_{\lambda }%
\right] }I_{\lambda }\left( su^{+}+tu^{-}\right) .
\end{equation*}%
This completes the proof.
\end{proof}

Similar to the argument in Lemma \ref{h3-3}, we obtain that for every $%
0<\lambda <\widetilde{\lambda }$ and $u\in \mathbf{N}_{\lambda }^{\left(
1\right) }$ there exist $\left( \frac{p}{2}\right) ^{\frac{1}{p-2}%
}<s_{\lambda }^{+},t_{\lambda }^{-}\leq \left( \frac{p}{4-p}\right) ^{\frac{1%
}{p-2}}$ such that $J_{\lambda }^{+}\left( s_{\lambda
}^{+}u^{+},u^{-}\right) <0,J_{\lambda }^{-}\left( u^{+},t_{\lambda
}^{-}u^{-}\right) <0$ and%
\begin{equation*}
J_{\lambda }^{+}\left( u^{+},u^{-}\right) =\sup_{s\in \left[ 0,s_{\lambda
}^{+}\right] }J_{\lambda }^{+}\left( su^{+},u^{-}\right) ;\text{ }J_{\lambda
}^{-}\left( u^{+},u^{-}\right) =\sup_{t\in \left[ 0,t_{\lambda }^{-}\right]
}J_{\lambda }^{-}(u^{+},tu^{-}).
\end{equation*}%
Furthermore, we have the following result.

\begin{lemma}
\label{h3-5}Suppose that $2<p<4,$ and conditions $\left( F1\right) $ and $%
\left( K1\right) $ hold. Let $\widetilde{\lambda }>0$ be as in Lemma \ref%
{h3-3}. Then for every $0<\lambda <\widetilde{\lambda }$ and $u\in \mathbf{N}%
_{\lambda }^{\left( 1\right) }$ there exist $0<s_{0}^{+},t_{0}^{-}\leq 1$
such that $s_{0}^{+}u^{+},t_{0}^{-}u^{-}\in \mathbf{M}_{\lambda }^{-}$ and%
\begin{equation*}
\sup_{s\in \left[ 0,s_{\lambda }^{+}\right] }I_{\lambda }\left(
su^{+}\right) =I_{\lambda }\left( s_{0}^{+}u^{+}\right) \geq \alpha
_{\lambda }^{-};\text{ }\sup_{t\in \left[ 0,t_{\lambda }^{-}\right]
}I_{\lambda }\left( tu^{-}\right) =I_{\lambda }\left( t_{0}^{-}u^{-}\right)
\geq \alpha _{\lambda }^{-},
\end{equation*}%
where $\alpha _{\lambda }^{-}=\inf_{u\in \mathbf{M}_{\lambda
}^{-}}I_{\lambda }(u).$ In particular,
\begin{equation*}
J_{\lambda }^{\pm }\left( u^{+},u^{-}\right) \geq \alpha _{\lambda }^{-}%
\text{ for all }u\in \mathbf{N}_{\lambda }^{\left( 1\right) }.
\end{equation*}
\end{lemma}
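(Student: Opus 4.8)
Fix $0<\lambda<\widetilde{\lambda}$ and $u\in\mathbf{N}_{\lambda}^{(1)}$. The target is to produce scalars $0<s_{0}^{+},t_{0}^{-}\le 1$ realizing the suprema of $s\mapsto I_{\lambda}(su^{+})$ and $t\mapsto I_{\lambda}(tu^{-})$ over the compact intervals $[0,s_{\lambda}^{+}]$ and $[0,t_{\lambda}^{-}]$, with the maximizers sitting on $\mathbf{M}_{\lambda}^{-}$ and dominated below by $\alpha_{\lambda}^{-}$. I would treat the $u^{+}$ factor first and obtain the $u^{-}$ factor by the identical argument. Consider the single-variable fibering map $h_{u^{+}}(s)=I_{\lambda}(su^{+})$. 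Since $u^{+}\in H^{1}(\mathbb{R}^{3})\setminus\{0\}$, this map is continuous on the compact interval $[0,s_{\lambda}^{+}]$, so it attains its supremum at some $s_{0}^{+}$; the substance is to locate $s_{0}^{+}$ in the interior and identify it with an $\mathbf{M}_{\lambda}^{-}$ point.

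\textbf{Key steps.} First I would record the endpoint behaviour. At $s=0$ we have $h_{u^{+}}(0)=0$, and $h_{u^{+}}'(s)=s\left(\|u^{+}\|_{H^{1}}^{2}+\lambda s^{2}\int_{\mathbb{R}^{3}}K(x)\phi_{K,u^{+}}(u^{+})^{2}dx-s^{p-2}\int_{\mathbb{R}^{3}}f(x)|u^{+}|^{p}dx\right)>0$ for $s$ small, so the origin is not the maximizer. At the right endpoint I invoke the analogue of $(\ref{2-24})$: from the preceding paragraph $J_{\lambda}^{+}(s_{\lambda}^{+}u^{+},u^{-})<0$ together with $g^{+}(s_{\lambda}^{+})<0$ forces $h_{u^{+}}(s_{\lambda}^{+})=I_{\lambda}(s_{\lambda}^{+}u^{+})<0=h_{u^{+}}(0)$, so the supremum exceeds the value at both endpoints and hence is attained at an interior critical point $s_{0}^{+}\in(0,s_{\lambda}^{+})$. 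Second, at an interior maximum $h_{u^{+}}'(s_{0}^{+})=0$, which by the fibering identity means exactly $s_{0}^{+}u^{+}\in\mathbf{M}_{\lambda}$; and since it is a maximum of the fibering map, $h_{u^{+}}''(s_{0}^{+})\le 0$, placing $s_{0}^{+}u^{+}\in\mathbf{M}_{\lambda}^{-}\cup\mathbf{M}_{\lambda}^{0}$. I would rule out $\mathbf{M}_{\lambda}^{0}$ using the estimate computed just above the lemma statement, namely that $h_{w}''(1)=(4-p)\|w\|_{H^{1}}^{2}\left(f_{\max}S_{p}^{-p}\|w\|_{H^{1}}^{p-2}-\tfrac{2}{4-p}\right)$ combined with the upper bound $\|u^{+}\|_{H^{1}}<\left(\tfrac{2S_{p}^{p}}{(4-p)f_{\max}}\right)^{1/(p-2)}$ from $(\ref{2-20})$, which keeps the strict sign and gives $s_{0}^{+}u^{+}\in\mathbf{M}_{\lambda}^{-}$.

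\textbf{From the manifold to the bounds.} Once $s_{0}^{+}u^{+}\in\mathbf{M}_{\lambda}^{-}$, the chain of inequalities established right before this lemma shows $I_{\lambda}$ is bounded below on $\mathbf{M}_{\lambda}^{-}$ by $\alpha_{\lambda}^{-}=\inf_{v\in\mathbf{M}_{\lambda}^{-}}I_{\lambda}(v)$, so immediately $\sup_{s\in[0,s_{\lambda}^{+}]}I_{\lambda}(su^{+})=I_{\lambda}(s_{0}^{+}u^{+})\ge\alpha_{\lambda}^{-}$. To conclude $s_{0}^{+}\le 1$, I would compare with the one-variable restriction of $\widetilde{h}$: the partial map $s\mapsto\widetilde{h}(s,1)$ has, by $\tfrac{\partial}{\partial s}\widetilde{h}(1,1)=0$ and the strict concavity in $s$ near $s=1$ recorded by $\tfrac{\partial^{2}}{\partial s^{2}}\widetilde{h}(1,1)<0$, a maximum at $s=1$; since the dominating envelope $g^{+}(s)$ peaks no earlier than the true fibering map, the interior maximizer of $h_{u^{+}}$ cannot overshoot $s=1$, giving $s_{0}^{+}\le 1$. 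The symmetric argument yields $t_{0}^{-}\le 1$ and $I_{\lambda}(t_{0}^{-}u^{-})\ge\alpha_{\lambda}^{-}$. Finally, for the last assertion, I note $J_{\lambda}^{+}(u^{+},u^{-})=\sup_{s\in[0,s_{\lambda}^{+}]}J_{\lambda}^{+}(su^{+},u^{-})\ge J_{\lambda}^{+}(s_{0}^{+}u^{+},u^{-})$, and since the cross term $\tfrac{\lambda}{4}\int_{\mathbb{R}^{3}}K(x)\phi_{K,u^{-}}(s_{0}^{+}u^{+})^{2}dx\ge 0$ by Lemma \ref{h1}$(ii)$, this quantity dominates $I_{\lambda}(s_{0}^{+}u^{+})\ge\alpha_{\lambda}^{-}$, and likewise for $J_{\lambda}^{-}$.

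\textbf{Main obstacle.} The delicate point is the comparison that forces $s_{0}^{+}\le 1$: the envelope $g^{+}$ introduced in Lemma \ref{h3-3} only majorizes $I_{\lambda}(su^{+}+tu^{-})$ and does not coincide with the genuine fibering map $h_{u^{+}}(s)=I_{\lambda}(su^{+})$, because the cross term $\tfrac{\lambda}{2}s^{2}t^{2}C$ was split via Young's inequality. So I must argue directly on $h_{u^{+}}$, controlling the sign of $h_{u^{+}}'$ on $(1,s_{\lambda}^{+}]$ rather than importing the estimate for $g^{+}$; the leverage is that $u^{+}\in\mathbf{N}_{\lambda}^{(1)}$ already encodes $\tfrac{\partial}{\partial s}\widetilde{h}(1,1)=0$ together with the sharp norm bound $(\ref{2-20})$, which together pin the true critical point at or below $s=1$ once $\lambda$ is taken small enough that the Poisson contribution is negligible.
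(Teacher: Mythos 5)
Your overall skeleton coincides with the paper's: restrict to the one-variable fibering map $\widehat h_{u^+}(s)=I_\lambda(su^+)$, use $I_\lambda(s_\lambda^+u^+)\le J_\lambda^+(s_\lambda^+u^+,u^-)<0$ to force an interior maximizer $s_0^+$, show $s_0^+u^+\in\mathbf M_\lambda^-$ with $s_0^+\le1$, and conclude via $J_\lambda^+(u^+,u^-)\ge I_\lambda(s_0^+u^+)\ge\alpha_\lambda^-$. However, two of your steps do not close as written, and they compound. First, your exclusion of $\mathbf M_\lambda^0$ applies the estimate $(\ref{2-2})$ at $w=s_0^+u^+$ but feeds in the bound $\|u^+\|_{H^1}<\bigl(\tfrac{2S_p^p}{(4-p)f_{\max}}\bigr)^{1/(p-2)}$ from $(\ref{2-20})$; the quantity actually needed is $\|s_0^+u^+\|_{H^1}=s_0^+\|u^+\|_{H^1}$, and at that stage you have not yet shown $s_0^+\le1$. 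A priori $s_0^+$ may be as large as $s_\lambda^+\le(\tfrac{p}{4-p})^{1/(p-2)}$, for which the right-hand side of $(\ref{2-2})$ need not be negative, so this step is circular. Second, your route to $s_0^+\le1$ --- ``the dominating envelope $g^+$ peaks no earlier than the true fibering map'' --- is a comparison in the wrong direction: $g^+$ only majorizes, and the location of its maximum says nothing about the location of the maximum of $s\mapsto I_\lambda(su^+)$; the fallback ``take $\lambda$ small so the Poisson contribution is negligible'' is neither needed nor a proof.

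The repair is the computation you gesture at in your final paragraph but do not carry out, and it is exactly the paper's argument. Write $\widehat h_{u^+}'(s)=s^3\bigl(\widehat g_{u^+}(s)+\lambda\int_{\mathbb R^3}K(x)\phi_{K,u^+}(u^+)^2dx\bigr)$ with $\widehat g_{u^+}(s)=s^{-2}\|u^+\|_{H^1}^2-s^{p-4}\int_{\mathbb R^3}f(x)|u^+|^pdx$; the bounds $(\ref{2-20})$ and the smallness of $\lambda$ give $\inf_{s>0}\widehat g_{u^+}<-\lambda\int_{\mathbb R^3}K(x)\phi_{K,u^+}(u^+)^2dx$, so $\widehat h_{u^+}'$ is positive on $(0,s_0^+)$, negative on $(s_0^+,\widehat s_0^+)$ and positive thereafter, with $s_0^+$ lying strictly inside the region where $\widehat g_{u^+}$ is strictly decreasing; hence $\widehat h_{u^+}''(s_0^+)<0$, i.e. $s_0^+u^+\in\mathbf M_\lambda^-$, with no appeal to a norm bound at the rescaled point. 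Then, since $u\in\mathbf N_\lambda^{(1)}$, the identity $\langle I_\lambda'(u),u^+\rangle=0$ yields $\widehat h_{u^+}'(1)=-\lambda\int_{\mathbb R^3}K(x)\phi_{K,u^-}(u^+)^2dx\le0$, which together with the sign pattern of $\widehat h_{u^+}'$ forces $s_0^+\le1$ exactly, with no smallness of the Poisson term required. With these two steps done in this order, the remainder of your argument (the endpoint comparison, $I_\lambda(s_0^+u^+)\ge\alpha_\lambda^-$, and the cross-term inequality giving $J_\lambda^\pm(u^+,u^-)\ge\alpha_\lambda^-$) is correct and identical to the paper's.
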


\begin{proof}
We only prove the case of $"+",$ since the case of $"-"$ is analogous. Let
\begin{eqnarray*}
\widehat{h}_{u^{+}}\left( s\right) &=&I_{\lambda }\left( su^{+}\right) \\
&=&\frac{1}{2}\left\Vert su^{+}\right\Vert _{H^{1}}^{2}+\frac{\lambda }{4}%
\int_{\mathbb{R}^{3}}K(x)\phi _{K,su^{+}}(su^{+})^{2}dx-\frac{1}{p}\int_{%
\mathbb{R}^{3}}f(x)|su^{+}|^{p}dx
\end{eqnarray*}%
for $s>0.$ Clearly,%
\begin{eqnarray*}
\widehat{h}_{u^{+}}^{\prime }\left( s\right) &=&s\left\Vert u^{+}\right\Vert
_{H^{1}}^{2}+\lambda s^{3}\int_{\mathbb{R}^{3}}K(x)\phi
_{K,u^{+}}(u^{+})^{2}dx-s^{p-1}\int_{\mathbb{R}^{3}}f(x)|u^{+}|^{p}dx \\
&=&s^{3}\left( \widehat{g}_{u^{+}}\left( s\right) +\lambda \int_{\mathbb{R}%
^{3}}K(x)\phi _{K,u^{+}}(u^{+})^{2}dx\right) ,
\end{eqnarray*}%
where%
\begin{equation*}
\widehat{g}_{u^{+}}\left( s\right) =s^{-2}\left\Vert u^{+}\right\Vert
_{H^{1}}^{2}-s^{p-4}\int_{\mathbb{R}^{3}}f(x)|u^{+}|^{p}dx.
\end{equation*}%
Analyzing the functions $\widehat{g}_{u^{+}}$ leads to
\begin{equation*}
\widehat{g}_{u^{+}}\left( \widehat{s}\right) =0,\ \lim_{s\rightarrow 0^{+}}%
\widehat{g}_{u^{+}}(s)=\infty \ \text{and}\ \lim_{s\rightarrow \infty }%
\widehat{g}_{u^{+}}(s)=0,
\end{equation*}%
where%
\begin{equation*}
\left( \frac{4-p}{2}\right) ^{\frac{1}{p-2}}<\widehat{s}:=\left( \frac{%
\left\Vert u^{+}\right\Vert _{H^{1}}^{2}}{\int_{\mathbb{R}%
^{3}}f(x)|u^{+}|^{p}dx}\right) ^{\frac{1}{p-2}}\leq 1.
\end{equation*}%
Moreover, the derivative of $\widehat{g}_{u^{+}}\left( s\right) \ $is the
following%
\begin{eqnarray*}
\widehat{g}_{u^{+}}^{\prime }\left( s\right) &=&-2s^{-3}\left\Vert
u^{+}\right\Vert _{H^{1}}^{2}+\left( 4-p\right) s^{p-5}\int_{\mathbb{R}%
^{3}}f(x)|u^{+}|^{p}dx \\
&=&s^{-3}\left( -2\left\Vert u^{+}\right\Vert _{H^{1}}^{2}+\left( 4-p\right)
s^{p-2}\int_{\mathbb{R}^{3}}f(x)|u^{+}|^{p}dx\right) ,
\end{eqnarray*}%
which indicates that $\widehat{g}_{u^{+}}\left( s\right) $ is decreasing
when $0<s<\left( \frac{2}{4-p}\right) ^{\frac{1}{p-2}}\widehat{s}$ and is
increasing when $s>\left( \frac{2}{4-p}\right) ^{\frac{1}{p-2}}\widehat{s}$
and
\begin{eqnarray*}
\inf_{s>0}\widehat{g}_{u^{+}}\left( s\right) &=&\widehat{g}_{u^{+}}\left(
\left( \frac{2}{4-p}\right) ^{\frac{1}{p-2}}\widehat{s}\right) \\
&=&\frac{2-p}{4-p}\left( \frac{\left\Vert u^{+}\right\Vert _{H^{1}}^{2}}{%
\int_{\mathbb{R}^{3}}f(x)\left\vert u^{+}\right\vert ^{p}dx}\right) ^{-\frac{%
2}{p-2}}\left( \frac{2}{4-p}\right) ^{-\frac{2}{p-2}}\left\Vert
u^{+}\right\Vert _{H^{1}}^{2} \\
&=&-\frac{p-2}{4-p}\left( \frac{4-p}{2}\right) ^{\frac{2}{p-2}}\widehat{s}%
^{-2}\Vert u^{+}\Vert _{H^{1}}^{2} \\
&<&-\frac{p-2}{4-p}\left( \frac{4-p}{2}\right) ^{\frac{2}{p-2}}\Vert
u^{+}\Vert _{H^{1}}^{2}.
\end{eqnarray*}%
Note that%
\begin{equation*}
\left( \frac{S_{p}^{p}}{f_{\max }}\right) ^{\frac{1}{p-2}}\leq \left\Vert
u^{+}\right\Vert _{H^{1}}<\left( \frac{2S_{p}^{p}}{f_{\max }\left(
4-p\right) }\right) ^{\frac{1}{p-2}}.
\end{equation*}%
Similar to the argument in Lemma \ref{h3-3}, we obtain that for $0<\lambda <%
\widetilde{\lambda },$%
\begin{equation*}
\inf_{s>0}\widehat{g}_{u^{+}}\left( s\right) <-\lambda \int_{\mathbb{R}%
^{3}}K(x)\phi _{K,u^{+}}(u^{+})^{2}dx.
\end{equation*}%
Then there are two numbers $s_{0}^{+}$ and $\widehat{s}_{0}^{+}$ satisfying%
\begin{equation*}
\widehat{s}<s_{0}^{+}<\left( \frac{2}{4-p}\right) ^{\frac{1}{p-2}}\widehat{s}%
<\widehat{s}_{0}^{+}
\end{equation*}%
such that
\begin{equation*}
\widehat{g}_{u^{+}}\left( s_{0}^{+}\right) +\lambda \int_{\mathbb{R}%
^{3}}K(x)\phi _{K,u^{+}}(u^{+})^{2}dx=0
\end{equation*}%
and
\begin{equation*}
\widehat{g}_{u^{+}}\left( \widehat{s}_{0}^{+}\right) +\lambda \int_{\mathbb{R%
}^{3}}K(x)\phi _{K,u^{+}}(u^{+})^{2}dx=0.
\end{equation*}%
Moreover, $\widehat{h}_{u^{+}}\left( s\right) $ is increasing when $s\in
\left( 0,s_{0}^{+}\right) \cup \left( \widehat{s}_{0}^{+},\infty \right) $
and is decreasing when $s_{0}^{+}<s<\widehat{s}_{0}^{+}.$ Note that $%
\widehat{h}_{u^{+}}\left( s_{\lambda }^{+}\right) =I_{\lambda }\left(
s_{\lambda }^{+}u^{+}\right) <0$ by the fact of $J_{\lambda }^{+}\left(
s_{\lambda }^{+}u^{+},u^{-}\right) <0.$ Thus, there holds $s_{0}^{+}u^{+}\in
\mathbf{M}_{\lambda }^{-}$ and%
\begin{equation*}
\sup_{s\in \left[ 0,s_{\lambda }^{+}\right] }I_{\lambda }\left(
su^{+}\right) =I_{\lambda }\left( s_{0}^{+}u^{+}\right) \geq \alpha
_{\lambda }^{-}.
\end{equation*}%
Since $u\in \mathbf{N}_{\lambda }^{\left( 1\right) },$ we have
\begin{equation*}
\left\Vert u^{+}\right\Vert _{H^{1}}^{2}+\lambda \left( \int_{\mathbb{R}%
^{3}}K(x)\phi _{K,u^{-}}(u^{+})^{2}dx+\int_{\mathbb{R}^{3}}K(x)\phi
_{K,u^{+}}(u^{+})^{2}dx\right) -\int_{\mathbb{R}^{3}}f(x)|u^{+}|^{p}dx=0,
\end{equation*}%
which implies that%
\begin{equation*}
\widehat{h}_{u^{+}}^{\prime }\left( 1\right) =-\lambda \int_{\mathbb{R}%
^{3}}K(x)\phi _{K,u^{-}}(u^{+})^{2}dx\leq 0.
\end{equation*}%
This indicates that $0<s_{0}^{+}\leq 1.$ Finally, we obtain
\begin{eqnarray*}
J_{\lambda }^{+}\left( u^{+},u^{-}\right) &=&\sup_{s\in \left[ 0,s_{\lambda
}^{+}\right] }J_{\lambda }^{+}\left( su^{+},u^{-}\right) \geq \sup_{s\in %
\left[ 0,s_{\lambda }^{+}\right] }I_{\lambda }\left( su^{+}\right) \\
&=&I_{\lambda }\left( s_{0}^{+}u^{+}\right) \geq \alpha _{\lambda }^{-}.
\end{eqnarray*}%
This completes the proof.
\end{proof}

\section{Estimates of energy}

Consider the following autonomous Schr\"{o}dinger-Poisson systems:%
\begin{equation}
\left\{
\begin{array}{ll}
-\Delta u+u+\lambda K_{\infty }\phi u=f_{\infty }\left\vert u\right\vert
^{p-2}u & \text{ in }\mathbb{R}^{3}, \\
-\Delta \phi =K_{\infty }u^{2} & \ \text{in }\mathbb{R}^{3},%
\end{array}%
\right.   \tag{$SP_{\lambda }^{\infty }$}
\end{equation}%
where $\lambda >0$ and $2<p<4.$ By \cite[Theorem 1.3]{SWF1}, there exists $%
\Lambda >0$ such that for each $0<\lambda <\Lambda ,$ system $\left(
SP_{\lambda }^{\infty }\right) $ admits a positive solution $(w_{\lambda
}^{\infty },\phi _{K_{\infty },w_{\lambda }^{\infty }})\in H^{1}(\mathbb{R}%
^{3})\times D^{1,2}(\mathbb{R}^{3})$ satisfying%
\begin{equation}
\alpha _{\lambda }^{\infty }:=I_{\lambda }^{\infty }\left( w_{\lambda
}^{\infty }\right) >\alpha _{0}^{\infty }:=\frac{p-2}{2p}\left( \frac{%
S_{p}^{p}}{f_{\infty }}\right) ^{\frac{2}{p-2}}  \label{3-7}
\end{equation}%
and%
\begin{equation}
\alpha _{\lambda }^{\infty }\rightarrow \alpha _{0}^{\infty }=\frac{p-2}{2p}%
\left( \frac{S_{p}^{p}}{f_{\infty }}\right) ^{\frac{2}{p-2}}\text{ as }%
\lambda \rightarrow 0^{+},  \label{3-9}
\end{equation}%
where $I_{\lambda }^{\infty }$ is the energy functional of system $\left(
SP_{\lambda }^{\infty }\right) .$ Note that conditions $\left( F1\right)
-\left( F2\right) $ and $\left( K1\right) -\left( K2\right) $ satisfy
conditions $\left( D1\right) -\left( D3\right) $ in \cite[Theorem 1.4]{SWF1}%
, and thus for each $0<\lambda <\Lambda ,$ system $\left( SP_{\lambda
}\right) $ admits a positive solution $(v_{\lambda },\phi _{K,v_{\lambda
}})\in H^{1}(\mathbb{R}^{3})\times D^{1,2}(\mathbb{R}^{3})$ satisfying%
\begin{equation}
\frac{p-2}{4p}\left( \frac{S_{p}^{p}}{f_{\max }}\right) ^{\frac{2}{p-2}%
}<\alpha _{\lambda }^{-}:=I_{\lambda }\left( v_{\lambda }\right) <\alpha
_{\lambda }^{\infty }.  \label{3-10}
\end{equation}%
Moreover, by using the Moser's iteration and the De Giorgi's iteration (or
see \cite[Proposition 1]{MT}), we can easily prove that both $w_{\lambda
}^{\infty }$ and $v_{\lambda }$ have exponential decay, and so, both $\phi
_{K_{\infty },w_{\lambda }^{\infty }}$ and $\phi _{K,v_{\lambda }}$ have the
same behavior. That is, for each $0<\varepsilon <1$ there exists $%
C_{\varepsilon }>0$ such that%
\begin{equation*}
v_{\lambda }\left( x\right) ,w_{\lambda }^{\infty }\left( x\right) ,\phi
_{K_{\infty },w_{\lambda }^{\infty }}\left( x\right) ,\phi _{K,v_{\lambda
}}\leq \left( x\right) C_{\varepsilon }\exp \left( -2^{\frac{\varepsilon }{%
1-\varepsilon }}\left( 1+\left\vert x\right\vert \right) ^{1-\varepsilon
}\right) .
\end{equation*}%
Note that%
\begin{equation*}
\left( 1+\left\vert x\right\vert \right) ^{1-\varepsilon }\geq 2^{\frac{%
-\varepsilon }{1-\varepsilon }}\left( 1+\left\vert x\right\vert
^{1-\varepsilon }\right) .
\end{equation*}%
Then, we have%
\begin{equation}
v_{\lambda }\left( x\right) ,w_{\lambda }^{\infty }\left( x\right) ,\phi
_{K_{\infty },w_{\lambda }^{\infty }}\left( x\right) ,\phi _{K,v_{\lambda
}}\left( x\right) \leq C_{\varepsilon }\exp \left( -\left\vert x\right\vert
^{1-\varepsilon }\right) .  \label{3-1}
\end{equation}%
For $n\in \mathbb{N},$ we define the sequence $\{w_{n}\}$ by%
\begin{equation}
w_{n}\left( x\right) =w_{\lambda }^{\infty }(x-ne_{1}),  \label{3-8}
\end{equation}%
where $e_{1}=\left( 1,0,0\right) $. Clearly, $I_{\lambda }^{\infty }\left(
w_{n}\right) =I_{\lambda }^{\infty }\left( w_{\lambda }^{\infty }\right) $
for all $n\in \mathbb{N},$ and by $(\ref{3-1})$ one has%
\begin{equation}
w_{n}\left( x\right) =w_{\lambda }^{\infty }\left( x-ne_{1}\right) \leq
C_{\varepsilon }\exp \left( \left\vert x\right\vert ^{1-\varepsilon
}-n^{1-\varepsilon }\right) .  \label{3-11}
\end{equation}%
Then following \cite{ASS}, we have the following result.

\begin{lemma}
\label{h-1}Suppose that conditions $\left( F1\right) -\left( F2\right) $ and
$\left( K1\right) -\left( K2\right) $ hold. Then for each $0<\varepsilon <1$
there exists $C_{\varepsilon }>0$ such that\newline
$(i) $ $\int_{\mathbb{R}^{3}}K(x)\phi _{K,w_{n}}v_{\lambda }^{2}dx=\int_{%
\mathbb{R}^{3}}K(x)\phi _{K,v_{\lambda }}w_{n}^{2}dx\leq K_{\max
}^{2}C_{\varepsilon }^{2}\exp \left( -2n^{1-\varepsilon }\right) ;$\newline
$(ii) $ $\left\vert \int_{\mathbb{R}^{3}}K(x)\phi _{K,v_{\lambda
}-w_{n}}\left( v_{\lambda }-w_{n}\right) ^{2}dx-\int_{\mathbb{R}%
^{3}}K(x)\phi _{K,v_{\lambda }}v_{\lambda }^{2}dx-\int_{\mathbb{R}%
^{3}}K(x)\phi _{K,w_{n}}w_{n}^{2}dx\right\vert \leq C_{\varepsilon }^{2}\exp
\left( -2n^{1-\varepsilon }\right) .$
\end{lemma}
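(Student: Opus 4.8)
The plan is to reduce both identities to pointwise control of the integrands, using the exponential-decay estimates $(\ref{3-1})$ and $(\ref{3-11})$ together with the symmetry and multilinearity of the Coulomb kernel; throughout write $v:=v_\lambda$ and $w:=w_n$. For $(i)$ the equality is purely formal: by Lemma \ref{h1} $(ii)$ both sides equal
\[
\frac{1}{4\pi}\int_{\mathbb{R}^3}\int_{\mathbb{R}^3}\frac{K(x)K(y)}{|x-y|}\,v^2(x)\,w^2(y)\,dx\,dy,
\]
and the kernel $|x-y|^{-1}$ is symmetric under $x\leftrightarrow y$. For the bound I would start from $\int_{\mathbb{R}^3}K\phi_{K,v}w^2\,dx\le K_{\max}\int_{\mathbb{R}^3}\phi_{K,v}\,w^2\,dx$ and split $\mathbb{R}^3=B_{n/2}(0)\cup B_{n/2}(0)^c$. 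On $B_{n/2}(0)$ I feed in $\phi_{K,v}(x)\le C_\varepsilon e^{-|x|^{1-\varepsilon}}$ from $(\ref{3-1})$ and the growth bound $w^2(x)\le C_\varepsilon^2 e^{2|x|^{1-\varepsilon}-2n^{1-\varepsilon}}$ from $(\ref{3-11})$; the exponent collapses to $|x|^{1-\varepsilon}-2n^{1-\varepsilon}\le(2^{-(1-\varepsilon)}-2)n^{1-\varepsilon}<0$ on the ball, so after integration (retaining only a polynomial volume factor $n^3$) this piece is exponentially small. On $B_{n/2}(0)^c$ I instead use $\phi_{K,v}(x)\le C_\varepsilon e^{-(n/2)^{1-\varepsilon}}$ together with $\int_{\mathbb{R}^3}w^2=\|w_\lambda^\infty\|_{L^2}^2$, which is again exponentially small.

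The only subtlety is matching the precise rate $\exp(-2n^{1-\varepsilon})$: the crude splitting yields $\exp(-c\,n^{1-\varepsilon})$ with some $c<2$ and carries the spurious factor $n^3$. Both defects disappear at once if one applies $(\ref{3-1})$ and $(\ref{3-11})$ with a strictly smaller exponent $\varepsilon'\in(0,\varepsilon)$: since $n^{1-\varepsilon'}/n^{1-\varepsilon}=n^{\varepsilon-\varepsilon'}\to\infty$, any bound of the form $C_{\varepsilon'}\,n^{3}\exp(-c\,n^{1-\varepsilon'})$ with $c>0$ is dominated by $C_\varepsilon\exp(-2n^{1-\varepsilon})$ for all large $n$, the finitely many small $n$ being swallowed by the constant. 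This finishes $(i)$.

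For $(ii)$ I would use the four-linear form $\Pi$ of Lemma \ref{h2}, for which $\int_{\mathbb{R}^3}K\phi_{K,u}u^2\,dx=\frac{1}{4\pi}\Pi(u,u,u,u)$. Expanding with $u=v-w$ and using that $\Pi$ is symmetric in its four entries gives
\[
\Pi(u,u,u,u)=\Pi(v,v,v,v)+\Pi(w,w,w,w)-4\Pi(v,v,v,w)+6\Pi(v,v,w,w)-4\Pi(v,w,w,w),
\]
so the bracket in $(ii)$ is $\tfrac{1}{4\pi}\bigl(-4\Pi(v,v,v,w)+6\Pi(v,v,w,w)-4\Pi(v,w,w,w)\bigr)$. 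Here $\Pi(v,v,w,w)=4\pi\int_{\mathbb{R}^3}K\phi_{K,w}v^2\,dx$ is exactly the quantity already controlled in $(i)$. Each of the two remaining mixed terms contains a pointwise product $v(y)w(y)$ (resp. $v(x)w(x)$), and here I would exploit subadditivity of $t\mapsto t^{1-\varepsilon}$: from $(\ref{3-1})$,
\[
v(y)w(y)\le C_\varepsilon^2 e^{-|y|^{1-\varepsilon}-|y-ne_1|^{1-\varepsilon}}\le C_\varepsilon^2 e^{-\frac12 n^{1-\varepsilon}}e^{-\frac12|y|^{1-\varepsilon}},
\]
where I used $|y|^{1-\varepsilon}+|y-ne_1|^{1-\varepsilon}\ge(|y|+|y-ne_1|)^{1-\varepsilon}\ge n^{1-\varepsilon}$. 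The factor $e^{-\frac12 n^{1-\varepsilon}}$ pulls out of the integral, and the remaining Coulomb double integral against $v^2$ and $e^{-\frac12|y|^{1-\varepsilon}}$ is a finite constant (the Newtonian potential of an $L^1\cap L^\infty$ density is bounded, cf. Lemma \ref{h1} $(i)$). A further application of the $\varepsilon'$-device promotes $e^{-\frac12 n^{1-\varepsilon'}}$ to $e^{-2n^{1-\varepsilon}}$.

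The main obstacle is the genuinely nonlocal term $\Pi(v,v,w,w)$ — equivalently the bound in $(i)$ — since there the Coulomb kernel couples the mass of $v_\lambda$ near the origin to the mass of $w_n$ near $ne_1$ with no pointwise product to absorb. Everything there rests on the decay estimate $(\ref{3-1})$ for the potential $\phi_{K,v_\lambda}$ and on arranging the region-splitting so that on each piece one exponentially small factor survives; the small-exponent trick $\varepsilon'<\varepsilon$ is precisely what converts the crude rate and the polynomial volume factor into the clean rate $\exp(-2n^{1-\varepsilon})$ required by the statement.
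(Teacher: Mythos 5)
Your argument is sound and reaches the stated conclusion, but it is considerably more elaborate than the paper's own proof, which disposes of $(i)$ in three lines (Fubini for the equality, then the pointwise decay estimates $(\ref{3-1})$ and $(\ref{3-11})$ fed directly into the integral, with the subadditivity $|x|^{1-\varepsilon}+|x-ne_{1}|^{1-\varepsilon}\geq n^{1-\varepsilon}$ left implicit) and of $(ii)$ in one sentence (``by part $(i)$''). Your region-splitting for $(i)$ and the explicit multilinear expansion for $(ii)$ make precise what the paper glosses over; in particular your observation that the crude estimate only yields $\exp(-cn^{1-\varepsilon})$ with $c<2$ times a polynomial volume factor, and that one must run the decay estimates at a smaller exponent $\varepsilon'<\varepsilon$ to recover the clean rate $\exp(-2n^{1-\varepsilon})$, is a genuine point the paper's proof silently skips. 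Since the lemma quantifies over all $\varepsilon\in(0,1)$ and the constant $C_{\varepsilon}$ is free, this relabelling is legitimate, and your version of $(i)$ is the more defensible one.

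One slip to correct in $(ii)$: the form $\Pi$ of Lemma \ref{h2} is \emph{not} symmetric in all four entries; it is invariant only under $u\leftrightarrow v$, $w\leftrightarrow z$, and the exchange of the pair $(u,v)$ with $(w,z)$. Consequently the expansion of $\Pi(v-w,v-w,v-w,v-w)$ produces $2\Pi(v,v,w,w)+4\Pi(v,w,v,w)$ rather than $6\Pi(v,v,w,w)$. The first of these is the nonlocal coupling
\begin{equation*}
\int_{\mathbb{R}^{3}}\int_{\mathbb{R}^{3}}\frac{K(x)K(y)}{|x-y|}\,v^{2}(x)\,w^{2}(y)\,dx\,dy
\end{equation*}
handled by part $(i)$, while the second,
\begin{equation*}
\int_{\mathbb{R}^{3}}\int_{\mathbb{R}^{3}}\frac{K(x)K(y)}{|x-y|}\,v(x)w(x)\,v(y)w(y)\,dx\,dy,
\end{equation*}
carries the pointwise product $vw$ at \emph{both} points and must be estimated separately; it is not covered by part $(i)$ as you claim. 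Fortunately it succumbs at once to the same subadditivity bound $v(y)w(y)\leq C_{\varepsilon}^{2}e^{-\frac{1}{2}n^{1-\varepsilon}}e^{-\frac{1}{2}|y|^{1-\varepsilon}}$ that you already deploy for the odd cross terms $\Pi(v,v,v,w)$ and $\Pi(w,w,w,v)$ (applied in each variable, it even yields a better rate), so the proof is repaired simply by listing this term and citing that estimate.
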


\begin{proof}
$\left( i\right) $ By Fubini's Theorem, we have%
\begin{equation*}
\int_{\mathbb{R}^{3}}K(x)\phi _{K,w_{n}}v_{\lambda }^{2}dx=\int_{\mathbb{R}%
^{3}}K(x)\phi _{K,v_{\lambda }}w_{n}^{2}dx.
\end{equation*}%
Then it follows from $(\ref{3-1})$ and $(\ref{3-11})$ that%
\begin{eqnarray*}
\int_{\mathbb{R}^{3}}K(x)\phi _{K,w_{n}}v_{\lambda }^{2}dx &=&\int_{\mathbb{R%
}^{3}}K(x)\phi _{K,v_{\lambda }}w_{n}^{2}dx \\
&\leq &K_{\max }^{2}C_{\varepsilon }^{2}\int_{\mathbb{R}^{3}}\exp \left(
-2\left\vert x\right\vert ^{1-\varepsilon }\right) \exp \left( -2\left\vert
x-e_{1}\right\vert ^{1-\varepsilon }\right) dx \\
&\leq &K_{\max }^{2}C_{\varepsilon }^{2}\exp \left( -2n^{1-\varepsilon
}\right) .
\end{eqnarray*}%
$\left( ii\right) $ By part $\left( i\right) ,$ we easily arrive at the
conclusion.
\end{proof}

\begin{lemma}
\label{h-2}Suppose that $2<p<4,$ and conditions $\left( F1\right) -\left(
F2\right) $ and $\left( K1\right) -\left( K2\right) $ hold. Then there
exists a positive number $\lambda _{3}\leq \min \{\widetilde{\lambda }%
,\Lambda \}$ such that for each $0<\lambda <\lambda _{3},$ there exist two
numbers $s_{\lambda }^{\left( 1\right) }$ and $s_{\lambda }^{\left( 2\right)
}$ satisfying
\begin{equation*}
s_{\lambda }<1=s_{\lambda }^{\left( 1\right) }<\left( \frac{2}{4-p}\right) ^{%
\frac{1}{p-2}}s_{\lambda }<s_{\lambda }^{\left( 2\right) }
\end{equation*}%
and $s_{\lambda }^{\left( j\right) }v_{\lambda }\in \mathbf{M}_{\lambda
}^{(j)}$ for $j=1,2,$ where
\begin{equation}
s_{\lambda }=\left( \frac{\left\Vert v_{\lambda }\right\Vert _{H^{1}}^{2}}{%
\int_{\mathbb{R}^{3}}f(x)\left\vert v_{\lambda }\right\vert ^{p}dx}\right) ^{%
\frac{1}{p-2}}.  \label{2-9}
\end{equation}%
Furhtermore, we have
\begin{equation*}
I_{\lambda }\left( v_{\lambda }\right) =\sup_{0\leq t\leq \widehat{s}%
_{\lambda }}I_{\lambda }(sv_{\lambda })
\end{equation*}%
and
\begin{equation*}
I_{\lambda }\left( s_{\lambda }^{\left( 2\right) }v_{\lambda }\right)
=\inf_{t\geq 0}I_{\lambda }\left( sv_{\lambda }\right) <I_{\lambda }\left(
\widehat{s}_{\lambda }v_{\lambda }\right) <0,
\end{equation*}%
where $\left( \frac{p}{2}\right) ^{\frac{1}{p-2}}<\widehat{s}_{\lambda
}=\left( \frac{p}{4-p}\right) ^{\frac{1}{p-2}}s_{\lambda }<\left( \frac{p}{%
4-p}\right) ^{\frac{1}{p-2}}.$
\end{lemma}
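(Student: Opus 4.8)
The plan is to run a fibering-map analysis along the ray $t\mapsto tv_{\lambda}$, exactly parallel to Lemmas \ref{h3-3} and \ref{h3-5}, but now for the known positive solution $v_{\lambda}$ of $(SP_{\lambda})$. Write $h_{v_{\lambda}}(t)=I_{\lambda}(tv_{\lambda})$ and factor
\[
h_{v_{\lambda}}^{\prime }(t)=t^{3}\left( g_{v_{\lambda }}(t)+\lambda \int_{\mathbb{R}^{3}}K(x)\phi _{K,v_{\lambda }}v_{\lambda }^{2}dx\right) ,\quad g_{v_{\lambda }}(t)=t^{-2}\left\Vert v_{\lambda }\right\Vert _{H^{1}}^{2}-t^{p-4}\int_{\mathbb{R}^{3}}f(x)\left\vert v_{\lambda }\right\vert ^{p}dx.
\]
First I would record the elementary behaviour of $g_{v_{\lambda}}$: it tends to $+\infty$ as $t\to0^{+}$ and to $0$ as $t\to\infty$, vanishes exactly at $s_{\lambda}$, and (from $g_{v_{\lambda}}^{\prime}$) is strictly decreasing on $\left( 0,\left( \frac{2}{4-p}\right) ^{\frac{1}{p-2}}s_{\lambda}\right) $ and strictly increasing afterwards, with its global minimum at $\left( \frac{2}{4-p}\right) ^{\frac{1}{p-2}}s_{\lambda}$. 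Using the lower bound $\left\Vert v_{\lambda}\right\Vert _{H^{1}}\geq (S_{p}^{p}/f_{\max})^{1/(p-2)}$ from $(\ref{2})$ together with $s_{\lambda}<1$, this minimal value is bounded above by a strictly negative constant independent of $\lambda$.

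Since $v_{\lambda}\in \mathbf{M}_{\lambda}$ we have $h_{v_{\lambda}}^{\prime }(1)=0$, that is $g_{v_{\lambda}}(1)=-\lambda \int_{\mathbb{R}^{3}}K(x)\phi _{K,v_{\lambda }}v_{\lambda }^{2}dx$, which by Lemma \ref{h1}$(i)$ and the upper bound on $\left\Vert v_{\lambda}\right\Vert _{H^{1}}$ is a small negative number for $\lambda$ small. As this level lies strictly between the (uniformly negative) minimum of $g_{v_{\lambda}}$ and $0$, the equation $h_{v_{\lambda}}^{\prime }(t)=0$ has exactly two positive roots, one on each monotone branch of $g_{v_{\lambda}}$. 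By $(\ref{3-10})$ we have $v_{\lambda}\in \mathbf{M}_{\lambda}^{-}$, so $t=1$ is a strict local maximum of $h_{v_{\lambda}}$ and hence the root on the decreasing branch; I set $s_{\lambda}^{(1)}=1$ and let $s_{\lambda}^{(2)}$ be the root on the increasing branch. Checking $s_{\lambda}<1$ directly from $\int_{\mathbb{R}^{3}}f(x)\left\vert v_{\lambda}\right\vert ^{p}dx=\left\Vert v_{\lambda}\right\Vert _{H^{1}}^{2}+\lambda \int_{\mathbb{R}^{3}}K(x)\phi _{K,v_{\lambda}}v_{\lambda}^{2}dx>\left\Vert v_{\lambda}\right\Vert _{H^{1}}^{2}$, and the two-sided estimate $\left( \frac{4-p}{2}\right) ^{\frac{1}{p-2}}<s_{\lambda}<1$ that follows from $v_{\lambda}\in \mathbf{M}_{\lambda}^{-}$ via $(\ref{2-2})$, the monotonicity of $g_{v_{\lambda}}$ on each branch yields the required ordering $s_{\lambda}<1<\left( \frac{2}{4-p}\right) ^{\frac{1}{p-2}}s_{\lambda}<s_{\lambda}^{(2)}$.

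For the membership statements, recall from the preliminary computations that $\mathbf{M}_{\lambda}^{(1)}\subset \mathbf{M}_{\lambda}^{-}$ and $\mathbf{M}_{\lambda}^{(2)}\subset \mathbf{M}_{\lambda}^{+}$, and that $\mathbf{M}_{\lambda}\left( C(p)(S_{p}^{p}/f_{\infty})^{2/(p-2)}\right) =\mathbf{M}_{\lambda}^{(1)}\cup \mathbf{M}_{\lambda}^{(2)}$. Since $C(p)>\frac{p-2}{2p}$, by $(\ref{3-9})$ we may shrink $\lambda$ so that $\alpha _{\lambda}^{\infty }<C(p)(S_{p}^{p}/f_{\infty})^{2/(p-2)}$; then $(\ref{3-10})$ places $v_{\lambda}$ below this energy threshold, and as $v_{\lambda}\in \mathbf{M}_{\lambda}^{-}$ it must lie in $\mathbf{M}_{\lambda}^{(1)}$, giving $s_{\lambda}^{(1)}v_{\lambda}=v_{\lambda}\in \mathbf{M}_{\lambda}^{(1)}$. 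Because $s_{\lambda}^{(2)}$ is a local minimum of $h_{v_{\lambda}}$, the point $s_{\lambda}^{(2)}v_{\lambda}$ lies in $\mathbf{M}_{\lambda}^{+}$ and satisfies $I_{\lambda}(s_{\lambda}^{(2)}v_{\lambda})<0<C(p)(S_{p}^{p}/f_{\infty})^{2/(p-2)}$, whence $s_{\lambda}^{(2)}v_{\lambda}\in \mathbf{M}_{\lambda}^{(2)}$.

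For the two extremal identities I would reuse the device of Lemma \ref{h3-3}: writing $h_{v_{\lambda}}(s)=s^{4}\left( \widetilde{g}(s)+\frac{\lambda}{4}\int_{\mathbb{R}^{3}}K(x)\phi _{K,v_{\lambda}}v_{\lambda}^{2}dx\right) $ with $\widetilde{g}(s)=\frac{1}{2}s^{-2}\left\Vert v_{\lambda}\right\Vert _{H^{1}}^{2}-\frac{1}{p}s^{p-4}\int_{\mathbb{R}^{3}}f(x)\left\vert v_{\lambda}\right\vert ^{p}dx$, a direct computation shows that $\widetilde{g}$ attains its minimum precisely at $\widehat{s}_{\lambda}=\left( \frac{p}{4-p}\right) ^{\frac{1}{p-2}}s_{\lambda}$, that the bounds $\left( \frac{4-p}{2}\right) ^{\frac{1}{p-2}}<s_{\lambda}<1$ force $\left( \frac{p}{2}\right) ^{\frac{1}{p-2}}<\widehat{s}_{\lambda}<\left( \frac{p}{4-p}\right) ^{\frac{1}{p-2}}$, and that $\widetilde{g}(\widehat{s}_{\lambda})$ is bounded above by a negative constant independent of $\lambda$. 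Hence for $\lambda$ small $I_{\lambda}(\widehat{s}_{\lambda}v_{\lambda})<0$; moreover a parallel evaluation gives $g_{v_{\lambda}}(\widehat{s}_{\lambda})<-\lambda \int_{\mathbb{R}^{3}}K(x)\phi _{K,v_{\lambda}}v_{\lambda}^{2}dx$, so $h_{v_{\lambda}}^{\prime }(\widehat{s}_{\lambda})<0$ and therefore $1<\widehat{s}_{\lambda}<s_{\lambda}^{(2)}$. Since $h_{v_{\lambda}}$ increases on $(0,1)$ and decreases on $(1,s_{\lambda}^{(2)})$, the supremum over $[0,\widehat{s}_{\lambda}]$ is attained at $t=1$, giving $I_{\lambda}(v_{\lambda})=\sup_{0\leq t\leq \widehat{s}_{\lambda}}I_{\lambda}(tv_{\lambda})$; and as $h_{v_{\lambda}}$ is coercive (because $\int_{\mathbb{R}^{3}}K(x)\phi _{K,v_{\lambda}}v_{\lambda}^{2}dx>0$) with unique interior global minimizer $s_{\lambda}^{(2)}$, we obtain $I_{\lambda}(s_{\lambda}^{(2)}v_{\lambda})=\inf_{t\geq 0}I_{\lambda}(tv_{\lambda})<I_{\lambda}(\widehat{s}_{\lambda}v_{\lambda})<0$. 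The main obstacle is to make the several smallness requirements on $\lambda$ simultaneous and uniform over $v_{\lambda}$; this is exactly where the $\lambda$-independent negative upper bounds on the minima of $g_{v_{\lambda}}$ and $\widetilde{g}$ (furnished by $(\ref{2})$ and $s_{\lambda}<1$) are indispensable, after which $\lambda _{3}$ is taken to be the minimum of $\widetilde{\lambda}$, $\Lambda$ and the finitely many thresholds produced above.
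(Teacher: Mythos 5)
Your proposal is correct and follows essentially the same route as the paper: the paper likewise reduces the existence of $s_{\lambda}^{(1)}=1$ and $s_{\lambda}^{(2)}$ to the fibering-map analysis of Lemma \ref{h3-5} (the factorization of $h_{v_{\lambda}}^{\prime}$ through $g_{v_{\lambda}}$) and then establishes the sup/inf identities by writing $I_{\lambda}(sv_{\lambda})=s^{4}\bigl(\overline{g}_{v_{\lambda}}(s)+\tfrac{\lambda}{4}\int_{\mathbb{R}^{3}}K(x)\phi_{K,v_{\lambda}}v_{\lambda}^{2}dx\bigr)$ and bounding $\overline{g}_{v_{\lambda}}(\widehat{s}_{\lambda})$ by a $\lambda$-independent negative constant via $(\ref{2})$ and $s_{\lambda}<1$. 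The only difference is that you spell out details the paper leaves implicit (the location of $t=1$ on the decreasing branch via $v_{\lambda}\in\mathbf{M}_{\lambda}^{-}$, and the verification $h_{v_{\lambda}}^{\prime}(\widehat{s}_{\lambda})<0$ placing $\widehat{s}_{\lambda}\in(1,s_{\lambda}^{(2)})$), which is a welcome completion rather than a departure.
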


\begin{proof}
Similar to the argument of Lemma \ref{h3-5}, one can easily prove that there
exist two numbers $s_{\lambda }^{\left( 1\right) }$ and $s_{\lambda
}^{\left( 2\right) }$ satisfying%
\begin{equation*}
s_{\lambda }<1=s_{\lambda }^{\left( 1\right) }<\left( \frac{2}{4-p}\right) ^{%
\frac{1}{p-2}}s_{\lambda }<s_{\lambda }^{\left( 2\right) }
\end{equation*}%
such that $s_{\lambda }^{\left( j\right) }v_{\lambda }\in \mathbf{M}%
_{\lambda }^{(j)}$ for $j=1,2,$ and $I_{\lambda }\left( s_{\lambda }^{\left(
2\right) }v_{\lambda }\right) =\inf_{t\geq 0}I_{\lambda }(sv_{\lambda }),$
where $s_{\lambda }$ is defined as $(\ref{2-9}).$

Note that%
\begin{eqnarray*}
I_{\lambda }\left( sv_{\lambda }\right)  &=&\frac{s^{2}}{2}\left\Vert
v_{\lambda }\right\Vert _{H^{1}}^{2}+\frac{\lambda s^{4}}{4}\int_{\mathbb{R}%
^{3}}K(x)\phi _{K,v_{\lambda }}v_{\lambda }^{2}dx-\frac{s^{p}}{p}\int_{%
\mathbb{R}^{3}}f(x)\left\vert v_{\lambda }\right\vert ^{p}dx \\
&=&s^{4}\left[ \overline{g}_{v_{\lambda }}\left( s\right) +\frac{\lambda }{4}%
\int_{\mathbb{R}^{3}}K(x)\phi _{K,v_{\lambda }}v_{\lambda }^{2}dx\right] ,
\end{eqnarray*}%
where
\begin{equation*}
\overline{g}_{v_{\lambda }}\left( s\right) =\frac{s^{-2}}{2}\left\Vert
v_{\lambda }\right\Vert _{H^{1}}^{2}-\frac{s^{p-4}}{p}\int_{\mathbb{R}%
^{3}}f(x)\left\vert v_{\lambda }\right\vert ^{p}dx.
\end{equation*}%
Clearly, $I_{\lambda }\left( sv_{\lambda }\right) =0$ if and only if
\begin{equation*}
\overline{g}_{v_{\lambda }}\left( s\right) +\frac{\lambda }{4}\int_{\mathbb{R%
}^{3}}K(x)\phi _{K,v_{\lambda }}v_{\lambda }^{2}dx=0.
\end{equation*}%
By analyzing the functions $\overline{g}_{v_{\lambda }},$ one has
\begin{equation*}
\overline{g}_{v_{\lambda }}\left( \overline{s}_{\lambda }\right) =0,\
\lim_{s\rightarrow 0^{+}}g_{\lambda }(s)=\infty \text{ and}\
\lim_{s\rightarrow \infty }g_{\lambda }(s)=0,
\end{equation*}%
where%
\begin{equation}
\overline{s}_{\lambda }=\left( \frac{p\left\Vert v_{\lambda }\right\Vert
_{H^{1}}^{2}}{2\int_{\mathbb{R}^{3}}f(x)\left\vert v_{\lambda }\right\vert
^{p}dx}\right) ^{\frac{1}{p-2}}.  \label{2-8}
\end{equation}%
Moreover, it is easy to see that
\begin{eqnarray*}
\overline{g}_{v_{\lambda }}^{\prime }\left( s\right)  &=&-s^{-3}\left\Vert
v_{\lambda }\right\Vert _{H^{1}}^{2}+\frac{4-p}{p}s^{p-5}\int_{\mathbb{R}%
^{3}}f(x)\left\vert v_{\lambda }\right\vert ^{p}dx \\
&=&s^{-3}\left( -\left\Vert v_{\lambda }\right\Vert _{H^{1}}^{2}+\frac{4-p}{p%
}s^{p-2}\int_{\mathbb{R}^{3}}f(x)\left\vert v_{\lambda }\right\vert
^{p}dx\right) .
\end{eqnarray*}%
This indicates that $\overline{g}_{v_{\lambda }}\left( s\right) $ is
decreasing when $0<s<\widehat{s}_{\lambda }$ and is increasing when $s>%
\widehat{s}_{\lambda },$ where%
\begin{equation*}
\left( \frac{p}{2}\right) ^{\frac{1}{p-2}}<\widehat{s}_{\lambda }:=\left(
\frac{p}{4-p}\right) ^{\frac{1}{p-2}}s_{\lambda }<\left( \frac{p}{4-p}%
\right) ^{\frac{1}{p-2}}.
\end{equation*}%
Moreover, there holds
\begin{eqnarray*}
\inf_{s>0}\overline{g}_{v_{\lambda }}\left( s\right)  &=&\overline{g}%
_{v_{\lambda }}\left( \widehat{s}_{\lambda }\right) =-\frac{p-2}{2\left(
4-p\right) }\left( \frac{4-p}{p}\right) ^{\frac{2}{p-2}}s_{\lambda
}^{-2}\Vert v_{\lambda }\Vert _{H^{1}}^{2} \\
&<&-\frac{p-2}{2\left( 4-p\right) }\left( \frac{4-p}{p}\right) ^{\frac{2}{p-2%
}}\Vert v_{\lambda }\Vert _{H^{1}}^{2}.
\end{eqnarray*}%
Note that%
\begin{equation*}
\left( \frac{S_{p}^{p}}{f_{\max }}\right) ^{\frac{1}{p-2}}\leq \left\Vert
v_{\lambda }\right\Vert _{H^{1}}<\left( \frac{2S_{p}^{p}}{f_{\max }\left(
4-p\right) }\right) ^{\frac{1}{p-2}}.
\end{equation*}%
Then there exists a positive constant $\lambda _{3}\leq \min \{\widetilde{%
\lambda },\Lambda \}$ such that for each $0<\lambda <\lambda _{3},$
\begin{equation}
\inf_{s>0}\overline{g}_{v_{\lambda }}\left( s\right) =\overline{g}%
_{v_{\lambda }}\left( \widehat{s}_{\lambda }\right) <-\frac{\lambda }{4}%
\int_{\mathbb{R}^{3}}K(x)\phi _{K,v_{\lambda }}v_{\lambda }^{2}dx.
\label{2-13}
\end{equation}%
Thus, there exist two numbers $\overline{s}_{\lambda }^{(j)}(j=1,2)$
satisfying%
\begin{equation*}
\overline{s}_{\lambda }<\overline{s}_{\lambda }^{(1)}<\left( \frac{p}{4-p}%
\right) ^{\frac{1}{p-2}}s_{\lambda }<\overline{s}_{\lambda }^{(2)}
\end{equation*}%
such that
\begin{equation*}
\overline{g}_{v_{\lambda }}\left( \overline{s}_{\lambda }^{(j)}\right)
+\lambda \int_{\mathbb{R}^{3}}K(x)\phi _{v_{\lambda }}v_{\lambda }^{2}dx=0,
\end{equation*}%
namely, $I_{\lambda }\left( \overline{s}_{\lambda }^{(j)}v_{\lambda }\right)
=0,$ where $s_{\lambda }$ and $\overline{s}_{\lambda }$ are defined as $(\ref%
{2-9})$ and $(\ref{2-8})$, respectively. It follows from $(\ref{2-13})$ that%
\begin{eqnarray*}
I_{\lambda }\left( \widehat{s}_{\lambda }v_{\lambda }\right)  &=&I_{\lambda
}\left( \left( \frac{p}{4-p}\right) ^{\frac{1}{p-2}}s_{\lambda }v_{\lambda
}\right)  \\
&=&\left( \left( \frac{p}{4-p}\right) ^{\frac{1}{p-2}}s_{\lambda }\right)
^{4}\left[ g_{\lambda }\left( \left( \frac{p}{4-p}\right) ^{\frac{1}{p-2}%
}s_{\lambda }\right) +\frac{\lambda }{4}\int_{\mathbb{R}^{3}}K(x)\phi
_{K,v_{\lambda }}v_{\lambda }^{2}dx\right]  \\
&<&0,
\end{eqnarray*}%
which leads to
\begin{equation*}
\inf_{t\geq 0}I_{\lambda }\left( tv_{\lambda }\right) <I_{\lambda }\left(
\widehat{s}_{\lambda }v_{\lambda }\right) <0,
\end{equation*}%
and%
\begin{equation*}
I_{\lambda }\left( v_{\lambda }\right) =I_{\lambda }\left( s_{\lambda
}^{\left( 1\right) }v_{\lambda }\right) =\sup_{0\leq t\leq \widehat{s}%
_{\lambda }}I_{\lambda }(sv_{\lambda }).
\end{equation*}%
This completes the proof.
\end{proof}

\begin{lemma}
\label{h-3}Suppose that $2<p<4$ and conditions $\left( F1\right) ,\left(
F2\right) ,$ $\left( K1\right) $ and $\left( K2\right) $ hold. Then there
exist two positive number $\lambda _{4}\leq \min \{\widetilde{\lambda }%
,\Lambda \}$ and $n_{0}\in \mathbb{N}$ such that for every $0<\lambda
<\lambda _{4}$ and $n\geq n_{0},$ there exist two numbers $t_{n}^{\left(
1\right) }$ and $t_{n}^{\left( 2\right) }$ satisfying
\begin{equation*}
t_{n}^{\infty }<1=t_{n}^{\left( 1\right) }<\left( \frac{2}{4-p}\right) ^{%
\frac{1}{p-2}}t_{n}^{\infty }<t_{n}^{\left( 2\right) }
\end{equation*}%
and $t_{n}^{\left( j\right) }w_{n}\in \mathbf{M}_{\lambda }^{(j)}$ for $%
j=1,2,$ where
\begin{equation}
t_{n}^{\infty }=\left( \frac{\left\Vert w_{n}\right\Vert _{H^{1}}^{2}}{\int_{%
\mathbb{R}^{3}}f_{\infty }\left\vert w_{n}\right\vert ^{p}dx}\right) ^{\frac{%
1}{p-2}}.  \label{5-1}
\end{equation}%
Furthermore, we have
\begin{equation*}
I_{\lambda }\left( w_{n}\right) =\sup_{0\leq t\leq \widehat{t}%
_{n}}I_{\lambda }(tw_{n}),
\end{equation*}%
and
\begin{equation*}
I_{\lambda }\left( t_{n}^{\left( 2\right) }w_{n}\right) =\inf_{t\geq
0}I_{\lambda }\left( tw_{n}\right) <I_{\lambda }\left( \widehat{t}%
_{n}w_{n}\right) <0,
\end{equation*}%
where $\left( \frac{p}{2}\right) ^{\frac{1}{p-2}}<\widehat{t}_{n}<\left(
\frac{p}{4-p}\right) ^{\frac{1}{p-2}}.$
\end{lemma}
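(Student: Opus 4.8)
The plan is to rerun the fibering-map argument of Lemma \ref{h-2} (itself a variant of Lemma \ref{h3-5}) for the one-parameter family $t\mapsto I_\lambda(tw_n)$, the only genuinely new feature being that every estimate must be made \emph{uniform in} $n$. So the first step is to pin down the $n$-asymptotics of the three quantities governing this map. By translation invariance $\|w_n\|_{H^1}^2=\|w_\lambda^\infty\|_{H^1}^2$ is independent of $n$; from $(F1)$--$(F2)$ and the exponential decay $(\ref{3-1})$--$(\ref{3-11})$ one gets $\int_{\mathbb{R}^3}f(x)|w_n|^pdx\to f_\infty\int_{\mathbb{R}^3}|w_\lambda^\infty|^pdx$; and by Lemma \ref{h1} $(i)$ the nonlocal term $\int_{\mathbb{R}^3}K(x)\phi_{K,w_n}w_n^2dx\le\overline{S}^{-2}S_{12/5}^{-4}K_{\max}^2\|w_\lambda^\infty\|_{H^1}^4$ is bounded uniformly in $n$. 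Since $w_\lambda^\infty$ solves $(SP_\lambda^\infty)$ in the low-energy regime by $(\ref{3-7})$--$(\ref{3-10})$, its norm obeys the limiting-system analogues of $(\ref{2-2})$ and $(\ref{4-5})$; in particular $\|w_\lambda^\infty\|_{H^1}^2<f_\infty\int|w_\lambda^\infty|^pdx<\tfrac{2}{4-p}\|w_\lambda^\infty\|_{H^1}^2$.

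I would then factor, exactly as for $\overline{g}_{v_\lambda}$ in Lemma \ref{h-2},
\[
I_\lambda(tw_n)=t^4\Big[\overline{g}_{w_n}(t)+\tfrac{\lambda}{4}\int_{\mathbb{R}^3}K(x)\phi_{K,w_n}w_n^2dx\Big],\qquad \overline{g}_{w_n}(t)=\tfrac{t^{-2}}{2}\|w_n\|_{H^1}^2-\tfrac{t^{p-4}}{p}\int_{\mathbb{R}^3}f(x)|w_n|^pdx.
\]
A direct computation shows $\overline{g}_{w_n}$ decreases then increases, with unique minimizer $\widehat{t}_n=(\tfrac{p}{4-p})^{1/(p-2)}(\|w_n\|_{H^1}^2/\int f(x)|w_n|^pdx)^{1/(p-2)}$. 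The two norm inequalities above, together with the convergence of $\int f(x)|w_n|^pdx$, yield $(\tfrac{p}{2})^{1/(p-2)}<\widehat{t}_n<(\tfrac{p}{4-p})^{1/(p-2)}$ and $t_n^\infty<1$ for all $n\ge n_0$, and also a lower bound $|\overline{g}_{w_n}(\widehat{t}_n)|\ge c_0>0$ with $c_0$ independent of $n$.

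Next I fix $\lambda_4\le\min\{\widetilde{\lambda},\Lambda\}$ so small that, for all $0<\lambda<\lambda_4$ and all $n\ge n_0$,
\[
\inf_{t>0}\overline{g}_{w_n}(t)=\overline{g}_{w_n}(\widehat{t}_n)<-\tfrac{\lambda}{4}\int_{\mathbb{R}^3}K(x)\phi_{K,w_n}w_n^2dx;
\]
here the uniform lower bound $c_0$ and the uniform upper bound on the nonlocal term guarantee that a single $\lambda_4$ works for every $n\ge n_0$. This inequality forces the bracket to have exactly two zeros, so $t\mapsto I_\lambda(tw_n)$ has two critical points $t_n^{(1)}<\widehat{t}_n<t_n^{(2)}$, a local maximum at the first (hence $t_n^{(1)}w_n\in\mathbf{M}_\lambda^{(1)}$) and a local minimum at the second (hence $t_n^{(2)}w_n\in\mathbf{M}_\lambda^{(2)}$). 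The chain $t_n^\infty<t_n^{(1)}<(\tfrac{2}{4-p})^{1/(p-2)}t_n^\infty<t_n^{(2)}$ and $I_\lambda(\widehat{t}_nw_n)<0$ follow verbatim from Lemma \ref{h-2}, and monotonicity of the fibering map gives $\sup_{0\le t\le\widehat{t}_n}I_\lambda(tw_n)=I_\lambda(t_n^{(1)}w_n)$ and $\inf_{t\ge0}I_\lambda(tw_n)=I_\lambda(t_n^{(2)}w_n)<I_\lambda(\widehat{t}_nw_n)<0$; the local maximizer $t_n^{(1)}$ converges to $1$ as $n\to\infty$ because $w_n$ is asymptotically on $\mathbf{M}_\lambda$, which is the normalization $t_n^{(1)}=1$ recorded in the statement.

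The essential difficulty, absent in Lemma \ref{h-2}, is exactly this uniformity in $n$: every constant must survive the translation $x\mapsto x-ne_1$. This is workable only because $\|w_n\|_{H^1}$ is \emph{exactly} constant while the other two quantities converge to their limiting-system values at the controlled exponential rate furnished by $(F2)$, $(K2)$ and Lemma \ref{h-1}; this lets the threshold $\lambda_4$, the index $n_0$, and the bounds on $\widehat{t}_n,t_n^{(1)},t_n^{(2)}$ all be chosen once and for all.
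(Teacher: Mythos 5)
Your proposal is correct and follows essentially the same route as the paper: the factorization $I_{\lambda}(tw_{n})=t^{4}\bigl[g_{w_{n}}(t)+\tfrac{\lambda}{4}\int_{\mathbb{R}^{3}}K(x)\phi_{K,w_{n}}w_{n}^{2}dx\bigr]$, the monotonicity analysis of $g_{w_{n}}$ with minimizer $\widehat{t}_{n}$, the uniform-in-$n$ bounds on $\left\Vert w_{n}\right\Vert _{H^{1}}$ and on the nonlocal term to choose a single $\lambda_{4}$, and the $o(1)$ comparison of $\int f(x)|w_{n}|^{p}dx$ with $f_{\infty}\int|w_{\lambda}^{\infty}|^{p}dx$ to locate $t_{n}^{\infty}$ and $\widehat{t}_{n}$. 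Your explicit emphasis on uniformity in $n$ is exactly the point the paper handles implicitly via the $n$-independent bounds $(\frac{S_{p}^{p}}{f_{\max}})^{\frac{1}{p-2}}\leq\left\Vert w_{n}\right\Vert _{H^{1}}<(\frac{2S_{p}^{p}}{f_{\max}(4-p)})^{\frac{1}{p-2}}$, and your treatment of $t_{n}^{(1)}=1$ is at the same level of rigor as the paper's own (which simply invokes the argument of Lemma \ref{h3-5}).
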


\begin{proof}
Similar to the argument of Lemma \ref{h3-5}, it is easy to prove that there
exist two numbers $t_{n}^{\left( 1\right) }$ and $t_{n}^{\left( 2\right) }$
satisfying
\begin{equation*}
t_{n}^{\infty }<1=t_{n}^{\left( 1\right) }<\left( \frac{2}{4-p}\right) ^{%
\frac{1}{p-2}}t_{n}^{\infty }<t_{n}^{\left( 2\right) }
\end{equation*}%
and $t_{n}^{\left( j\right) }w_{n}\in \mathbf{M}_{\lambda }^{(j)}$ for $%
j=1,2,$ and $I_{\lambda }\left( t_{n}^{\left( 2\right) }w_{n}\right)
=\inf_{t\geq 0}I_{\lambda }\left( tw_{n}\right) ,$ where $t_{n}^{\infty }$
is defined as $(\ref{5-1})$ satisfying
\begin{equation}
\left( \frac{4-p}{2}\right) ^{\frac{1}{p-2}}<t_{n}^{\infty }<1.  \label{5-13}
\end{equation}

Note that%
\begin{equation*}
\int_{\mathbb{R}^{3}}\left( f(x)-f_{\infty }\right) \left\vert
w_{n}\right\vert ^{p}dx=o\left( 1\right)
\end{equation*}%
and%
\begin{equation*}
\int_{\mathbb{R}^{3}}K(x)\phi _{K,w_{n}}w_{n}^{2}dx-\int_{\mathbb{R}%
^{3}}K_{\infty }\phi _{K_{\infty },w_{n}}w_{n}^{2}dx=o(1).
\end{equation*}%
Then it follows from $\left( \ref{5-1}\right) -\left( \ref{5-13}\right) $
that there exists $n_{0}\in \mathbb{N}$ such that for any $n\geq n_{0},$%
\begin{equation*}
\left( \frac{4-p}{2}\right) ^{\frac{1}{p-2}}<t_{n}:=\left( \frac{\left\Vert
w_{n}\right\Vert _{H^{1}}^{2}}{\int_{\mathbb{R}^{3}}f(x)\left\vert
w_{n}\right\vert ^{p}dx}\right) ^{\frac{1}{p-2}}<1.
\end{equation*}%
Moreover, by $(\ref{3-7}),$ one has $I_{\lambda }\left( w_{n}\right) =\alpha
_{\lambda }^{\infty }$ for any $n\geq n_{0}.$ It is easy to see that%
\begin{eqnarray}
I_{\lambda }\left( tw_{n}\right)  &=&\frac{t^{2}}{2}\left\Vert
w_{n}\right\Vert _{H^{1}}^{2}+\frac{\lambda t^{4}}{4}\int_{\mathbb{R}%
^{3}}K(x)\phi _{K,w_{n}}w_{n}^{2}dx-\frac{t^{p}}{p}\int_{\mathbb{R}%
^{3}}f(x)\left\vert w_{n}\right\vert ^{p}dx  \notag \\
&=&t^{4}\left[ g_{w_{n}}\left( t\right) +\frac{\lambda }{4}\int_{\mathbb{R}%
^{3}}K(x)\phi _{K,w_{n}}w_{n}^{2}dx\right] ,  \label{5-14}
\end{eqnarray}%
where
\begin{equation*}
g_{w_{n}}\left( t\right) =\frac{t^{-2}}{2}\left\Vert w_{n}\right\Vert
_{H^{1}}^{2}-\frac{t^{p-4}}{p}\int_{\mathbb{R}^{3}}f(x)\left\vert
w_{n}\right\vert ^{p}dx.
\end{equation*}%
Clearly, $I_{\lambda }\left( tw_{n}\right) =0$ if and only if
\begin{equation*}
g_{w_{n}}\left( t\right) +\frac{\lambda }{4}\int_{\mathbb{R}^{3}}K(x)\phi
_{K,w_{n}}w_{n}^{2}dx=0.
\end{equation*}%
By analyzing the functions $g_{w_{n}}$ one has
\begin{equation*}
g_{w_{n}}\left( \widetilde{t}_{n}\right) =0,\ \lim_{t\rightarrow
0^{+}}g_{w_{n}}(t)=\infty \text{ and}\ \lim_{t\rightarrow \infty
}g_{w_{n}}(t)=0,
\end{equation*}%
where%
\begin{equation}
\widetilde{t}_{n}=\left( \frac{p\left\Vert w_{n}\right\Vert _{H^{1}}^{2}}{%
2\int_{\mathbb{R}^{3}}f(x)\left\vert w_{n}\right\vert ^{p}dx}\right) ^{\frac{%
1}{p-2}}.  \label{5-4}
\end{equation}%
A direct calculation shows that
\begin{equation*}
g_{w_{n}}^{\prime }\left( t\right) =t^{-3}\left( -\left\Vert
w_{n}\right\Vert _{H^{1}}^{2}+\frac{4-p}{p}t^{p-2}\int_{\mathbb{R}%
^{3}}f(x)\left\vert w_{n}\right\vert ^{p}dx\right) .
\end{equation*}%
This implies that $g_{w_{n}}\left( t\right) $ is decreasing when $0<t<%
\widehat{t}_{n}$ and is increasing when $t>\widehat{t}_{n},$ where
\begin{equation}
\left( \frac{p}{2}\right) ^{\frac{1}{p-2}}<\widehat{t}_{n}:=\left( \frac{p}{%
4-p}\right) ^{\frac{1}{p-2}}t_{n}<\left( \frac{p}{4-p}\right) ^{\frac{1}{p-2}%
}.  \label{5-15}
\end{equation}%
Moreover, there holds
\begin{eqnarray*}
\inf_{t>0}g_{w_{n}}\left( t\right)  &=&g_{w_{n}}\left( \widehat{t}%
_{n}\right) =-\frac{p-2}{2\left( 4-p\right) }\left( \frac{4-p}{p}\right) ^{%
\frac{2}{p-2}}t_{n}^{-2}\Vert w_{n}\Vert _{H^{1}}^{2} \\
&<&-\frac{p-2}{2\left( 4-p\right) }\left( \frac{4-p}{p}\right) ^{\frac{2}{p-2%
}}\Vert w_{n}\Vert _{H^{1}}^{2}.
\end{eqnarray*}%
Since%
\begin{equation*}
\left( \frac{S_{p}^{p}}{f_{\max }}\right) ^{\frac{1}{p-2}}\leq \left\Vert
w_{n}\right\Vert _{H^{1}}<\left( \frac{2S_{p}^{p}}{f_{\max }\left(
4-p\right) }\right) ^{\frac{1}{p-2}},
\end{equation*}%
there exists a positive number $\lambda _{4}\leq \min \{\widetilde{\lambda }%
,\Lambda \}$ such that%
\begin{equation*}
\inf_{t>0}g_{w_{n}}\left( t\right) =g_{w_{n}}\left( \widehat{t}_{n}\right) <-%
\frac{\lambda }{4}\int_{\mathbb{R}^{3}}K(x)\phi _{K,w_{n}}w_{n}^{2}dx
\end{equation*}%
for all $0<\lambda <\lambda _{4}.$ Thus, there are two numbers $\widehat{t}%
_{n}^{(1)}$ and $\widehat{t}_{n}^{\left( 2\right) }$ satisfying $\widetilde{t%
}_{n}<\widehat{t}_{n}^{\left( 1\right) }<\widehat{t}_{n}<\widehat{t}%
_{n}^{\left( 2\right) }$ such that
\begin{equation*}
g_{w_{n}}\left( \widehat{t}_{n}^{\left( j\right) }\right) +\frac{\lambda }{4}%
\int_{\mathbb{R}^{3}}K(x)\phi _{w_{n}}w_{n}^{2}dx=0\text{ for }j=1,2,
\end{equation*}%
i.e., $I_{\lambda }\left( \widehat{t}_{n}^{\left( j\right) }w_{n}\right) =0,$
where $\widetilde{t}_{n}$ and $\widehat{t}_{n}$ are defined as $(\ref{5-4})$
and $(\ref{5-15})$, respectively. It follows from $(\ref{5-14})$ that%
\begin{eqnarray*}
I_{\lambda }\left( \widehat{t}_{n}w_{n}\right)  &=&\left( \left( \frac{p}{4-p%
}\right) ^{\frac{1}{p-2}}t_{n}\right) ^{4}\left[ g_{w_{n}}\left( \left(
\frac{p}{4-p}\right) ^{\frac{1}{p-2}}t_{n}\right) +\frac{\lambda }{4}\int_{%
\mathbb{R}^{3}}K(x)\phi _{K,w_{n}}w_{n}^{2}dx\right]  \\
&<&0,
\end{eqnarray*}%
which gives $\inf_{t\geq 0}I_{\lambda }\left( tw_{n}\right) <I_{\lambda
}\left( \widehat{t}_{n}w_{n}\right) <0$ and $I_{\lambda }\left( w_{n}\right)
=\sup_{0\leq t\leq \widehat{t}_{n}}I_{\lambda }(tw_{n}).$ This completes the
proof.
\end{proof}

\begin{lemma}
\label{h-4}Suppose that $2<p<4,$ and conditions $\left( F1\right) -\left(
F2\right) $ and $\left( K1\right) -\left( K2\right) $ hold. Then for any $%
0<\lambda <\min \{\lambda _{3},\lambda _{4}\}$ and $n\geq n_{0},$ there holds%
\begin{equation*}
I_{\lambda }(v_{\lambda }-w_{n})>\sup_{\left( s,t\right) \in \partial
\left\{ \left[ 0,\widehat{s}_{\lambda }\right] \times \left[ 0,\widehat{t}%
_{n}\right] \right\} }I_{\lambda }(sv_{\lambda }-tw_{n}),
\end{equation*}%
where $\widehat{s}_{\lambda }$ and $\widehat{t}_{n}$ are defined in Lemmas %
\ref{h-2} and \ref{h-3}, respectively. Furthermore, we have
\begin{equation*}
\lim_{n\rightarrow \infty }I_{\lambda }(v_{\lambda }-w_{n})=\alpha _{\lambda
}^{-}+\alpha _{\lambda }^{\infty }.
\end{equation*}
\end{lemma}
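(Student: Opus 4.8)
The plan is to reduce both assertions to one splitting estimate. I claim that, uniformly for $(s,t)$ in the fixed compact square $[0,(p/(4-p))^{1/(p-2)}]^2$, which contains every rectangle $Q_n:=[0,\widehat{s}_\lambda]\times[0,\widehat{t}_n]$,
\begin{equation*}
I_\lambda(sv_\lambda-tw_n)=I_\lambda(sv_\lambda)+I_\lambda(tw_n)+o(1)\quad\text{as }n\rightarrow\infty.
\end{equation*}
To prove this I would expand $I_\lambda(sv_\lambda-tw_n)$ term by term and show all mixed contributions are $o(1)$: the quadratic cross term $st\langle v_\lambda,w_n\rangle_{H^1}$ vanishes by the exponential decay estimates $(\ref{3-1})$ and $(\ref{3-11})$; the mixed Poisson contributions vanish by Lemma \ref{h-1} (part $(ii)$ when $s=t=1$, and part $(i)$ after rescaling via Lemma \ref{h1}$(iii)$ in general); and, since $v_\lambda,w_n\geq0$ have essentially disjoint supports, the elementary inequality $\big||a-b|^p-a^p-b^p\big|\leq C_p(a^{p-1}b+ab^{p-1})$ reduces the nonlinear cross term to integrals $\int f v_\lambda^{p-1}w_n$ and $\int f v_\lambda w_n^{p-1}$, which tend to $0$ by the same decay/separation argument as in Lemma \ref{h-1}. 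Since $s,t$ stay bounded, these bounds are uniform in $(s,t)$.

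Granting the splitting estimate, the second assertion is the special case $s=t=1$: it yields $I_\lambda(v_\lambda-w_n)=\alpha_\lambda^-+I_\lambda(w_n)+o(1)$, so it remains to check $I_\lambda(w_n)\rightarrow\alpha_\lambda^\infty$. For this I would use translation invariance of $\|\cdot\|_{H^1}$, hypothesis $(F1)$ (so that $\int f|w_n|^p\rightarrow f_\infty\int|w_\lambda^\infty|^p$ by dominated convergence), and hypothesis $(K2)$ together with the identity $\int K\phi_{K,w_n}w_n^2-\int K_\infty\phi_{K_\infty,w_n}w_n^2=o(1)$ recorded in the proof of Lemma \ref{h-3}; these identify the limit with $I_\lambda^\infty(w_\lambda^\infty)=\alpha_\lambda^\infty$. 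Hence $I_\lambda(v_\lambda-w_n)\rightarrow\alpha_\lambda^-+\alpha_\lambda^\infty$.

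For the boundary inequality I would evaluate the splitting estimate along the four edges of $Q_n$. On $\{s=0\}$ and $\{t=0\}$ there are no cross terms, so the values are exactly $I_\lambda(tw_n)$ and $I_\lambda(sv_\lambda)$, bounded above by $I_\lambda(w_n)$ and $I_\lambda(v_\lambda)=\alpha_\lambda^-$ respectively, by Lemmas \ref{h-3} and \ref{h-2}. On $\{s=\widehat{s}_\lambda\}$ the estimate and Lemma \ref{h-2} give the bound $I_\lambda(\widehat{s}_\lambda v_\lambda)+I_\lambda(w_n)+o(1)$ with $I_\lambda(\widehat{s}_\lambda v_\lambda)<0$, while on $\{t=\widehat{t}_n\}$ they give $\alpha_\lambda^-+I_\lambda(\widehat{t}_n w_n)+o(1)$ with $I_\lambda(\widehat{t}_n w_n)<0$ by Lemma \ref{h-3}. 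Combining these four estimates gives $\limsup_{n\rightarrow\infty}\sup_{\partial Q_n}I_\lambda(sv_\lambda-tw_n)\leq\max\{\alpha_\lambda^-,\alpha_\lambda^\infty\}$, whereas $I_\lambda(v_\lambda-w_n)\rightarrow\alpha_\lambda^-+\alpha_\lambda^\infty$ by the previous step. Since $\alpha_\lambda^-,\alpha_\lambda^\infty>0$, the positive gap $\min\{\alpha_\lambda^-,\alpha_\lambda^\infty\}$ yields $I_\lambda(v_\lambda-w_n)>\sup_{\partial Q_n}I_\lambda(sv_\lambda-tw_n)$ for large $n$, and I would enlarge $n_0$ if necessary so this holds for all $n\geq n_0$.

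The hard part will be making the splitting estimate genuinely uniform in $(s,t)$ and controlling the nonlinear cross term $\int f|sv_\lambda-tw_n|^p$, where the argument changes sign; the sign change is harmless because $v_\lambda$ and $w_n$ concentrate in far-apart regions, but justifying this cross-term bound cleanly is the delicate point, as opposed to the quadratic and Poisson pieces, which are already furnished by $(\ref{3-1})$ and Lemma \ref{h-1}. A secondary care point is that both $\widehat{t}_n$ and $I_\lambda(w_n)$ depend on $n$: one must keep them inside the fixed compact square and exploit their convergence so that the $o(1)$ terms do not absorb the gap $\min\{\alpha_\lambda^-,\alpha_\lambda^\infty\}$.
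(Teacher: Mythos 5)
Your proposal is correct and follows essentially the same route as the paper: split $I_{\lambda}(sv_{\lambda}-tw_{n})$ asymptotically into $I_{\lambda}(sv_{\lambda})+I_{\lambda}^{\infty}(tw_{\lambda}^{\infty})+o(1)$ uniformly on the rectangle (using the exponential decay, Lemma \ref{h-1} and Brezis--Lieb), identify the interior value at $(1,1)$ with $\alpha_{\lambda}^{-}+\alpha_{\lambda}^{\infty}$, and bound the boundary supremum by $\max\{\alpha_{\lambda}^{-},\alpha_{\lambda}^{\infty}\}+o(1)$ via the maximality properties and the negativity of $I_{\lambda}(\widehat{s}_{\lambda}v_{\lambda})$ and $I_{\lambda}(\widehat{t}_{n}w_{n})$ from Lemmas \ref{h-2} and \ref{h-3}. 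Your explicit remark that $n_{0}$ may need to be enlarged to absorb the $o(1)$ terms is a fair (and slightly more careful) reading of what the paper leaves implicit.
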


\begin{proof}
Note that $1<\left( \frac{p}{2}\right) ^{\frac{1}{p-2}}<\widehat{s}_{\lambda
},\widehat{t}_{n}<\left( \frac{p}{4-p}\right) ^{\frac{1}{p-2}}$ for all $%
0<\lambda <\min \{\lambda _{3},\lambda _{4}\}$ and $n\geq n_{0}$ by Lemmas %
\ref{h-2} and \ref{h-3}. Then for all $\left( s,t\right) \in \left[ 0,%
\widehat{s}_{\lambda }\right] \times \left[ 0,\widehat{t}_{n}\right] ,$ by
virtue of Lemma \ref{h-1} and $(\ref{3-8}),$ we have%
\begin{eqnarray}
&&\int_{\mathbb{R}^{3}}K(x)\phi _{K,\left( sv_{\lambda }-tw_{n}\right)
}\left( sv_{\lambda }-tw_{n}\right) ^{2}dx  \notag \\
&=&s^{4}\int_{\mathbb{R}^{3}}K(x)\phi _{K,v_{\lambda }}v_{\lambda
}^{2}dx+t^{4}\int_{\mathbb{R}^{3}}K_{\infty }\phi _{K_{\infty },w_{\lambda
}^{\infty }}\left( w_{\lambda }^{\infty }\right) ^{2}dx+o(1).  \label{5-8}
\end{eqnarray}%
Moreover, using the fact of $w_{n}\rightarrow 0$ a.e. in $\mathbb{R}^{3}$
and \cite[Brezis-Lieb Lemma]{BLi} gives%
\begin{equation}
\left\Vert sv_{\lambda }-tw_{n}\right\Vert _{H^{1}}^{2}=s^{2}\left\Vert
v_{\lambda }\right\Vert _{H^{1}}^{2}+t^{2}\left\Vert w_{n}\right\Vert
_{H^{1}}^{2}+o(1)  \label{5-2}
\end{equation}%
and%
\begin{equation}
\int_{\mathbb{R}^{3}}f(x)\left\vert sv_{\lambda }-tw_{n}\right\vert
^{p}dx=s^{p}\int_{\mathbb{R}^{3}}f(x)\left\vert v_{\lambda }\right\vert
^{p}dx+t^{p}\int_{\mathbb{R}^{3}}f_{\infty }\left\vert w_{\lambda }^{\infty
}\right\vert ^{p}dx+o(1).  \label{5-9}
\end{equation}%
It follows from $\left( \ref{5-8}\right) -\left( \ref{5-9}\right) $ that%
\begin{eqnarray*}
I_{\lambda }\left( v_{\lambda }-w_{n}\right)  &=&\frac{1}{2}\left\Vert
v_{\lambda }-w_{n}\right\Vert _{H^{1}}^{2}+\frac{\lambda }{4}\int_{\mathbb{R}%
^{3}}K(x)\phi _{K,\left( v_{\lambda }-w_{n}\right) }\left( v_{\lambda
}-w_{n}\right) ^{2}dx \\
&&-\frac{1}{p}\int_{\mathbb{R}^{3}}f(x)\left\vert v_{\lambda
}-w_{n}\right\vert ^{p}dx \\
&=&\frac{1}{2}\left\Vert v_{\lambda }\right\Vert _{H^{1}}^{2}+\frac{\lambda
}{4}\int_{\mathbb{R}^{3}}K(x)\phi _{K,v_{\lambda }}v_{\lambda }^{2}dx-\frac{1%
}{p}\int_{\mathbb{R}^{3}}f(x)\left\vert v_{\lambda }\right\vert ^{p}dx \\
&&+\frac{1}{2}\left\Vert w_{\lambda }^{\infty }\right\Vert _{H^{1}}^{2}+%
\frac{\lambda }{4}\int_{\mathbb{R}^{3}}K_{\infty }\phi _{K_{\infty
},w_{\lambda }^{\infty }}\left( w_{\lambda }^{\infty }\right) ^{2}dx-\frac{1%
}{p}\int_{\mathbb{R}^{3}}f_{\infty }\left\vert w_{\lambda }^{\infty
}\right\vert ^{p}dx+o\left( 1\right)  \\
&=&\alpha _{\lambda }^{-}+\alpha _{\lambda }^{\infty }+o(1).
\end{eqnarray*}%
Thus, by Lemmas \ref{h-2} and \ref{h-3}, for all $0<\lambda <\min \{\lambda
_{3},\lambda _{4}\}$ and $n\geq n_{0},$ there holds%
\begin{equation*}
I_{\lambda }(v_{\lambda }-w_{n})>\sup_{\left( s,t\right) \in D}I_{\lambda
}(sv_{\lambda }-tw_{n}),
\end{equation*}%
where $D=\left( \left[ 0,\widehat{s}_{\lambda }\right] \times \left\{
0\right\} \right) \cup \left( \left\{ 0\right\} \times \left[ 0,\widehat{t}%
_{n}\right] \right) .$ Similarly, we also get%
\begin{equation*}
I_{\lambda }(v_{\lambda }-w_{n})>\sup_{t\in \left[ 0,\widehat{t}_{n}\right]
}I_{\lambda }(\widehat{s}_{\lambda }v_{\lambda }-tw_{n})
\end{equation*}%
and%
\begin{equation*}
I_{\lambda }(v_{n}-w_{n})>\sup_{s\in \left[ 0,\widehat{s}_{\lambda }\right]
}I_{\lambda }\left( sv_{n}-\widehat{t}_{n}w_{n}\right) .
\end{equation*}%
These imply that
\begin{equation*}
I_{\lambda }\left( v_{\lambda }-w_{n}\right) >\sup_{\left( s,t\right) \in
\partial \left\{ \left[ 0,\widehat{s}_{\lambda }\right] \times \left[ 0,%
\widehat{t}_{n}\right] \right\} }I_{\lambda }(sv_{n}-tw_{n}).
\end{equation*}%
This completes the proof.
\end{proof}

For all $\left( s,t\right) \in \left[ 0,\widehat{s}_{\lambda }\right] \times %
\left[ 0,\widehat{t}_{n}\right] ,$ we define%
\begin{eqnarray*}
\widehat{h}\left( s,t\right)  &=&I_{\lambda }\left( sv_{\lambda
}-tw_{n}\right)  \\
&=&\frac{1}{2}\left\Vert sv_{\lambda }-tw_{n}\right\Vert _{H^{1}}^{2}+\frac{%
\lambda }{4}\int_{\mathbb{R}^{3}}K(x)\phi _{K,\left( sv_{\lambda
}-tw_{n}\right) }\left( sv_{\lambda }-tw_{n}\right) ^{2}dx \\
&&-\frac{1}{p}\int_{\mathbb{R}^{3}}f(x)\left\vert sv_{\lambda
}-tw_{n}\right\vert ^{p}dx.
\end{eqnarray*}%
A direct calculation shows that%
\begin{eqnarray*}
\frac{\partial }{\partial s}\widehat{h}\left( s,t\right)  &=&\left\langle
I_{\lambda }^{\prime }\left( sv_{\lambda }-tw_{n}\right) ,v_{\lambda
}\right\rangle  \\
&=&s\left\Vert v_{\lambda }\right\Vert _{H^{1}}^{2}+\lambda st^{2}\int_{%
\mathbb{R}^{3}}K(x)\phi _{K,w_{n}}v_{\lambda }^{2}dx+\lambda s^{3}\int_{%
\mathbb{R}^{3}}K(x)\phi _{K,v_{n}}v_{\lambda }^{2}dx \\
&&-s^{p-1}\int_{\mathbb{R}^{3}}f(x)\left\vert v_{\lambda }\right\vert
^{p}dx+o(1)
\end{eqnarray*}%
and%
\begin{eqnarray*}
\frac{\partial }{\partial t}\widehat{h}\left( s,t\right)  &=&\left\langle
I_{\lambda }^{\prime }\left( sv_{\lambda }-tw_{n}\right)
,-w_{n}\right\rangle  \\
&=&t\left\Vert w_{n}\right\Vert _{H^{1}}^{2}+\lambda s^{2}t\int_{\mathbb{R}%
^{3}}K(x)\phi _{K,v_{\lambda }}w_{n}^{2}dx+\lambda t^{3}\int_{\mathbb{R}%
^{3}}K(x)\phi _{K,w_{n}}w_{n}^{2}dx \\
&&-t^{p-1}\int_{\mathbb{R}^{3}}f(x)\left\vert w_{n}\right\vert ^{p}dx+o(1).
\end{eqnarray*}%
Then we have the following result.

\begin{proposition}
\label{h-5}Suppose that $2<p<4,$ and conditions $\left( F1\right) -\left(
F2\right) $ and $\left( K1\right) -\left( K2\right) $ hold. Then there exist
two positive numbers $\lambda ^{\ast }\leq \min \{\lambda _{3},\lambda _{4}\}
$ and $n^{\ast }\in \mathbb{N}$ such that for every $0<\lambda <\lambda
^{\ast }$ and $n\geq n^{\ast },$ there exists $\left( s_{\lambda }^{\ast
},t_{n}^{\ast }\right) \in \left( 0,\widehat{s}_{\lambda }\right) \times
\left( 0,\widehat{t}_{n}\right) $ such that
\begin{equation*}
I_{\lambda }\left( s_{\lambda }^{\ast }v_{\lambda }-t_{n}^{\ast
}w_{n}\right) =\sup_{\left( s,t\right) \in \left[ 0,\widehat{s}_{\lambda }%
\right] \times \left[ 0,\widehat{t}_{n}\right] }I_{\lambda }\left(
sv_{\lambda }-tw_{n}\right) <\alpha _{\lambda }^{-}+\alpha _{\lambda
}^{\infty },
\end{equation*}%
and $s_{\lambda }^{\ast }v_{\lambda }-t_{n}^{\ast }w_{n}\in \mathbf{N}%
_{\lambda }^{(1)}.$
\end{proposition}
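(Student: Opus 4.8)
The plan is to produce the nodal function as an interior maximum of $\widehat h(s,t)=I_\lambda(sv_\lambda-tw_n)$ over the rectangle $Q=[0,\widehat s_\lambda]\times[0,\widehat t_n]$, and to extract from it both the Nehari relations and the energy bound. Since $\widehat h$ is continuous and $Q$ is compact, the maximum is attained at some $(s_\lambda^\ast,t_n^\ast)\in Q$. By Lemma \ref{h-4}, $\widehat h(1,1)=I_\lambda(v_\lambda-w_n)>\sup_{\partial Q}\widehat h$, and since $1<\widehat s_\lambda,\widehat t_n$ (Lemmas \ref{h-2} and \ref{h-3}) the point $(1,1)$ is interior; as the maximum is $\ge\widehat h(1,1)$, it cannot be attained on $\partial Q$, so $(s_\lambda^\ast,t_n^\ast)\in(0,\widehat s_\lambda)\times(0,\widehat t_n)$.

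At this interior maximum $\partial_s\widehat h=\partial_t\widehat h=0$, i.e. $\langle I_\lambda'(u),v_\lambda\rangle=\langle I_\lambda'(u),w_n\rangle=0$ for $u:=s_\lambda^\ast v_\lambda-t_n^\ast w_n$; the combination $s_\lambda^\ast\langle I_\lambda'(u),v_\lambda\rangle-t_n^\ast\langle I_\lambda'(u),w_n\rangle=\langle I_\lambda'(u),u\rangle=0$ gives $u\in\mathbf M_\lambda$. Since $v_\lambda,w_n>0$ concentrate near $0$ and $ne_1$ and decay as in (\ref{3-1}), $u>0$ near $0$ and $u<0$ near $ne_1$, so $u^\pm\not\equiv0$; moreover $u^+-s_\lambda^\ast v_\lambda$ and $u^-+t_n^\ast w_n$ are supported in the overlap region with $H^1$--norm $O(e^{-n})$, so the two relations above coincide with the nodal identities $\langle I_\lambda'(u),u^\pm\rangle=0$ up to an $O(e^{-n})$ error. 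To obtain these identities exactly I would instead maximize $(s,t)\mapsto I_\lambda\big(s(v_\lambda-w_n)^++t(v_\lambda-w_n)^-\big)$: at an interior critical point the positive and negative parts of $s(v_\lambda-w_n)^++t(v_\lambda-w_n)^-$ are exactly $s(v_\lambda-w_n)^+$ and $t(v_\lambda-w_n)^-$, so the critical point lies in $\mathbf N_\lambda$ by construction, and it differs from $sv_\lambda-tw_n$ only by exponentially small terms.

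To place $u$ in the bounded branch $\mathbf M_\lambda^{(1)}$, note $I_\lambda(u)=\max_Q\widehat h<\alpha_\lambda^-+\alpha_\lambda^\infty<2\alpha_\lambda^\infty$, and by (\ref{3-9}) $2\alpha_\lambda^\infty\to\frac{p-2}{p}(S_p^p/f_\infty)^{2/(p-2)}$, which is below the filtration level $C(p)(S_p^p/f_\infty)^{2/(p-2)}$ because $C(p)>\frac{p-2}{p}$; hence for $\lambda$ small $u$ lies in $\mathbf M_\lambda(C(p)(\cdots))=\mathbf M_\lambda^{(1)}\cup\mathbf M_\lambda^{(2)}$. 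The branch is decided by the sign of $h_u''(1)$: since the unperturbed map $(s,t)\mapsto I_\lambda(sv_\lambda)+I_\lambda^\infty(tw_\lambda^\infty)$ has a strict nondegenerate maximum at $(1,1)$ and $\widehat h$ differs from it by $o(1)$ in $C^1(Q)$, we get $(s_\lambda^\ast,t_n^\ast)\to(1,1)$, so each $\|u^\pm\|_{H^1}$ is close to $\|v_\lambda\|_{H^1},\|w_n\|_{H^1}<(2S_p^p/((4-p)f_{\max}))^{1/(p-2)}$; then (\ref{2-2}) gives $h_u''(1)<0$, which excludes $\mathbf M_\lambda^{(2)}$ and yields $u\in\mathbf M_\lambda^{(1)}$, hence $u\in\mathbf N_\lambda^{(1)}$. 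It is precisely the bound on $f_{\max}$ in $(F1)$ that makes $D_1$ large enough for a two-bump function to have norm below $D_1$.

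The crux is the strict bound $\max_Q\widehat h<\alpha_\lambda^-+\alpha_\lambda^\infty$, which Lemma \ref{h-4} does not provide (it only yields $\widehat h(1,1)=\alpha_\lambda^-+\alpha_\lambda^\infty+o(1)$). I would split $\widehat h(s,t)=I_\lambda(sv_\lambda)+I_\lambda(tw_n)+\mathcal C_n(s,t)$, where $\mathcal C_n$ gathers the $H^1$ overlap $-st\langle v_\lambda,w_n\rangle_{H^1}$, the mixed Poisson integrals bounded in Lemma \ref{h-1}, and the nonlinear remainder, all $O(e^{-n})$ uniformly on $Q$. By Lemmas \ref{h-2} and \ref{h-3}, $\sup_sI_\lambda(sv_\lambda)=\alpha_\lambda^-$ and $\sup_tI_\lambda(tw_n)=I_\lambda(w_n)$, so the decisive estimate is $I_\lambda(w_n)\le\alpha_\lambda^\infty-ce^{-n^{r_f}}$. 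Writing $I_\lambda(w_n)=I_\lambda^\infty(w_n)-\frac1p\int(f-f_\infty)|w_n|^p+\frac\lambda4\int(K\phi_{K,w_n}-K_\infty\phi_{K_\infty,w_n})w_n^2$, the first term is $\alpha_\lambda^\infty$ by translation invariance, the last is $\le0$ by $(K2)$, and $(F2)$ together with the subadditivity $|y+ne_1|^{r_f}\le|y|^{r_f}+n^{r_f}$ gives $\int(f-f_\infty)|w_n|^p\ge d_0e^{-n^{r_f}}\int e^{-|y|^{r_f}}|w_\lambda^\infty|^p\,dy$. Hence $\max_Q\widehat h\le\alpha_\lambda^-+\alpha_\lambda^\infty-ce^{-n^{r_f}}+Ce^{-n}$, and since $r_f<1$ makes $e^{-n^{r_f}}\gg e^{-n}$, the right-hand side is $<\alpha_\lambda^-+\alpha_\lambda^\infty$ for $n$ large. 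The main difficulty is exactly this competition of exponential rates, in which the slow decay $r_f<1$ of the gain from $f\ge f_\infty$ must dominate the $e^{-n}$ interaction; choosing $\lambda^\ast$ small and $n^\ast$ large to validate the branch argument and the two exponential estimates then completes the proof.
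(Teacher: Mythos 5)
Your proposal is correct in its overall architecture and coincides with the paper's proof in the two main steps: the interior maximum of $\widehat h$ over $Q=[0,\widehat s_\lambda]\times[0,\widehat t_n]$ obtained from the boundary estimate of Lemma \ref{h-4}, and the strict bound $\max_Q\widehat h<\alpha_\lambda^-+\alpha_\lambda^\infty$ coming from the competition between the $(F2)$-gain of order $e^{-n^{r_f}}$ and the exponentially small interaction terms. Two of your sub-arguments differ from the paper's. For the strict inequality, you route the gain through $\sup_t I_\lambda(tw_n)=I_\lambda(w_n)\le\alpha_\lambda^\infty-ce^{-n^{r_f}}$ (using $(K2)$ to make the Poisson correction nonpositive), whereas the paper estimates $I_\lambda(s_\lambda^\ast v_\lambda-t_n^\ast w_n)$ directly at the maximizer via the pointwise inequality $|c-d|^p>c^p+d^p-C_\ast(p)(c^{p-1}d+cd^{p-1})$ and Lemmas \ref{h-1}, \ref{h-2}, \ref{h-3}; both work, but note that the interaction terms decay like $e^{-n^{1-\varepsilon}}$ (not $e^{-n}$ as you write), which is why the paper must choose $\varepsilon<1-r_f$ — your final inequality survives this correction. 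For excluding $\mathbf M_\lambda^{(2)}$, you argue $h_u''(1)<0$ from $\|u^\pm\|_{H^1}$ being below $(2S_p^p/((4-p)f_{\max}))^{1/(p-2)}$; this is valid only after splitting $\int f|u|^p$ into its $u^+$ and $u^-$ contributions (the bound on $\|u\|_{H^1}$ itself need not hold), and it leans on $(s_\lambda^\ast,t_n^\ast)\to(1,1)$, which requires a separate uniqueness-of-maximum argument in the style of Lemma \ref{h3-3}. The paper's route is cleaner: on $\mathbf M_\lambda^{(2)}$ the identities (\ref{2-1}) and (\ref{2-4}) force $I_\lambda(u)\le\frac{\lambda(4-p)}{4p}\overline S^{-2}S_{12/5}^{-4}K_{\max}C_0^2<\alpha_\lambda^\infty$ for $\lambda$ small, contradicting $I_\lambda(u)\ge I_\lambda(v_\lambda-w_n)=\alpha_\lambda^-+\alpha_\lambda^\infty+o(1)$, with no need to locate $(s_\lambda^\ast,t_n^\ast)$.

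You also correctly flag a point the paper passes over: the stationarity relations $\langle I_\lambda'(u),v_\lambda\rangle=\langle I_\lambda'(u),w_n\rangle=0$ are not literally the nodal identities $\langle I_\lambda'(u),u^\pm\rangle=0$, since $v_\lambda$ and $w_n$ are everywhere positive and hence $u^+\neq s_\lambda^\ast v_\lambda$. Your suggested repair (maximizing over $s(v_\lambda-w_n)^++t(v_\lambda-w_n)^-$) is a reasonable way to make this exact at the cost of re-deriving the estimates for functions that differ from $sv_\lambda$ and $tw_n$ by exponentially small amounts; the paper simply asserts the conclusion from (\ref{5-17})--(\ref{5-18}). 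This is a genuine refinement on your part rather than a defect.
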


\begin{proof}
It follows from Lemmas \ref{h-2} and \ref{h-3} that $1<\left( \frac{p}{2}%
\right) ^{\frac{1}{p-2}}<\widehat{s}_{\lambda },\widehat{t}_{n}<\left( \frac{%
p}{4-p}\right) ^{\frac{1}{p-2}}$ for all $0<\lambda <\min \{\lambda
_{3},\lambda _{4}\}$ and $n\geq n_{0}.$ By Lemma \ref{h-4}, there exists $%
\left( s_{\lambda }^{\ast },t_{n}^{\ast }\right) \in \left( 0,\widehat{s}%
_{\lambda }\right) \times \left( 0,\widehat{t}_{n}\right) $ such that%
\begin{equation*}
I_{\lambda }\left( s_{\lambda }^{\ast }v_{\lambda }-t_{n}^{\ast
}w_{n}\right) =\sup_{\left( s,t\right) \in \left[ 0,\widehat{s}_{\lambda }%
\right] \times \left[ 0,\widehat{t}_{n}\right] }I_{\lambda }(sv_{\lambda
}-tw_{n})\geq I_{\lambda }(v_{\lambda }-w_{n}),
\end{equation*}%
and
\begin{eqnarray}
\frac{\partial }{\partial s}\widehat{h}\left( s_{\lambda }^{\ast
},t_{n}^{\ast }\right) &=&\left\langle I_{\lambda }^{\prime }\left(
s_{\lambda }^{\ast }v_{\lambda }-t_{n}^{\ast }w_{n}\right) ,v_{\lambda
}\right\rangle =0,  \label{5-17} \\
\frac{\partial }{\partial t}\widehat{h}\left( s_{\lambda }^{\ast
},t_{n}^{\ast }\right) &=&\left\langle I_{\lambda }^{\prime }\left(
s_{\lambda }^{\ast }v_{\lambda }-t_{n}^{\ast }w_{n}\right)
,-w_{n}\right\rangle =0.  \label{5-18}
\end{eqnarray}%
This implies that
\begin{equation*}
\left\langle I_{\lambda }^{\prime }\left( s_{\lambda }^{\ast }v_{\lambda
}-t_{n}^{\ast }w_{n}\right) ,s_{\lambda }^{\ast }v_{\lambda }-t_{n}^{\ast
}w_{n}\right\rangle =0,
\end{equation*}%
i.e., $s_{\lambda }^{\ast }v_{\lambda }-t_{n}^{\ast }w_{n}\in \mathbf{M}%
_{\lambda }.$

Next, we show that%
\begin{equation}
I_{\lambda }\left( s_{\lambda }^{\ast }v_{\lambda }-t_{n}^{\ast
}w_{n}\right) =\sup_{\left( s,t\right) \in \left[ 0,\widehat{s}_{\lambda }%
\right] \times \left[ 0,\widehat{t}_{n}\right] }I_{\lambda }\left(
sv_{\lambda }-tw_{n}\right) <\alpha _{\lambda }^{-}+\alpha _{\lambda
}^{\infty }.  \label{5-20}
\end{equation}%
By virtue of Lemma \ref{h-1}, one has%
\begin{eqnarray}
I_{\lambda }\left( s_{\lambda }^{\ast }v_{\lambda }-t_{n}^{\ast
}w_{n}\right) &=&\frac{1}{2}\left\Vert s_{\lambda }^{\ast }v_{\lambda
}-t^{\ast }w_{n}\right\Vert _{H^{1}}^{2}-\frac{1}{p}\int_{\mathbb{R}%
^{3}}f(x)\left\vert s_{\lambda }^{\ast }v_{\lambda }-t^{\ast
}w_{n}\right\vert ^{p}dx  \notag \\
&&+\frac{\lambda }{4}\int_{\mathbb{R}^{3}}K(x)\phi _{K,\left( s_{\lambda
}^{\ast }v_{\lambda }-t^{\ast }w_{n}\right) }\left( s_{\lambda }^{\ast
}v_{\lambda }-t^{\ast }w_{n}\right) ^{2}dx  \notag \\
&\leq &I_{\lambda }\left( s_{\lambda }^{\ast }v_{\lambda }\right)
+I_{\lambda }^{\infty }\left( t_{n}^{\ast }w_{n}\right) +\frac{\lambda }{4}%
C_{\varepsilon }^{2}\exp \left( -2n^{1-\varepsilon }\right)  \notag \\
&&-\frac{\left( t_{n}^{\ast }\right) ^{p}}{p}\int_{\mathbb{R}^{3}}(f\left(
x\right) -f_{\infty })\left\vert w_{n}\right\vert ^{p}dx  \notag \\
&&-\frac{1}{p}\int_{\mathbb{R}^{3}}f(x)\left( \left\vert s_{\lambda }^{\ast
}v_{\lambda }-t_{n}^{\ast }w_{n}\right\vert ^{p}-\left\vert s_{\lambda
}^{\ast }v_{\lambda }\right\vert ^{p}-\left\vert t_{n}^{\ast
}w_{n}\right\vert ^{p}\right) dx.  \label{5-3}
\end{eqnarray}%
Since $I_{\lambda }(v_{\lambda })=\sup_{0\leq t\leq \widehat{s}_{\lambda
}}I_{\lambda }(sv_{\lambda })$ and $I_{\lambda }^{\infty
}(w_{n})=\sup_{0\leq t\leq \widehat{t}_{n}}I_{\lambda }^{\infty }(tw_{n}),$
we have
\begin{equation}
I_{\lambda }\left( s_{\lambda }^{\ast }v_{\lambda }\right) \leq \alpha
_{\lambda }^{-}\text{ and }I_{\lambda }^{\infty }\left( t_{n}^{\ast
}w_{n}\right) \leq \alpha _{\lambda }^{\infty }.  \label{5-16}
\end{equation}%
Using the inequality%
\begin{equation*}
\left\vert c-d\right\vert ^{p}>c^{p}+d^{p}-C_{\ast }\left( p\right) \left(
c^{p-1}d+cd^{p-1}\right)
\end{equation*}%
for all $c,d>0$ and for some constant $C^{\ast }\left( p\right) >0,$
together with $(\ref{5-3})-(\ref{5-16}),$ leads to%
\begin{eqnarray}
I_{\lambda }\left( s_{\lambda }^{\ast }v_{\lambda }-t_{n}^{\ast
}w_{n}\right) &\leq &\alpha _{\lambda }^{-}+\alpha _{\lambda }^{\infty }+%
\frac{\lambda }{4}C_{\varepsilon }^{2}\exp \left( -2n^{1-\varepsilon
}\right) -\frac{\left( t_{n}^{\ast }\right) ^{p}}{p}\int_{\mathbb{R}%
^{3}}(f\left( x\right) -f_{\infty })\left\vert w_{n}\right\vert ^{p}dx
\notag \\
&&+\frac{C^{\ast }\left( p\right) }{p}\int_{\mathbb{R}^{3}}f\left( x\right)
\left( \left\vert s_{\lambda }^{\ast }v_{\lambda }\right\vert
^{p-1}t_{n}^{\ast }w_{n}+s_{\lambda }^{\ast }v_{\lambda }\left\vert
t_{n}^{\ast }w_{n}\right\vert ^{p-1}\right) dx  \notag \\
&\leq &\alpha _{\lambda }^{-}+\alpha _{\lambda }^{\infty }+\frac{\lambda }{4}%
C_{\varepsilon }^{2}\exp \left( -2n^{1-\varepsilon }\right) -\frac{\left(
t_{n}^{\ast }\right) ^{p}}{p}\int_{\mathbb{R}^{3}}(f\left( x\right)
-f_{\infty })\left\vert w_{n}\right\vert ^{p}dx  \notag \\
&&+\frac{C^{\ast }\left( p\right) }{p}\left( \frac{p}{4-p}\right) ^{\frac{p}{%
p-2}}\int_{\mathbb{R}^{3}}f\left( x\right) \left( \left\vert v_{\lambda
}\right\vert ^{p-1}w_{n}+v_{\lambda }\left\vert w_{n}\right\vert
^{p-1}\right) dx.  \label{5-19}
\end{eqnarray}%
By condition $(F2),$ one has%
\begin{eqnarray}
\int_{\mathbb{R}^{3}}(f\left( x\right) -f_{\infty })\left\vert
w_{n}\right\vert ^{p}dx &\geq &d_{0}\int_{\mathbb{R}^{3}}\exp \left(
-\left\vert x+ne_{1}\right\vert ^{r_{f}}\right) (w_{\lambda }^{\infty
})^{p}\left( x\right) dx  \notag \\
&\geq &\left( \min_{x\in B_{1}\left( 0\right) }\left( w_{\lambda }^{\infty
}\right) ^{p}\right) \int_{B_{1}\left( 0\right) }\exp \left( -\left\vert
x+ne_{1}\right\vert ^{r_{f}}\right) dx  \notag \\
&\geq &\left( \min_{x\in B_{1}\left( 0\right) }\left( w_{\lambda }^{\infty
}\right) ^{p}\right) d_{0}\int_{B_{1}\left( 0\right) }\exp \left(
-\left\vert x\right\vert ^{r_{f}}-|e_{1}|n^{r_{f}}\right) dx  \notag \\
&=&\left( \min_{x\in B_{1}\left( 0\right) }\left( w_{\lambda }^{\infty
}\right) ^{p}\right) D_{0}\exp \left( -n^{r_{f}}\right) .  \label{5-10}
\end{eqnarray}%
Moreover, by \cite[Lemma 4.6]{Lin}, there exists $n_{1}>0$ such that for all
$n>n_{1},$%
\begin{eqnarray}
\int_{\mathbb{R}^{3}}f\left( x\right) \left\vert v_{\lambda }\right\vert
^{p-1}w_{n}dx &\leq &f_{\max }\int_{\mathbb{R}^{3}}\exp \left( -\left(
p-1\right) |x|^{1-\varepsilon }\right) \exp \left(
-|x-e_{1}n|^{1-\varepsilon }\right) dx  \notag \\
&\leq &\overline{C}_{\varepsilon ,1}\exp \left( -n^{1-\varepsilon }\right)
\text{ for some }\overline{C}_{\varepsilon ,1}>0.  \label{5-11}
\end{eqnarray}%
Similarly, we also obtain that there exists $n_{2}>0$ such that for all $%
n>n_{2},$%
\begin{equation}
\int_{\mathbb{R}^{3}}f\left( x\right) v_{\lambda }\left\vert
w_{n}\right\vert ^{p-1}dx\leq \overline{C}_{\varepsilon ,2}\exp \left(
-n^{1-\varepsilon }\right) \text{ for some }\overline{C}_{\varepsilon ,2}>0.
\label{5-12}
\end{equation}%
Hence, by $(\ref{5-19})-(\ref{5-12}),$ we may take $0<\varepsilon <1-r_{f}$
and $n^{\ast }\geq \max \left\{ n_{0},n_{1},n_{2}\right\} $ such that for
every $0<\lambda <\min \{\lambda _{3},\lambda _{4}\}$ and $n\geq n^{\ast },$
there holds%
\begin{eqnarray*}
I_{\lambda }\left( s_{\lambda }^{\ast }v_{\lambda }-t_{n}^{\ast
}w_{n}\right) &\leq &\alpha _{\lambda }^{-}+\alpha _{\lambda }^{\infty }+%
\overline{C}_{\varepsilon }\exp \left( -n^{1-\varepsilon }\right) -C_{0}\exp
\left( -n^{r_{f}}\right) \\
&<&\alpha _{\lambda }^{-}+\alpha _{\lambda }^{\infty },
\end{eqnarray*}%
where $\overline{C}_{\varepsilon }$ and $C_{0}$ are two positive constants.

Finally, we claim that $s_{\lambda }^{\ast }v_{\lambda }-t_{n}^{\ast
}w_{n}\in \mathbf{N}_{\lambda }^{(1)}.$ Note that%
\begin{equation*}
C\left( p\right) >\left\{
\begin{array}{ll}
\frac{\sqrt{e}\left( p-2\right) }{p}, & \text{ if }2<p\leq 3, \\
\frac{e\left( p-2\right) }{2p}, & \text{ if }3<p<4.%
\end{array}%
\right.
\end{equation*}%
Then from $(\ref{3-9})-(\ref{3-10})$ and $(\ref{5-20})$ it follows that for $%
\lambda >0$ sufficiently small,
\begin{equation*}
I_{\lambda }\left( s_{\lambda }^{\ast }v_{\lambda }-t_{n}^{\ast
}w_{n}\right) <\alpha _{\lambda }^{-}+\alpha _{\lambda }^{\infty }<2\alpha
_{\lambda }^{\infty }<C\left( p\right) \left( \frac{S_{p}^{p}}{f_{\infty }}%
\right) ^{\frac{2}{p-2}},
\end{equation*}%
and so we can conclude that either $s_{\lambda }^{\ast }v_{\lambda
}-t_{n}^{\ast }w_{n}\in \mathbf{M}_{\lambda }^{(1)}$ or $s_{\lambda }^{\ast
}v_{\lambda }-t_{n}^{\ast }w_{n}\in \mathbf{M}_{\lambda }^{(2)}.$ If $%
s_{\lambda }^{\ast }v_{\lambda }-t_{n}^{\ast }w_{n}\in \mathbf{M}_{\lambda
}^{(2)},$ then by $(\ref{2-1})$ and $(\ref{2-4}),$ we have%
\begin{eqnarray}
I_{\lambda }\left( s_{\lambda }^{\ast }v_{\lambda }-t_{n}^{\ast
}w_{n}\right)  &=&\frac{p-2}{2p}\left\Vert s_{\lambda }^{\ast }v_{\lambda
}-t_{n}^{\ast }w_{n}\right\Vert _{H^{1}}^{2}  \notag \\
&&-\frac{\lambda (4-p)}{4p}\int_{\mathbb{R}^{3}}K\left( x\right) \phi
_{K,s_{\lambda }^{\ast }v_{\lambda }-t_{n}^{\ast }w_{n}}(s_{\lambda }^{\ast
}v_{\lambda }-t_{n}^{\ast }w_{n})^{2}dx  \notag \\
&<&\lambda \left( \frac{4-p}{2p}-\frac{4-p}{4p}\right) \int_{\mathbb{R}%
^{3}}K\left( x\right) \phi _{K,s_{\lambda }^{\ast }v_{\lambda }-t_{n}^{\ast
}w_{n}}(s_{\lambda }^{\ast }v_{\lambda }-t_{n}^{\ast }w_{n})^{2}dx  \notag \\
&=&\frac{\lambda (4-p)}{4p}\int_{\mathbb{R}^{3}}K(x)\phi _{K,s_{\lambda
}^{\ast }v_{\lambda }-t_{n}^{\ast }w_{n}}(s_{\lambda }^{\ast }v_{\lambda
}-t_{n}^{\ast }w_{n})^{2}dx  \notag \\
&\leq &\frac{\lambda (4-p)}{4p}\overline{S}^{-2}S_{12/5}^{-4}K_{\max
}\left\Vert s_{\lambda }^{\ast }v_{\lambda }-t_{n}^{\ast }w_{n}\right\Vert
_{H^{1}}^{4}.  \label{5-21}
\end{eqnarray}%
Moreover, using Lemmas $\ref{h-2}$ and $\ref{h-4}$, leads to%
\begin{equation}
I_{\lambda }\left( s_{\lambda }^{\ast }v_{\lambda }-t_{n}^{\ast
}w_{n}\right) \geq I_{\lambda }\left( v_{\lambda }-w_{n}\right) =\alpha
_{\lambda }^{-}+\alpha _{\lambda }^{\infty }+o(1)  \label{5-22}
\end{equation}%
and%
\begin{equation}
\left\Vert s_{\lambda }^{\ast }v_{\lambda }-t_{n}^{\ast }w_{n}\right\Vert
_{H^{1}}^{2}=\left( s_{\lambda }^{\ast }\right) ^{2}\left\Vert v_{\lambda
}\right\Vert _{H^{1}}^{2}+\left( t_{n}^{\ast }\right) ^{2}\left\Vert
w_{n}\right\Vert _{H^{1}}^{2}-2s_{\lambda }^{\ast }t_{n}^{\ast }\left\langle
v_{\lambda },w_{n}\right\rangle \leq C_{0}.  \label{5-23}
\end{equation}%
Thus, by $(\ref{5-21})-(\ref{5-23}),$ we can conclude that for $\lambda >0$
sufficiently small and $n\geq n^{\ast },$ there holds%
\begin{eqnarray*}
\alpha _{\lambda }^{-}+\alpha _{\lambda }^{\infty }+o\left( 1\right)
&<&I_{\lambda }\left( s_{\lambda }^{\ast }v_{\lambda }-t_{n}^{\ast
}w_{n}\right)  \\
&<&\frac{\lambda (4-p)}{4p}\overline{S}^{-2}S_{12/5}^{-4}K_{\max }\left\Vert
s_{\lambda }^{\ast }v_{\lambda }-t_{n}^{\ast }w_{n}\right\Vert _{H^{1}}^{4}
\\
&\leq &\frac{\lambda (4-p)}{4p}\overline{S}^{-2}S_{12/5}^{-4}K_{\max
}C_{0}^{2} \\
&<&\alpha _{\lambda }^{\infty },
\end{eqnarray*}%
which a contradiction. This indicates that there exists a positive number $%
\lambda ^{\ast }\leq \min \{\lambda _{3},\lambda _{4}\}$ such that $%
s_{\lambda }^{\ast }v_{\lambda }-t_{n}^{\ast }w_{n}\in \mathbf{M}_{\lambda
}^{(1)}$ for all $0<\lambda <\lambda ^{\ast }$ and $n\geq n^{\ast }.$
Combining $(\ref{5-17})$ and $(\ref{5-18})$ gives $s_{\lambda }^{\ast
}v_{\lambda }-t_{n}^{\ast }w_{n}\in \mathbf{N}_{\lambda }^{(1)}.$ This
completes the proof.
\end{proof}

Let $w_{0}$ be the unique positive solution of the following Schr\"{o}dinger
equation%
\begin{equation}
\begin{array}{ll}
-\Delta u+u=f_{\infty }\left\vert u\right\vert ^{p-2}u & \text{ in }\mathbb{R%
}^{3}.%
\end{array}
\tag*{$\left( E_{0}^{\infty }\right) $}
\end{equation}%
From \cite{K}, one can see that%
\begin{equation*}
I_{0}^{\infty }\left( w_{0}\right) =\alpha _{0}^{\infty }:=\frac{p-2}{2p}%
\left( \frac{S_{p}^{p}}{f_{\infty }}\right) ^{\frac{2}{p-2}},
\end{equation*}%
where $I_{0}^{\infty }$ is the energy functional of Eq. $(E_{0}^{\infty })$
in $H^{1}(\mathbb{R}^{3})$ in the form
\begin{equation*}
I_{0}^{\infty }\left( u\right) =\frac{1}{2}\left\Vert u\right\Vert
_{H^{1}}^{2}-\frac{1}{p}\int_{\mathbb{R}^{3}}f_{\infty }\left\vert
u\right\vert ^{p}dx.
\end{equation*}%
Moreover, by \cite{GNN}, for any $\varepsilon >0,$ there exist positive
numbers $A_{\varepsilon }$ and $B_{0}$ such that
\begin{equation}
A_{\varepsilon }\exp \left( -\left( 1+\varepsilon \right) \left\vert
x\right\vert \right) \leq w_{0}\leq B_{0}\exp \left( -\left\vert
x\right\vert \right) \text{ for all }x\in \mathbb{R}^{N}.  \label{45}
\end{equation}%
For $n\in \mathbb{N},$ we define the sequence
\begin{equation*}
\overline{w}_{n}(x)=w_{0}(x-ne_{1}).
\end{equation*}%
Clearly, $I_{0}^{\infty }\left( \overline{w}_{n}\right) =I_{0}^{\infty
}\left( w_{0}\right) $ for all $n\in \mathbb{N},$ where $I_{0}^{\infty }$ is
the energy functional of Eq. $(E_{0}^{\infty }).$ Moreover, by $(\ref{45})$
one has%
\begin{equation*}
\overline{w}_{n}\left( x\right) =w_{0}\left( x-ne_{1}\right) \leq
C_{\varepsilon }\exp (\left\vert x\right\vert -n).
\end{equation*}%
Note that conditions $(F1),(F2),(K1)$ and $(K3)$ satisfy conditions $%
(D1),(D2)$ and $(D4)$ in \cite[Theorem 1.5]{SWF1}. Then from \cite[Theorem
1.5]{SWF1}, we obtain that there exists $\overline{\Lambda }>0$ such that
for every $0<\lambda <\overline{\Lambda },$ system $\left( SP_{\lambda
}\right) $ admits a positive solution $(\overline{v}_{\lambda },\phi _{K,%
\overline{v}_{\lambda }})\in H^{1}(\mathbb{R}^{3})\times D^{1,2}(\mathbb{R}%
^{3})$ satisfying%
\begin{equation*}
\frac{p-2}{4p}\left( \frac{S_{p}^{p}}{f_{\max }}\right) ^{\frac{2}{p-2}%
}<I_{\lambda }\left( \overline{v}_{\lambda }\right) =\alpha _{\lambda
}^{-}<\alpha _{0}^{\infty },
\end{equation*}%
and $\overline{v}_{\lambda }$ has also exponential decay like $(\ref{3-1}).$
Moreover, similar to Lemma \ref{h-1} and Proposition \ref{h-5}, we have the
following two conclusions.

\begin{lemma}
\label{h-7}Suppose that conditions ${(F1)},{(F2)},\left( K1\right) $ and $%
\left( K3\right) $ hold. Then for each $0<\varepsilon <1$ there exists $%
C_{\varepsilon }>0$ such that\newline
$\left( i\right) $ $\int_{\mathbb{R}^{3}}K(x)\phi _{K,\overline{w}_{n}}%
\overline{v}_{\lambda }^{2}dx=\int_{\mathbb{R}^{3}}K(x)\phi _{K,\overline{v}%
_{\lambda }}\overline{w}_{n}^{2}dx\leq C_{\varepsilon }e^{-n^{1-\varepsilon
}};$\newline
$\left( ii\right) $ $\left\vert \int_{\mathbb{R}^{3}}K(x)\phi _{K,\overline{v%
}_{\lambda }-\overline{w}_{n}}\left( \overline{v}_{\lambda }-\overline{w}%
_{n}\right) ^{2}dx-\int_{\mathbb{R}^{3}}K(x)\phi _{K,\overline{v}_{\lambda }}%
\overline{v}_{\lambda }^{2}dx-\int_{\mathbb{R}^{3}}K(x)\phi _{K,\overline{w}%
_{n}}\overline{w}_{n}^{2}dx\right\vert \leq C_{\varepsilon
}e^{-n^{1-\varepsilon }}.$
\end{lemma}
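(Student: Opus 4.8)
The plan is to reproduce the argument of Lemma \ref{h-1} almost verbatim, replacing $v_{\lambda},w_{n}$ by $\overline{v}_{\lambda},\overline{w}_{n}$, exploiting that here $\overline{w}_{n}$ carries the \emph{genuine} exponential decay $\overline{w}_{n}(x)=w_{0}(x-ne_{1})\leq B_{0}\exp(-\left\vert x-ne_{1}\right\vert)$ of the Schr\"{o}dinger ground state (from $(\ref{45})$), whereas $\overline{v}_{\lambda}$ and $\phi_{K,\overline{v}_{\lambda}}$ still obey the sub-exponential bound $(\ref{3-1})$, available (together with the existence of $\overline{v}_{\lambda}$) from \cite[Theorem 1.5]{SWF1} under $(F1),(F2),(K1),(K3)$. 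For part $(i)$ I first record, via Fubini's theorem and the explicit form of the potential in Lemma \ref{h1} $(ii)$, the identity
\begin{equation*}
\int_{\mathbb{R}^{3}}K(x)\phi_{K,\overline{w}_{n}}\overline{v}_{\lambda}^{2}\,dx=\int_{\mathbb{R}^{3}}K(x)\phi_{K,\overline{v}_{\lambda}}\overline{w}_{n}^{2}\,dx=\frac{1}{4\pi}\int_{\mathbb{R}^{3}}\int_{\mathbb{R}^{3}}\frac{K(x)\overline{v}_{\lambda}^{2}(x)K(y)\overline{w}_{n}^{2}(y)}{\left\vert x-y\right\vert}\,dx\,dy.
\end{equation*}
I would then estimate the \emph{second} representation, bounding $K\leq K_{\max}$ by $(K1)$ and inserting the two decay estimates:
\begin{equation*}
\int_{\mathbb{R}^{3}}K(x)\phi_{K,\overline{v}_{\lambda}}\overline{w}_{n}^{2}\,dx\leq K_{\max}B_{0}^{2}C_{\varepsilon}\int_{\mathbb{R}^{3}}\exp\left(-\left\vert x\right\vert^{1-\varepsilon}\right)\exp\left(-2\left\vert x-ne_{1}\right\vert\right)dx.
\end{equation*}

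It then remains to control the overlap integral, which I would do by splitting $\mathbb{R}^{3}=B_{n/2}(ne_{1})\cup\left(B_{n/2}(ne_{1})\right)^{c}$. On $B_{n/2}(ne_{1})$ one has $\left\vert x\right\vert\geq n/2$, so the slow factor is at most $\exp(-(n/2)^{1-\varepsilon})$ while the fast factor integrates to a finite constant; on the complement $\left\vert x-ne_{1}\right\vert\geq n/2$, so $\exp(-2\left\vert x-ne_{1}\right\vert)\leq\exp(-n/2)\exp(-\left\vert x-ne_{1}\right\vert)$ and that piece is at most $C\exp(-n/2)$. Adding the two contributions and renaming the decay parameter (choosing it strictly below the target $\varepsilon$, so that $(n/2)^{1-\varepsilon'}\geq n^{1-\varepsilon}$ for large $n$) yields $\int_{\mathbb{R}^{3}}K(x)\phi_{K,\overline{w}_{n}}\overline{v}_{\lambda}^{2}\,dx\leq C_{\varepsilon}\exp(-n^{1-\varepsilon})$, which is $(i)$.

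For part $(ii)$ I would expand the Coulomb quadratic form. Writing $D(g,h)=\frac{1}{4\pi}\int_{\mathbb{R}^{3}}\int_{\mathbb{R}^{3}}\frac{K(x)g(x)K(y)h(y)}{\left\vert x-y\right\vert}\,dx\,dy$ and $u=\overline{v}_{\lambda}-\overline{w}_{n}$, so that $u^{2}=\overline{v}_{\lambda}^{2}+\overline{w}_{n}^{2}-2\overline{v}_{\lambda}\overline{w}_{n}$, bilinearity and symmetry give
\begin{equation*}
\int_{\mathbb{R}^{3}}K\phi_{K,u}u^{2}\,dx-\int_{\mathbb{R}^{3}}K\phi_{K,\overline{v}_{\lambda}}\overline{v}_{\lambda}^{2}\,dx-\int_{\mathbb{R}^{3}}K\phi_{K,\overline{w}_{n}}\overline{w}_{n}^{2}\,dx=2D(\overline{v}_{\lambda}^{2},\overline{w}_{n}^{2})-4D(\overline{v}_{\lambda}^{2},\overline{v}_{\lambda}\overline{w}_{n})-4D(\overline{w}_{n}^{2},\overline{v}_{\lambda}\overline{w}_{n})+4D(\overline{v}_{\lambda}\overline{w}_{n},\overline{v}_{\lambda}\overline{w}_{n}).
\end{equation*}
The first term is $2\int_{\mathbb{R}^{3}}K\phi_{K,\overline{w}_{n}}\overline{v}_{\lambda}^{2}\,dx$ and is controlled by $(i)$; every remaining term carries a factor $\overline{v}_{\lambda}\overline{w}_{n}\leq C_{\varepsilon}\exp(-\left\vert x\right\vert^{1-\varepsilon})\exp(-\left\vert x-ne_{1}\right\vert)$, whose $L^{6/5}$ norm is $O(\exp(-cn^{1-\varepsilon}))$ by the same domain split, so that $D(\overline{v}_{\lambda}\overline{w}_{n},\overline{v}_{\lambda}\overline{w}_{n})\leq C_{\varepsilon}\exp(-n^{1-\varepsilon})$ by the Hardy--Littlewood--Sobolev inequality (as in Lemma \ref{h1}). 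Applying the Cauchy--Schwarz inequality $\left\vert D(g,h)\right\vert\leq D(g,g)^{1/2}D(h,h)^{1/2}$ for the positive form $D$, together with the $n$-independent bounds on $D(\overline{v}_{\lambda}^{2},\overline{v}_{\lambda}^{2})$ and $D(\overline{w}_{n}^{2},\overline{w}_{n}^{2})$, each cross term is $\leq C_{\varepsilon}\exp(-n^{1-\varepsilon})$, which gives $(ii)$.

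The only genuinely new point relative to Lemma \ref{h-1} — and the step I would treat most carefully — is the choice of representation in $(i)$. Since $\overline{w}_{n}$ concentrates near $ne_{1}$, the potential $\phi_{K,\overline{w}_{n}}$ decays only like $\left\vert x-ne_{1}\right\vert^{-1}$, i.e. like $n^{-1}$, on the support of $\overline{v}_{\lambda}^{2}$ near the origin, so estimating the \emph{first} representation directly would produce merely a polynomial bound $O(n^{-1})$. One must instead pass to the second representation and invoke the sub-exponential decay of $\phi_{K,\overline{v}_{\lambda}}$ from $(\ref{3-1})$, where both densities are forced to be far apart. Beyond this, the faster exponential decay of $\overline{w}_{n}$ (compared with the $w_{\lambda}^{\infty}$ of Lemma \ref{h-1}) only strengthens every estimate, so no additional obstacle arises.
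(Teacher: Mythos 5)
Your proposal is correct and follows essentially the same route as the paper, which omits the proof of this lemma entirely with the remark that it is ``analogous to Lemma \ref{h-1}'': Fubini plus the pointwise decay bounds $(\ref{45})$ and $(\ref{3-1})$ for part $(i)$, and part $(i)$ together with the expansion of the Coulomb quadratic form for part $(ii)$. Your write-up merely supplies the details (the domain splitting for the overlap integral and the explicit bilinear expansion with Cauchy--Schwarz/HLS for the cross terms) that the paper's two-line proof of Lemma \ref{h-1} leaves implicit.
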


\begin{proposition}
\label{h-6}Suppose that $2<p<4,$ and conditions ${(F1)},{(F2)},\left(
K1\right) $ and $\left( K3\right) $ hold. Then there exist two positive
numbers $\overline{\lambda }^{\ast }\leq \min \{\widetilde{\lambda },%
\overline{\Lambda }\}$ and $\overline{n}^{\ast }\in \mathbb{N}$ such that
for every $0<\lambda <\overline{\lambda }^{\ast }$ and $n\geq \overline{n}%
^{\ast },$ there exists $\left( \overline{s}_{\lambda }^{\ast },\overline{t}%
_{n}^{\ast }\right) \in \left( 0,\infty \right) \times \left( 0,\infty
\right) $ such that $\overline{s}_{\lambda }^{\ast }\overline{v}_{\lambda }-%
\overline{t}_{n}^{\ast }\overline{w}_{n}\in \mathbf{N}_{\lambda }^{(1)}$ and%
\begin{equation*}
I_{\lambda }\left( \overline{s}_{\lambda }^{\ast }\overline{v}_{\lambda }-%
\overline{t}_{n}^{\ast }\overline{w}_{n}\right) <\alpha _{\lambda
}^{-}+\alpha _{0}^{\infty }.
\end{equation*}
\end{proposition}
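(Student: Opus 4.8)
The plan is to follow the architecture of Proposition~\ref{h-5} verbatim, with the single structural change forced by condition $(K3)$: the concentrating profile is now the translate $\overline{w}_n$ of the pure Schr\"{o}dinger ground state $w_0$ of $(E_0^\infty)$ rather than the translate of a solution of the full system at infinity, and accordingly the energy ceiling to be beaten becomes $\alpha_\lambda^-+\alpha_0^\infty$ in place of $\alpha_\lambda^-+\alpha_\lambda^\infty$. The crux of the whole argument, and the reason this is the correct ceiling, is that under $(K3)$ the self-interaction Poisson energy carried by $\overline{w}_n$ disappears in the limit: writing $\overline{w}_n(x)=w_0(x-ne_1)$ and translating,
\begin{equation*}
\int_{\mathbb{R}^3} K(x)\phi_{K,\overline{w}_n}\overline{w}_n^2\,dx=\frac{1}{4\pi}\int_{\mathbb{R}^3}\int_{\mathbb{R}^3}\frac{K(x+ne_1)K(y+ne_1)w_0^2(x)w_0^2(y)}{|x-y|}\,dx\,dy\longrightarrow 0
\end{equation*}
as $n\to\infty$, since $K(\cdot+ne_1)\to 0$ pointwise by $(K3)$ while the kernel is dominated using the exponential decay $(\ref{45})$ of $w_0$. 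Hence the fibre of $I_\lambda$ along $\overline{w}_n$ converges to that of $I_0^\infty$ along $w_0$, whose maximal value is exactly $\alpha_0^\infty$.

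First I would reproduce the fibre-map analysis of Lemmas~\ref{h-2} and~\ref{h-3} for the two profiles. For $\overline{v}_\lambda$ this yields $\widehat{s}_\lambda\in\big((p/2)^{1/(p-2)},(p/(4-p))^{1/(p-2)}\big)$ with $I_\lambda(\overline{v}_\lambda)=\sup_{0\le s\le\widehat{s}_\lambda}I_\lambda(s\overline{v}_\lambda)$ and $I_\lambda(\widehat{s}_\lambda\overline{v}_\lambda)<0$; for $\overline{w}_n$, using the vanishing of the Poisson term above together with $\int f|\overline{w}_n|^p\to f_\infty\int w_0^p=\|w_0\|_{H^1}^2$ and $f\ge f_\infty$ from $(F2)$, one obtains for $n$ large a corresponding $\widehat{t}_n$ in the same interval with $\sup_{0\le t\le\widehat{t}_n}I_\lambda(t\overline{w}_n)\to\alpha_0^\infty$ and $I_\lambda(\widehat{t}_n\overline{w}_n)<0$. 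Next I would run the two-dimensional argument of Lemma~\ref{h-4}: combining the Brezis--Lieb splitting, the splitting of the Poisson quadratic form from Lemma~\ref{h-7}$(ii)$, the vanishing Poisson self-energy, and $f(\cdot+ne_1)\to f_\infty$ gives
\begin{equation*}
\lim_{n\to\infty}I_\lambda(\overline{v}_\lambda-\overline{w}_n)=I_\lambda(\overline{v}_\lambda)+I_0^\infty(w_0)=\alpha_\lambda^-+\alpha_0^\infty,
\end{equation*}
together with strict inequality of $I_\lambda(\overline{v}_\lambda-\overline{w}_n)$ over the boundary of $Q=[0,\widehat{s}_\lambda]\times[0,\widehat{t}_n]$. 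Consequently $\widehat{h}(s,t)=I_\lambda(s\overline{v}_\lambda-t\overline{w}_n)$ attains its supremum over $Q$ at an interior point $(\overline{s}_\lambda^\ast,\overline{t}_n^\ast)$. Setting $u:=\overline{s}_\lambda^\ast\overline{v}_\lambda-\overline{t}_n^\ast\overline{w}_n$, the two stationarity conditions read $\langle I_\lambda'(u),\overline{v}_\lambda\rangle=0$ and $\langle I_\lambda'(u),-\overline{w}_n\rangle=0$, whence $\langle I_\lambda'(u),u\rangle=0$, i.e.\ $u\in\mathbf{M}_\lambda$.

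The heart of the proof is then the strict bound $I_\lambda(u)<\alpha_\lambda^-+\alpha_0^\infty$, which I would establish exactly as in $(\ref{5-3})$--$(\ref{5-12})$: splitting off $I_\lambda(\overline{s}_\lambda^\ast\overline{v}_\lambda)\le\alpha_\lambda^-$ and $I_0^\infty(\overline{t}_n^\ast\overline{w}_n)\le\alpha_0^\infty$, controlling the residual Poisson terms via Lemma~\ref{h-7} and the vanishing self-energy, applying $|c-d|^p>c^p+d^p-C^{\ast}(p)(c^{p-1}d+cd^{p-1})$ to the nonlinear part, and finally playing the gain from $(F2)$, namely $\int(f-f_\infty)|\overline{w}_n|^p\ge D_0 e^{-n^{r_f}}$, against the exponentially small cross terms $\lesssim e^{-n^{1-\varepsilon}}$. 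Choosing $0<\varepsilon<1-r_f$ and $n$ large makes the $(F2)$ gain dominate, delivering $I_\lambda(u)<\alpha_\lambda^-+\alpha_0^\infty$.

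Finally, to place $u$ in $\mathbf{N}_\lambda^{(1)}$, I would use $\alpha_\lambda^-<\alpha_0^\infty$ (recorded before Lemma~\ref{h-7}) to get $I_\lambda(u)<\alpha_\lambda^-+\alpha_0^\infty<2\alpha_0^\infty<C(p)(S_p^p/f_\infty)^{2/(p-2)}$, the last step holding by the stated lower bounds on $C(p)$; hence $u\in\mathbf{M}_\lambda^{(1)}\cup\mathbf{M}_\lambda^{(2)}$. The alternative $u\in\mathbf{M}_\lambda^{(2)}$ is excluded exactly as in Proposition~\ref{h-5}: identities $(\ref{2-1})$ and $(\ref{2-4})$ force $I_\lambda(u)<\frac{\lambda(4-p)}{4p}\overline{S}^{-2}S_{12/5}^{-4}K_{\max}C_0^2<\alpha_0^\infty$ for $\lambda$ small, contradicting the lower bound $I_\lambda(u)\ge I_\lambda(\overline{v}_\lambda-\overline{w}_n)=\alpha_\lambda^-+\alpha_0^\infty+o(1)$. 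Thus $u\in\mathbf{M}_\lambda^{(1)}$, and the two vanishing partial derivatives give, as in the last step of Proposition~\ref{h-5}, that $u\in\mathbf{N}_\lambda^{(1)}$. I expect the main obstacle to be precisely this energy estimate: unlike in Proposition~\ref{h-5}, the Poisson self-energy of $\overline{w}_n$ is now only an $o(1)$ quantity rather than the exact energy of a solution, so one must certify that it, together with all cross terms, is genuinely dominated by the $(F2)$ gain $D_0 e^{-n^{r_f}}$, which is where the interplay of $(K3)$, the decay rate $r_f$, and the choice of $\varepsilon$ is essential.
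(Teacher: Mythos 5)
The paper does not actually write out this proof; it declares it ``analogous'' to Proposition \ref{h-5}, and your reconstruction follows that analogy faithfully: translated ground state $\overline{w}_n$ of $(E_0^{\infty})$ in place of $w_n$, threshold $\alpha_{\lambda}^{-}+\alpha_{0}^{\infty}$ in place of $\alpha_{\lambda}^{-}+\alpha_{\lambda}^{\infty}$, and the same chain interior maximum $\Rightarrow$ $\mathbf{M}_{\lambda}$ $\Rightarrow$ energy bound $\Rightarrow$ exclusion of $\mathbf{M}_{\lambda}^{(2)}$ $\Rightarrow$ $\mathbf{N}_{\lambda}^{(1)}$. You have also located precisely where the analogy is not routine: the self-interaction term $\frac{\lambda t^{4}}{4}\int_{\mathbb{R}^{3}}K(x)\phi_{K,\overline{w}_n}\overline{w}_n^{2}\,dx$. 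In Proposition \ref{h-5} this term never appears as a cost, because it is compared with $\frac{\lambda t^{4}}{4}\int K_{\infty}\phi_{K_{\infty},w_n}w_n^{2}\,dx$ inside $I_{\lambda}^{\infty}(t_{n}^{\ast}w_n)\le\alpha_{\lambda}^{\infty}$ and the difference has the favorable sign thanks to $K\lneqq K_{\infty}$ in $(K2)$. Here the limit equation has no Poisson term at all, so the whole (nonnegative) self-energy survives as an extra positive contribution sitting on top of $\alpha_{\lambda}^{-}+\alpha_{0}^{\infty}$.

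The gap is in how you propose to kill it. You assert that this self-energy, together with the cross terms, ``is genuinely dominated by the $(F2)$ gain $D_0e^{-n^{r_f}}$'' through ``the interplay of $(K3)$, the decay rate $r_f$, and the choice of $\varepsilon$.'' That cannot work as stated: $(K3)$ carries no decay rate. Splitting the double integral according to whether $|x-ne_1|\le n/2$ shows that $\int K\phi_{K,\overline{w}_n}\overline{w}_n^{2}\,dx$ is of order $\bigl(\sup_{|x|\ge n/2}K(x)\bigr)^{2}$ up to genuinely exponential corrections; for $K(x)\sim|x|^{-2}$ at infinity, which is admissible under $(K1)$ and $(K3)$, this decays only like $n^{-4}$ and therefore eventually dwarfs $D_0e^{-n^{r_f}}$ for every fixed $\lambda>0$. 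Unlike the cross terms of Lemma \ref{h-7}, which are small because the two bumps are far apart, the self-energy is small only because $K$ is small near $ne_1$, and choosing $\varepsilon<1-r_f$ does nothing for it. The estimate closes only by exploiting the prefactor $\lambda$: fix $n=\overline{n}^{\ast}$ large enough that the exponentially small cross terms are beaten by the $(F2)$ gain, then shrink $\overline{\lambda}^{\ast}$ (now depending on $\overline{n}^{\ast}$) so that $\lambda\int K\phi_{K,\overline{w}_{\overline{n}^{\ast}}}\overline{w}_{\overline{n}^{\ast}}^{2}\,dx$ is beaten as well. This yields the single element of $\mathbf{N}_{\lambda}^{(1)}$ with energy below $\alpha_{\lambda}^{-}+\alpha_{0}^{\infty}$ that Section 6 actually uses, though not the ``for all $n\ge\overline{n}^{\ast}$'' uniformity literally claimed; the remainder of your argument is sound.
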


The proofs of the two results above are analogous to those of Lemma \ref{h-1}
and Proposition \ref{h-5}, respectively, and so we omit here.

\section{Palais--Smale Sequences}

Define%
\begin{equation*}
\theta _{\lambda }^{-}=\inf_{u\in \mathbf{N}_{\lambda }^{(1)}}I_{\lambda
}\left( u\right) .
\end{equation*}%
Then by Lemma \ref{h3-5} and Proposition \ref{h-5} or \ref{h-6}, we have%
\begin{equation}
2\alpha _{\lambda }^{-}\leq \theta _{\lambda }^{-}<\alpha _{\lambda
}^{-}+\alpha _{\lambda }^{\infty }  \label{18-0}
\end{equation}%
or
\begin{equation*}
2\alpha _{\lambda }^{-}\leq \theta _{\lambda }^{-}<\alpha _{\lambda
}^{-}+\alpha _{0}^{\infty }.
\end{equation*}%
Next, we define%
\begin{equation*}
\Phi _{\lambda }^{+}\left( u\right) =\frac{\left\Vert u^{+}\right\Vert
_{H^{1}}^{2}+\lambda \left( \int_{\mathbb{R}^{3}}K(x)\phi
_{K,u^{-}}(u^{+})^{2}dx+\int_{\mathbb{R}^{3}}K(x)\phi
_{K,u^{+}}(u^{+})^{2}dx\right) }{\int_{\mathbb{R}^{3}}f(x)\left\vert
u^{+}\right\vert ^{p}dx}
\end{equation*}%
and
\begin{equation*}
\Phi _{\lambda }^{-}\left( u\right) =\frac{\left\Vert u^{-}\right\Vert
_{H^{1}}^{2}+\lambda \left( \int_{\mathbb{R}^{3}}K(x)\phi
_{K,u^{+}}(u^{-})^{2}dx+\int_{\mathbb{R}^{3}}K(x)\phi
_{K,u^{-}}(u^{-})^{2}dx\right) }{\int_{\mathbb{R}^{3}}f(x)\left\vert
u^{-}\right\vert ^{p}dx}.
\end{equation*}%
Then for each $u\in \mathbf{N}_{\lambda }^{(1)},$ there holds $\Phi
_{\lambda }^{+}\left( u\right) =\Phi _{\lambda }^{-}\left( u\right) =1.$
Furthermore, we have the following results.

\begin{lemma}
\label{f4-1}For each $\epsilon >0$ there exists $\mu (\epsilon )>0$ such
that for every $v\in \mathbf{N}_{\lambda }^{(1)}$ and $u\in H^{1}(\mathbb{R}%
^{3})$ with $\left\Vert v-u\right\Vert _{H^{1}}<\mu (\epsilon ),$ there
holds $\left\vert \Phi _{\lambda }^{+}\left( u\right) -1\right\vert
+\left\vert \Phi _{\lambda }^{-}\left( u\right) -1\right\vert <\epsilon .$
\end{lemma}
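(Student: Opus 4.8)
The plan is to exploit that $\Phi_\lambda^{+}(v)=\Phi_\lambda^{-}(v)=1$ for every $v\in\mathbf N_\lambda^{(1)}$, so that the assertion is a \emph{uniform} continuity statement: it suffices to bound $|\Phi_\lambda^{\pm}(u)-\Phi_\lambda^{\pm}(v)|$ by a modulus of $\|u-v\|_{H^1}$ independent of $v$. Write $\Phi_\lambda^{+}(u)=N^{+}(u)/D^{+}(u)$, where $D^{+}(u)=\int_{\mathbb R^3}f|u^{+}|^p\,dx$ and $N^{+}(u)$ is the corresponding numerator (similarly for $-$). I will first record two uniform facts on $\mathbf N_\lambda^{(1)}$: by $(\ref{4-5})$ the set is bounded in $H^1$, say $\|v\|_{H^1}\le M$; and by the defining identity of $\mathbf N_\lambda$ together with Lemma \ref{h1}$(ii)$ and $(\ref{2-20})$ one has $D^{+}(v)=\|v^{+}\|_{H^1}^2+\lambda(\cdots)\ge\|v^{+}\|_{H^1}^2\ge (S_p^p/f_{\max})^{2/(p-2)}=:c_0>0$, with the same bound for $D^{-}$. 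Since $\Phi_\lambda^{\pm}(v)=1$ gives $N^{\pm}(v)=D^{\pm}(v)$, the elementary quotient inequality $|\Phi_\lambda^{\pm}(u)-1|\le\bigl(|N^{\pm}(u)-N^{\pm}(v)|+|D^{\pm}(u)-D^{\pm}(v)|\bigr)/D^{\pm}(u)$ reduces everything to a $v$-independent modulus of continuity of $N^{\pm}$ and $D^{\pm}$ on the ball $\{\|u\|_{H^1}\le M+1\}$, plus $D^{\pm}(u)\ge c_0/2$ once $\mu$ is small.

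Most pieces are controlled by the pointwise Lipschitz bound $|u^{\pm}-v^{\pm}|\le|u-v|$, which upgrades through the embedding $H^1(\mathbb R^3)\hookrightarrow L^{q}(\mathbb R^3)$ for $2\le q\le 6$ to $\|u^{\pm}-v^{\pm}\|_{L^{q}}\le\|u-v\|_{L^{q}}\le C\|u-v\|_{H^1}$. For the nonlinear term, $\bigl||a|^p-|b|^p\bigr|\le p(|a|^{p-1}+|b|^{p-1})|a-b|$ and H\"older (legitimate since $2<p<4$) give $|D^{\pm}(u)-D^{\pm}(v)|\le C(M)\|u-v\|_{H^1}$. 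For the nonlocal terms I will write each of $\int K\phi_{K,u^{\pm}}(u^{\pm})^2\,dx$ and $\int K\phi_{K,u^{\mp}}(u^{\pm})^2\,dx$ as a value of the four-linear form $\Pi$ of Lemma \ref{h2}, and use the Hardy--Littlewood--Sobolev bound of Lemma \ref{h1}$(i)$, which controls $\Pi$ by $L^{12/5}$ norms of its arguments, together with multilinearity, to get differences $\le C(M)\|u-v\|_{H^1}$. All constants depend only on $M$, hence are uniform over $v\in\mathbf N_\lambda^{(1)}$, and in particular $D^{\pm}(u)\ge c_0/2$ for $\mu$ small.

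The one term not covered by the pointwise estimate — and the step I expect to be the main obstacle — is the kinetic part $\|u^{\pm}\|_{H^1}^2$ inside $N^{\pm}$. Its $L^2$ part is harmless, but the Dirichlet part satisfies only $\int_{\{u>0\}}|\nabla u|^2-\int_{\{v>0\}}|\nabla v|^2=\int_{\{u>0\}}(|\nabla u|^2-|\nabla v|^2)+\int(\chi_{\{u>0\}}-\chi_{\{v>0\}})|\nabla v|^2$. The first integral is $O(\|u-v\|_{H^1})$ uniformly by Cauchy--Schwarz and $\|u\|_{H^1},\|v\|_{H^1}\le M+1$, but the second is an integral of $|\nabla v|^2$ over the sign-discrepancy set $E:=\{u>0\}\,\triangle\,\{v>0\}\subset\{|v|\le|u-v|\}$. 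Since the map $u\mapsto u^{+}$ is \emph{not} uniformly continuous on $H^1$-bounded sets in general, this term cannot be dispatched by boundedness alone and is the crux of the lemma.

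To control it I would fix $\eta>0$ and split $\{|v|\le|u-v|\}\subset\{|v|\le\eta\}\cup\{|u-v|>\eta\}$, so that $\int_{E}|\nabla v|^2\le\int_{\{|v|\le\eta\}}|\nabla v|^2+\int_{\{|u-v|>\eta\}}|\nabla v|^2$. Because $\nabla v=0$ a.e. on $\{v=0\}$, the first term tends to $0$ as $\eta\to0$ for each fixed $v$; and by Chebyshev $|\{|u-v|>\eta\}|\le\mu^2/\eta^2$, so the second term is small provided $\{|\nabla v|^2:v\in\mathbf N_\lambda^{(1)}\}$ is uniformly integrable. The genuinely hard point is to make both bounds uniform in $v$, i.e. to prove $\lim_{\eta\to0}\sup_{v\in\mathbf N_\lambda^{(1)}}\int_{\{|v|\le\eta\}}|\nabla v|^2=0$ together with the uniform integrability. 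I would establish this by contradiction: from a putative bad sequence $\{v_n\}\subset\mathbf N_\lambda^{(1)}$ I extract a weak $H^1$- and a.e.-limit and use the Nehari normalization — the lower bound $\|v^{\pm}\|_{H^1}\ge(S_p^p/f_{\max})^{1/(p-2)}$ from $(\ref{2})$ and the energy ceiling defining $\mathbf M_\lambda^{(1)}$ — to rule out concentration of $|\nabla v_n|^2$ on the nodal set $\{v_n=0\}$; this is exactly the mechanism that excludes the oscillatory counterexamples (which fail the Nehari balance $\|v^{\pm}\|_{H^1}^2\le\int f|v^{\pm}|^p\,dx$). Once this uniform non-concentration at the nodal interface is in hand, combining it with the preceding paragraphs produces a single $\mu(\epsilon)$ valid for all $v\in\mathbf N_\lambda^{(1)}$.
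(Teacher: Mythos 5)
Your reduction is sound as far as it goes: using $\Phi_\lambda^{\pm}(v)=1$, the uniform lower bound $D^{\pm}(v)\geq (S_p^p/f_{\max})^{2/(p-2)}$ coming from $(\ref{2-20})$, the $H^1$-bound $D_1$ on $\mathbf{N}_\lambda^{(1)}$, the pointwise inequality $|u^{\pm}-v^{\pm}|\leq |u-v|$ for all the $L^q$-type terms, and the multilinearity of $\Pi$ together with the Hardy--Littlewood--Sobolev bound for the nonlocal terms, every piece except the Dirichlet part of $\|u^{\pm}\|_{H^1}^2$ acquires a $v$-independent modulus of continuity. You also correctly isolate the crux: the remaining term reduces to $\int_{\{0<|v|\leq |u-v|\}}|\nabla v|^2$, and this is the entire content of the lemma, since $u\mapsto u^{+}$ is continuous but not uniformly continuous on bounded subsets of $H^1(\mathbb{R}^3)$.

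The gap is that your proposed mechanism for closing this step --- deducing $\lim_{\eta\to 0}\sup_{v\in\mathbf{N}_\lambda^{(1)}}\int_{\{0<|v|\leq\eta\}}|\nabla v|^2=0$, together with uniform integrability of $\{|\nabla v|^2\}$, from the Nehari normalization --- is asserted rather than proved, and the suggested route does not work. The identities $\langle I_\lambda'(v),v^{\pm}\rangle=0$ and the bounds defining $\mathbf{M}_\lambda^{(1)}$ are global integral constraints; they carry no information about where $|\nabla v|^2$ sits relative to the nodal set. For instance, starting from a fixed $w\in\mathbf{N}_\lambda^{(1)}$ whose positive and negative supports are separated by a gap, one can insert into the gap an oscillation $z_n=a_n\sin(b_nx_1)\chi$ with $a_n\to 0$ and $a_nb_n$ bounded, so that $\|\nabla z_n\|_{L^2}^2$ remains of order one while $\|z_n\|_{L^p}\to 0$, and then rescale $(w+z_n)^{+}$ and $(w+z_n)^{-}$ separately (two parameters, two constraints, solvable as in Lemma \ref{h3-3} for small $\lambda$) to return to $\mathbf{N}_\lambda^{(1)}$; the resulting functions concentrate a fixed amount of $|\nabla v_n|^2$ on $\{0<|v_n|\leq Ca_n\}$. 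So membership in $\mathbf{N}_\lambda^{(1)}$ alone does not yield the uniform non-concentration you need, and your contradiction argument cannot be run through a weak or a.e.\ limit either, because $\int_{\{|v_n|\leq\eta\}}|\nabla v_n|^2$ is not controlled by such limits. Note also that the paper gives no proof to compare against: it defers to Clapp and Weth \cite[Lemma 13]{CW2}, whose setting (a bounded domain and a different functional) is not the present one, so the uniformity burden falls entirely on this step, and as written your proposal does not discharge it.
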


\begin{lemma}
\label{f4-2}Suppose that $2<p<4.$ Then for each $v_{0}\in \mathbf{N}%
_{\lambda }^{(1)},$ there exists a map $\phi _{\lambda }:H^{1}(\mathbb{R}%
^{3})\rightarrow \mathbb{R}^{2}$ such that\newline
$\left( i\right) \ \phi _{\lambda }\left(
s_{1}v_{0}^{+}+s_{2}v_{0}^{-}\right) =\left( s_{1},s_{2}\right) $ for $%
\left( s_{1},s_{2}\right) \in \left[ 0,\widetilde{s}_{\lambda }\right]
\times \left[ 0,\widetilde{t}_{\lambda }\right] ;$\newline
$\left( ii\right) \ \phi _{\lambda }\left( u\right) =\left( 1,1\right) $ if
and only if $u\in \mathbf{N}_{\lambda }^{(1)}.$
\end{lemma}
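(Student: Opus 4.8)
The plan is to build $\phi_\lambda$ as a \emph{normalized projection onto the bounded nodal set} $\mathbf{N}_\lambda^{(1)}$. Fix $v_0\in\mathbf{N}_\lambda^{(1)}$ and, for $u\in H^1(\mathbb{R}^3)$ with $u^\pm\not\equiv0$, consider the two–variable fibering map $\widetilde h_u(s,t)=I_\lambda(su^++tu^-)$. Since $\partial_s\widetilde h_u=\langle I_\lambda'(su^++tu^-),u^+\rangle$ and $\partial_t\widetilde h_u=\langle I_\lambda'(su^++tu^-),u^-\rangle$, the critical point equation $\nabla_{(s,t)}\widetilde h_u=0$ is exactly the statement $su^++tu^-\in\mathbf{N}_\lambda$; in the notation of the proof of Lemma~\ref{h3-3} it reads
\[
A_1+\lambda s^2A_2+\lambda t^2C=s^{p-2}A_3,\qquad B_1+\lambda t^2B_2+\lambda s^2C=t^{p-2}B_3.
\]
At $\lambda=0$ this has the unique positive solution $(\sigma_0,\tau_0)=\bigl((A_1/A_3)^{1/(p-2)},(B_1/B_3)^{1/(p-2)}\bigr)$, and for $0<\lambda<\widetilde\lambda$ I would continue it by the implicit function theorem applied to $\nabla_{(s,t)}\widetilde h_u=0$, whose $(s,t)$–Jacobian is the Hessian $H_\lambda$, shown to be negative definite for small $\lambda$ in the computation preceding Lemma~\ref{h3-3}. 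This yields a unique continuation $(\sigma(u),\tau(u))$, continuous in $u$ by Lemma~\ref{f4-1}, and I would set
\[
\phi_\lambda(u)=\Bigl(\tfrac{1}{\sigma(u)},\tfrac{1}{\tau(u)}\Bigr),
\]
extended continuously across the faces $\{u^+\equiv0\}$, $\{u^-\equiv0\}$ and over the rest of $H^1(\mathbb{R}^3)$ so that the value $(1,1)$ is taken nowhere outside $\mathbf{N}_\lambda^{(1)}$.

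With this definition property~$(i)$ is a scaling computation. If $u=s_1v_0^++s_2v_0^-$ with $s_1,s_2>0$, then $u^+=s_1v_0^+$, $u^-=s_2v_0^-$, and using $\phi_{K,ru}=r^2\phi_{K,u}$ (Lemma~\ref{h1}$(iii)$) one checks that $(\sigma,\tau)=(1/s_1,1/s_2)$ solves the projection system, simply because $(1/s_1)u^++(1/s_2)u^-=v_0\in\mathbf{N}_\lambda^{(1)}$. Uniqueness then gives $\sigma(u)=1/s_1$, $\tau(u)=1/s_2$, hence $\phi_\lambda(u)=(s_1,s_2)$; the boundary values $s_1=0$ or $s_2=0$ are recovered from the continuous extension (note $1/\sigma(u)\to0$ as $s_1\to0^+$). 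Property~$(ii)$ is immediate: $\phi_\lambda(u)=(1,1)$ forces $\sigma(u)=\tau(u)=1$, so $u=\sigma(u)u^++\tau(u)u^-\in\mathbf{N}_\lambda^{(1)}$, while conversely every $u\in\mathbf{N}_\lambda^{(1)}$ already solves the projection system at $(1,1)$, whence $\sigma(u)=\tau(u)=1$.

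The main obstacle is exactly the one emphasized in the Introduction: because $\widetilde h_u$ is \emph{not} strictly concave on $(0,\infty)\times(0,\infty)$ when $2<p<4$, the projection system may possess several solutions — the fibering analyses of Lemmas~\ref{h3-3} and~\ref{h3-5} produce both an $\mathbf{M}_\lambda^-$–type local maximum and an $\mathbf{M}_\lambda^{(2)}$–type large–norm critical point — so $(\sigma(u),\tau(u))$ is a priori multivalued. I would resolve this by confining the construction to the bounded region $0<s<\widetilde s_\lambda$, $0<t<\widetilde t_\lambda$: there, for $0<\lambda<\widetilde\lambda$, the monotonicity of the one–signed auxiliary functions $\widehat g_{u^\pm}$ of Lemma~\ref{h3-5} traps the relevant root to the left of the local maximum, the negative definiteness of $H_\lambda$ makes that root unique and the implicit continuation single–valued, and the large–norm critical points are pushed outside the box, so that the selected projection lands in $\mathbf{M}_\lambda^{(1)}$, i.e.\ in $\mathbf{N}_\lambda^{(1)}$. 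The remaining care is the continuity of $\phi_\lambda$ up to the faces of the cone, which I would obtain from the explicit single–signed fibering (the functions $\widehat g_{u^\pm}$) together with the scaling limits verifying the correct boundary values in property~$(i)$.
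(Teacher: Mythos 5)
The paper offers no proof of this lemma to compare against --- it defers entirely to \cite[Lemma 13]{CW2} --- so your construction has to be judged on its own terms, and there it has a genuine gap sitting exactly where the Introduction warns the difficulty lies. For a fixed $\lambda>0$ the projection system
\[
A_1+\lambda s^2A_2+\lambda t^2C=s^{p-2}A_3,\qquad B_1+\lambda t^2B_2+\lambda s^2C=t^{p-2}B_3
\]
need not have \emph{any} positive solution when $2<p<4$. After rescaling it depends only on the directions $u^{\pm}/\Vert u^{\pm}\Vert_{H^1}$, and solvability of the first equation forces $\left(\int_{\mathbb{R}^3}f(x)|e^{+}|^{p}dx\right)^{2/(p-2)}\gtrsim\lambda\int_{\mathbb{R}^3}K(x)\phi_{K,e^{+}}(e^{+})^{2}dx$ for the normalized direction $e^{+}$; taking $e^{+}(x)=c_R\psi(x/R)$ with $\Vert e^{+}\Vert_{H^1}=1$ and $R$ large, the left side decays like $R^{-3}$ while the right side decays only like $\lambda R^{-1}$, so for every $\lambda>0$ there are directions along which $\widetilde h_u$ increases on every ray and has no interior critical point. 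Hence $(\sigma(u),\tau(u))$ is not defined on all of $H^1(\mathbb{R}^3)$, and there is no uniform $\widetilde\lambda$ below which your implicit-function continuation works for every $u$: the negative definiteness of $H_\lambda$ established before Lemma \ref{h3-3} relies on the bounds $(\ref{2-20})$, which hold only at points of $\mathbf{N}_{\lambda}^{(1)}$, not at the continued roots of an arbitrary $u$. (Indeed, if every sign-changing $u$ could be projected into $\mathbf{N}_{\lambda}^{(1)}$ as you assert, the nonemptiness of $\mathbf{N}_{\lambda}^{(1)}$ --- which costs the paper all of Section 3 --- would be immediate.)

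The second problem is that the clause ``extended continuously $\ldots$ so that the value $(1,1)$ is taken nowhere outside $\mathbf{N}_{\lambda}^{(1)}$'' is the entire content of part $(ii)$, not an afterthought. Even where the root exists, nothing forces $\sigma(u)u^{+}+\tau(u)u^{-}$ to land in the bounded component $\mathbf{M}_{\lambda}^{(1)}$ (for directions with moderately small $\int f|e^{\pm}|^{p}dx$ the root has large norm), so the selected root can equal $(1,1)$ at points of $\mathbf{N}_{\lambda}\setminus\mathbf{N}_{\lambda}^{(1)}$; moreover the box $[0,\widetilde s_{\lambda}]\times[0,\widetilde t_{\lambda}]$ you propose to confine to is defined in Lemma \ref{h3-3} only for $u\in\mathbf{N}_{\lambda}^{(1)}$, so it cannot serve as a selection device for general $u$. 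At the boundary of the region where the projection exists the two roots coalesce at a degenerate critical point, and a continuous extension there may be forced to take the value $(1,1)$ at a degenerate $u\notin\mathbf{N}_{\lambda}^{(1)}$. Since Proposition \ref{f5-1} evaluates $\phi_{\lambda}$ along the descending flow $\varphi^{t}$, whose orbits leave every neighbourhood of $\mathbf{N}_{\lambda}^{(1)}$ and reach the region $I_{\lambda}\leq0$, these are not ignorable edge cases: the map must be globally defined, continuous, and must omit the value $(1,1)$ off $\mathbf{N}_{\lambda}^{(1)}$. As written, the ``only if'' half of $(ii)$ and the global definition of $\phi_{\lambda}$ are both missing; you would need either an explicit globally defined formula in the spirit of Clapp--Weth or a proof that your partial projection admits the claimed extension.
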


The proofs of Lemmas \ref{f4-1} and \ref{f4-2} are almost the same as those
in Clapp and Weth \cite[Lemma 13]{CW2} and we omit them here.

\begin{proposition}
\label{f5-1}Let $\epsilon ,\mu (\epsilon )>0$ be as in Lemma \ref{f4-1}.
Then for each
\begin{equation*}
0<\eta <C\left( p\right) \left( \frac{S_{p}^{p}}{f_{\infty }}\right) ^{\frac{%
2}{p-2}}-\theta _{\lambda }^{-}
\end{equation*}%
and $\mu \in (0,\mu (\epsilon )),$ there exists $u_{0}\in H^{1}(\mathbb{R}%
^{3})$ such that for every $0<\lambda <\widetilde{\lambda },$\newline
$\left( i\right) \ dist\left( u_{0},\mathbf{N}_{\lambda }^{(1)}\right) \leq
\mu ;$\newline
$\left( ii\right) \ I_{\lambda }\left( u_{0}\right) \in \lbrack \theta
_{\lambda }^{-},\theta _{\lambda }^{-}+\eta );$\newline
$\left( iii\right) \ \left\Vert I_{\lambda }^{\prime }\left( u_{0}\right)
\right\Vert _{H^{-1}}\leq \max \left\{ \sqrt{\eta },\frac{\eta }{\mu }%
\right\} ;$\newline
$\left( iv\right) \ \left\vert \Phi _{\lambda }^{+}\left( u\right)
-1\right\vert +\left\vert \Phi _{\lambda }^{-}\left( u\right) -1\right\vert
<\epsilon .$
\end{proposition}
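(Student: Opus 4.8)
The plan is to realize $u_{0}$ as an approximate minimizer of $I_{\lambda}$ on the nodal set $\mathbf{N}_{\lambda}^{(1)}$ produced by Ekeland's variational principle, and then to upgrade the resulting intrinsic almost-criticality to the full bound on $\left\Vert I_{\lambda}^{\prime}\right\Vert _{H^{-1}}$ by means of the coordinate maps of Lemmas \ref{f4-1} and \ref{f4-2}. Since $\mathbf{N}_{\lambda}^{(1)}$ is merely a subset of the nodal Nehari manifold and carries no manifold structure, the passage from the constrained to the free derivative is the only delicate point; properties $(i),(ii),(iv)$ will essentially be bookkeeping built from facts already established.

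First I would fix $0<\lambda<\widetilde{\lambda}$ and record why $\theta_{\lambda}^{-}$ is an \emph{interior} infimum. By $(\ref{2-20})$ every $u\in\mathbf{N}_{\lambda}^{(1)}$ satisfies $(S_{p}^{p}/f_{\max})^{1/(p-2)}\leq\Vert u^{\pm}\Vert_{H^{1}}<(2S_{p}^{p}/((4-p)f_{\max}))^{1/(p-2)}$, so both nodal parts are uniformly bounded and uniformly bounded away from $0$; and by Lemma \ref{h3-5} together with $(\ref{18-0})$ one has $2\alpha_{\lambda}^{-}\leq\theta_{\lambda}^{-}<\alpha_{\lambda}^{-}+\alpha_{\lambda}^{\infty}<C(p)(S_{p}^{p}/f_{\infty})^{2/(p-2)}$. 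Hence, for the admissible range of $\eta$, the sublevel set $\{u\in\mathbf{N}_{\lambda}^{(1)}:I_{\lambda}(u)\leq\theta_{\lambda}^{-}+\eta\}$ sits strictly inside the open bounded set $\mathbf{M}_{\lambda}^{(1)}$ (the bound $\Vert u\Vert_{H^{1}}<D_{1}$ holds with a gap, by coercivity of $I_{\lambda}$ there) and keeps $u^{\pm}\not\equiv0$. Since the constraints $\langle I_{\lambda}^{\prime}(u),u^{\pm}\rangle=0$ are closed, this sublevel set is a complete metric space in the $H^{1}$-norm on which $I_{\lambda}$ is bounded below, which is exactly the setting Ekeland requires.

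Next I would apply Ekeland's variational principle with radius $\delta:=\min\{\mu,\sqrt{\eta}\}$ to a point $v\in\mathbf{N}_{\lambda}^{(1)}$ chosen with $I_{\lambda}(v)<\theta_{\lambda}^{-}+\eta$. This yields $u_{0}\in\mathbf{N}_{\lambda}^{(1)}$ with $I_{\lambda}(u_{0})\in[\theta_{\lambda}^{-},\theta_{\lambda}^{-}+\eta)$, giving $(ii)$; with $\Vert u_{0}-v\Vert_{H^{1}}\leq\delta\leq\mu$, so that $\mathrm{dist}(u_{0},\mathbf{N}_{\lambda}^{(1)})=0\leq\mu$, giving $(i)$; and with the minimizing inequality
\begin{equation*}
I_{\lambda}(w)\geq I_{\lambda}(u_{0})-\frac{\eta}{\delta}\left\Vert w-u_{0}\right\Vert _{H^{1}}\quad\text{for all }w\in\mathbf{N}_{\lambda}^{(1)}.
\end{equation*}
Property $(iv)$ is then immediate: $u_{0}$ lies within $\mu<\mu(\epsilon)$ of $\mathbf{N}_{\lambda}^{(1)}$, so Lemma \ref{f4-1} delivers $\left\vert\Phi_{\lambda}^{+}(u_{0})-1\right\vert+\left\vert\Phi_{\lambda}^{-}(u_{0})-1\right\vert<\epsilon$. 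Note that $\eta/\delta=\max\{\sqrt{\eta},\eta/\mu\}$, which is the constant targeted in $(iii)$.

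The heart of the argument is $(iii)$: converting the intrinsic inequality into a bound on the free derivative. For a direction $\varphi\in H^{1}(\mathbb{R}^{3})$ and small $\tau>0$ I would retract $u_{0}-\tau\varphi$ back onto $\mathbf{N}_{\lambda}^{(1)}$ via Lemma \ref{f4-2}, setting $(s(\tau),t(\tau))=\phi_{\lambda}(u_{0}-\tau\varphi)$ and $\zeta(\tau)=s(\tau)(u_{0}-\tau\varphi)^{+}+t(\tau)(u_{0}-\tau\varphi)^{-}\in\mathbf{N}_{\lambda}^{(1)}$, $\zeta(0)=u_{0}$. Because the Hessian $-H_{\lambda}$ of $\widetilde{h}$ at $(1,1)$ is positive definite for $\lambda$ small (as verified just before Lemma \ref{h3-3}), the fibering $(s,t)\mapsto su_{0}^{+}+tu_{0}^{-}$ is nondegenerate, $\phi_{\lambda}$ is $C^{1}$ near $u_{0}$, and $\tau\mapsto\zeta(\tau)$ is differentiable at $0$ with $\zeta^{\prime}(0)=-\varphi+s^{\prime}(0)u_{0}^{+}+t^{\prime}(0)u_{0}^{-}$. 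Feeding $\zeta(\tau)$ into the Ekeland inequality, dividing by $\tau$ and letting $\tau\to0^{+}$ gives $\langle I_{\lambda}^{\prime}(u_{0}),\zeta^{\prime}(0)\rangle\geq-(\eta/\delta)\Vert\zeta^{\prime}(0)\Vert_{H^{1}}$; the natural-constraint identities $\langle I_{\lambda}^{\prime}(u_{0}),u_{0}^{\pm}\rangle=0$ then annihilate the $u_{0}^{\pm}$ components, leaving $\langle I_{\lambda}^{\prime}(u_{0}),\varphi\rangle\leq(\eta/\delta)\Vert\zeta^{\prime}(0)\Vert_{H^{1}}$, whence $\left\Vert I_{\lambda}^{\prime}(u_{0})\right\Vert _{H^{-1}}\leq\eta/\delta=\max\{\sqrt{\eta},\eta/\mu\}$. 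The main obstacle is precisely this last conversion: one must show that the retraction built from $\phi_{\lambda}$ is differentiable and that its differential does not inflate the relevant norm, so the constrained bound passes to the free gradient with no spurious constant. This is exactly the nondegeneracy content transcribed from Clapp and Weth \cite{CW2}, and it is what makes $\mathbf{N}_{\lambda}^{(1)}$ behave as a natural constraint despite not being a manifold.
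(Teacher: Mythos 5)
Your route --- Ekeland's variational principle on $\mathbf{N}_{\lambda}^{(1)}$ followed by a differentiable retraction built from $\phi_{\lambda}$ --- is genuinely different from the paper's, which never invokes Ekeland. The paper runs the negative pseudo-gradient flow $\varphi^{t}$ of $I_{\lambda}$, combines $\deg(\phi_{\lambda}\circ\beta_{\lambda},Q_{\lambda},(1,1))=1$ with the boundary estimate $\sup I_{\lambda}(\beta_{\lambda}(\partial Q_{\lambda}))<2\alpha_{\lambda}^{-}\leq\theta_{\lambda}^{-}$ and the Leray--Schauder global continuation principle to produce a flow segment from $v_{1}$ to $v_{2}=\varphi^{1}(v_{1})\in\mathbf{N}_{\lambda}^{(1)}$, and takes $u_{0}=\varphi^{t_{0}}(v_{1})$ at the time of minimal gradient norm; the identity $I_{\lambda}(v_{1})-I_{\lambda}(v_{2})=\int\Vert I_{\lambda}^{\prime}(\varphi^{t}(v_{1}))\Vert_{H^{-1}}^{2}\,dt$ together with $I_{\lambda}(v_{2})\geq\theta_{\lambda}^{-}$ yields $(iii)$ exactly, with no constrained-to-free conversion ever needed.

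That conversion is precisely where your argument has a genuine gap. First, Lemma \ref{f4-2} supplies a coordinate map, not a retraction: $\phi_{\lambda}$ is only prescribed on the two-dimensional cone $\{s_{1}v_{0}^{+}+s_{2}v_{0}^{-}\}$, and nothing guarantees that $\zeta(\tau)=s(\tau)(u_{0}-\tau\varphi)^{+}+t(\tau)(u_{0}-\tau\varphi)^{-}$ with $(s(\tau),t(\tau))=\phi_{\lambda}(u_{0}-\tau\varphi)$ lies in $\mathbf{N}_{\lambda}^{(1)}$, so the Ekeland inequality cannot be tested on it. If you instead define $(s(\tau),t(\tau))$ by solving the two Nehari equations for $u_{0}-\tau\varphi$, the implicit function theorem is not available: $w\mapsto w^{\pm}$ is not $C^{1}$ (nor even locally Lipschitz) from $H^{1}(\mathbb{R}^{3})$ to itself, and $w\mapsto\Vert w^{+}\Vert_{H^{1}}^{2}=\int_{\{w>0\}}(|\nabla w|^{2}+w^{2})\,dx$ fails to be Gateaux differentiable because the boundary contribution $\int_{\{0<u_{0}\leq\tau|\varphi|\}}|\nabla u_{0}|^{2}\,dx$ is $O(\tau)$ but not $o(\tau)$ in general. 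Hence neither the differentiability of $\tau\mapsto(s(\tau),t(\tau))$ nor the formula $\zeta^{\prime}(0)=-\varphi+s^{\prime}(0)u_{0}^{+}+t^{\prime}(0)u_{0}^{-}$ is justified; this non-smoothness of the nodal decomposition is exactly why the paper, following Clapp--Weth, works with a flow and degree theory instead of Lagrange multipliers. Second, even granting all of this, your final inequality $\langle I_{\lambda}^{\prime}(u_{0}),\varphi\rangle\leq(\eta/\delta)\Vert\zeta^{\prime}(0)\Vert_{H^{1}}$ does not yield $(iii)$: $\Vert\zeta^{\prime}(0)\Vert_{H^{1}}=\Vert-\varphi+s^{\prime}(0)u_{0}^{+}+t^{\prime}(0)u_{0}^{-}\Vert_{H^{1}}$ can strictly exceed $\Vert\varphi\Vert_{H^{1}}$, so taking the supremum over $\Vert\varphi\Vert_{H^{1}}=1$ gives at best $C\max\{\sqrt{\eta},\eta/\mu\}$ with an uncontrolled $C>1$. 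You identify this as the main obstacle but do not overcome it, and the nondegeneracy of $H_{\lambda}$ does not produce the required cancellation. Parts $(i)$, $(ii)$ and $(iv)$ of your plan are sound modulo the (repairable) completeness of the energy sublevel of $\mathbf{N}_{\lambda}^{(1)}$, but $(iii)$ as argued does not go through.
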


\begin{proof}
Let us fix $v_{0}\in \mathbf{N}_{\lambda }^{(1)}$ such that $I_{\lambda
}\left( v_{0}\right) <\theta _{\lambda }^{-}+\eta ,$ and fix $\widetilde{s}%
_{\lambda },\widetilde{t}_{\lambda }>1$ as in Lemma \ref{h3-3} such that $%
I_{\lambda }\left( \widetilde{s}_{\lambda }v_{0}^{+}+\widetilde{t}_{\lambda
}v_{0}^{-}\right) \leq 0.$ Let $\phi _{\lambda }:H^{1}(\mathbb{R}%
^{3})\rightarrow \mathbb{R}^{2}$ as in Lemma \ref{f4-2}. We define a map $%
\beta _{\lambda }:Q_{\lambda }\rightarrow H^{1}(\mathbb{R}^{3})$ by%
\begin{equation*}
\beta _{\lambda }\left( s_{1},s_{2}\right) =s_{1}v_{0}^{+}+s_{2}v_{0}^{-},
\end{equation*}%
where $Q_{\lambda }=\left[ 0,\widetilde{s}_{\lambda }\right] \times \left[ 0,%
\widetilde{t}_{\lambda }\right] .$ Then $\phi _{\lambda }\circ \beta
_{\lambda }=id:Q_{\lambda }\rightarrow Q_{\lambda }.$ In particular, there
holds%
\begin{equation}
\deg \left( \phi _{\lambda }\circ \beta _{\lambda },Q_{\lambda },\left(
1,1\right) \right) =1.  \label{20}
\end{equation}%
Moreover, we also have
\begin{equation}
I_{\lambda }\left( \beta _{\lambda }\left( s_{1},s_{2}\right) \right) \leq
I_{\lambda }\left( v_{0}\right) <\theta _{\lambda }^{-}+\eta \text{ for all }%
\left( s_{1},s_{2}\right) \in Q_{\lambda }.  \label{21}
\end{equation}%
Now we choose a Lipschitz continuous function $\chi :\mathbb{R}\rightarrow
\mathbb{R}$ such that $0\leq \chi \leq 1,\chi \left( s\right) =1$ for $s\geq
0$ and $\chi \left( s\right) =0$ for $s\leq -1.$ Since $I_{\lambda }\in
C^{2}(H^{1}(\mathbb{R}^{3}),\mathbb{R)},$ there is a semiflow $\varphi
:[0,\infty )\times H^{1}(\mathbb{R}^{3})\rightarrow H^{1}(\mathbb{R}^{3})$
satisfying%
\begin{equation*}
\left\{
\begin{array}{l}
\frac{\partial }{\partial t}\varphi \left( t,u\right) =-\chi \left(
I_{\lambda }\left( \varphi \left( t,u\right) \right) \right) I_{\lambda
}^{\prime }\left( \varphi \left( t,u\right) \right) , \\
\varphi \left( 0,u\right) =u.%
\end{array}%
\right.
\end{equation*}%
For convenience, we always write $\varphi \left( t,\cdot \right) $ by $%
\varphi ^{t}$ in the sequel. Since $\max \left\{ I_{\lambda }\left(
\widetilde{s}_{\lambda }v_{0}^{+}\right) ,I_{\lambda }\left( \widetilde{t}%
_{\lambda }v_{0}^{-}\right) \right\} <0,$ similar to the argument in Lemma %
\ref{h3-3}, we have
\begin{equation*}
\sup I_{\lambda }\left( \beta _{\lambda }\left( \partial Q_{\lambda }\right)
\right) <2\alpha _{\lambda }^{-}.
\end{equation*}%
Hence,%
\begin{equation*}
\left( \varphi ^{t}\circ \beta _{\lambda }\right) \left( \partial Q_{\lambda
}\right) \cap \mathbf{N}_{\lambda }^{(1)}=\emptyset \text{ for all }t\geq 0.
\end{equation*}%
Using Lemma \ref{f4-2} gives%
\begin{equation*}
\left( \phi _{\lambda }\circ \varphi ^{t}\circ \beta _{\lambda }\right)
\left( y\right) \neq \left( 1,1\right) \text{ for all }y\in \partial
Q_{\lambda }\text{ and }t\geq 0.
\end{equation*}%
By $(\ref{20})$ and the global continuation principle of Leray-Schauder (see
e.g. Zeider \cite[p.629]{Za}), we obtain that there exists a connected
subset $Z\subset Q_{\lambda }\times \left[ 0,1\right] $ such that
\begin{equation*}
\begin{array}{l}
\left( 1,1,0\right) \in Z, \\
\varphi ^{t}\left( \beta _{\lambda }\left( s_{1},s_{2}\right) \right) \in
\mathbf{N}_{\lambda }^{(1)}\text{ for all }\left( s_{1},s_{2},t\right) \in Z,
\\
Z\cap \left( Q_{\lambda }\times \left\{ 1\right\} \right) \neq \emptyset .%
\end{array}%
\end{equation*}%
Set
\begin{equation*}
\Gamma =\left\{ \varphi ^{t}\left( \beta _{\lambda }\left(
s_{1},s_{2}\right) \right) \in \mathbf{N}_{\lambda }^{(1)}:\left(
s_{1},s_{2},t\right) \in Z\right\} .
\end{equation*}%
From $(\ref{21})$ it follows that%
\begin{equation*}
\sup_{u\in \Gamma }I_{\lambda }\left( u\right) <\theta _{\lambda }^{-}+\eta ,
\end{equation*}%
which implies that $\Gamma \subset \mathbf{N}_{\lambda }^{(1)},$ since $Z$
is connected. Now we pick $\left( \bar{s}_{1},\bar{s}_{2},1\right) \in Z\cap
\left( Q_{\lambda }\times \left\{ 1\right\} \right) $ and set
\begin{equation*}
v_{1}:=\psi _{\lambda }\left( \bar{s}_{1},\bar{s}_{2}\right) \text{ and }%
v_{2}:=\varphi ^{1}(v_{1}).
\end{equation*}%
Clearly, $v_{2}\in \Gamma \subset \mathbf{N}_{\lambda }^{(1)}$ and $\Phi
_{\lambda }^{+}\left( v_{2}\right) =\Phi _{\lambda }^{-}\left( v_{2}\right)
=1.$ We distinguish two cases as follows:\newline
Case $(i):\left\Vert \varphi ^{t}\left( v_{1}\right) -v_{2}\right\Vert
_{H^{1}}\leq \mu $ for all $t\in \left[ 0,1\right] .$ By Lemma \ref{f4-1}
one has%
\begin{equation*}
\left\vert \Phi _{\lambda }^{+}\left( \varphi ^{t}\left( v_{1}\right)
\right) -1\right\vert +\left\vert \Phi _{\lambda }^{-}\left( \varphi
^{t}\left( v_{1}\right) \right) -1\right\vert <\epsilon \text{ for all }t\in %
\left[ 0,1\right] .
\end{equation*}%
Choosing $t_{0}\in \left[ 0,1\right] $ with%
\begin{equation*}
\left\Vert I_{\lambda }^{\prime }\left( \varphi ^{t_{0}}\left( v_{1}\right)
\right) \right\Vert _{H^{-1}}=\min_{0\leq t\leq 1}\left\Vert I_{\lambda
}^{\prime }\left( \varphi ^{t}\left( v_{1}\right) \right) \right\Vert
_{H^{-1}}
\end{equation*}%
and setting $u_{0}=\varphi ^{t_{0}}(v_{1}).$ Then, we have%
\begin{eqnarray*}
\eta  &\geq &I_{\lambda }\left( v_{1}\right) -I_{\lambda }\left(
v_{2}\right) =-\int_{0}^{1}\frac{\partial }{\partial t}I_{\lambda }\left(
\varphi ^{t}\left( v_{1}\right) \right) dt \\
&=&\int_{0}^{1}\left\Vert I_{\lambda }^{\prime }\left( \varphi
^{t_{0}}\left( v_{1}\right) \right) \right\Vert _{H^{-1}}^{2}dt\geq
\left\Vert I_{\lambda }^{\prime }\left( u_{0}\right) \right\Vert
_{H^{-1}}^{2}.
\end{eqnarray*}%
Therefore, $u_{0}$ satisfies the desired properties.\newline
Case $(ii):$ There exists $\bar{t}\in \left[ 0,1\right] $ such that $%
\left\Vert \varphi ^{\bar{t}}\left( v_{1}\right) -v_{2}\right\Vert
_{H^{1}}>\mu .$ Let%
\begin{equation*}
t_{1}=\sup \left\{ t\geq \bar{t}\ |\ \left\Vert \varphi ^{t}\left(
v_{1}\right) -v_{2}\right\Vert >\mu \right\} .
\end{equation*}%
Then by Lemma \ref{f4-1}, we have%
\begin{equation*}
\left\vert \Phi _{\lambda }^{+}\left( \varphi ^{t}\left( v_{1}\right)
\right) -1\right\vert +\left\vert \Phi _{\lambda }^{-}\left( \varphi
^{t}\left( v_{1}\right) \right) -1\right\vert <\epsilon
\end{equation*}%
for all $t\in \lbrack t_{1},1].$ Choosing $t_{0}\in \left[ t_{1},1\right] $
with
\begin{equation*}
\left\Vert I_{\lambda }^{\prime }\left( \varphi ^{t_{0}}\left( v_{1}\right)
\right) \right\Vert _{H^{-1}}=\min_{t_{1}\leq t\leq 1}\left\Vert I_{\lambda
}^{\prime }\left( \varphi ^{t}\left( v_{1}\right) \right) \right\Vert
_{H^{-1}}
\end{equation*}%
and setting $u_{0}=\varphi ^{t_{0}}(v_{1}).$ Then there holds%
\begin{equation*}
\mu \leq \int_{t_{1}}^{1}\left\Vert \frac{\partial }{\partial t}\varphi
^{t}\left( v_{1}\right) \right\Vert _{H^{1}}dt\leq
\int_{t_{1}}^{1}\left\Vert I_{\lambda }^{\prime }\left( \varphi ^{t}\left(
v_{1}\right) \right) \right\Vert _{H^{-1}}dt
\end{equation*}%
and%
\begin{eqnarray*}
\eta  &\geq &I_{\lambda }\left( \varphi ^{t_{1}}\left( v_{1}\right) \right)
-I_{\lambda }\left( v_{2}\right) =\int_{t_{1}}^{1}\left\Vert I_{\lambda
}^{\prime }\left( \varphi ^{t}v_{1}\right) \right\Vert _{H^{-1}}^{2}dt \\
&\geq &\left\Vert I_{\lambda }^{\prime }\left( u_{0}\right) \right\Vert
_{H^{-1}}\int_{t_{1}}^{1}\left\Vert I_{\lambda }^{\prime }\left( \varphi
^{t}\left( v_{1}\right) \right) \right\Vert _{H^{-1}}dt,
\end{eqnarray*}%
which implies that $\left\Vert I_{\lambda }^{\prime }\left( u_{0}\right)
\right\Vert _{H^{-1}}\leq \frac{\eta }{\mu }.$ Therefore, $u_{0}$ satisfies
the desired properties. The proof is complete.
\end{proof}

\begin{corollary}
\label{f5-2}For each $0<\lambda <\widetilde{\lambda },$ there exists a
sequence $\left\{ u_{n}\right\} \subset H^{1}(\mathbb{R}^{3})$ such that%
\newline
$\left( i\right) \ dist\left( u_{n},\mathbf{N}_{\lambda }^{(1)}\right)
\rightarrow 0;$\newline
$\left( ii\right) \ I_{\lambda }\left( u_{n}\right) \rightarrow \theta
_{\lambda }^{-};$\newline
$\left( iii\right) \ I_{\lambda }^{\prime }(u_{n})=o(1)$ strongly in $H^{-1}(%
\mathbb{R}^{3});$\newline
$\left( iv\right) $ $\left\vert \Phi _{\lambda }^{+}\left( u_{n}\right)
-1\right\vert +\left\vert \Phi _{\lambda }^{-}\left( u_{n}\right)
-1\right\vert \rightarrow 0.$
\end{corollary}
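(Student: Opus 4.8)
The plan is to obtain the sequence $\{u_{n}\}$ by applying Proposition \ref{f5-1} repeatedly along sequences of the three parameters $\epsilon,\eta,\mu$ that shrink to zero, and then to read off the four convergences $(i)$--$(iv)$ directly from the corresponding four estimates of the proposition. Fix $0<\lambda<\widetilde{\lambda}$. First I would record that the interval of admissible values of $\eta$ in Proposition \ref{f5-1} is genuinely nonempty: by $(\ref{18-0})$ together with the chain $\alpha_{\lambda}^{-}+\alpha_{\lambda}^{\infty}<2\alpha_{\lambda}^{\infty}<C(p)\left(\frac{S_{p}^{p}}{f_{\infty}}\right)^{\frac{2}{p-2}}$ established in the proof of Proposition \ref{h-5}, the number
\begin{equation*}
\delta_{\lambda}:=C(p)\left(\frac{S_{p}^{p}}{f_{\infty}}\right)^{\frac{2}{p-2}}-\theta_{\lambda}^{-}
\end{equation*}
is a fixed positive quantity. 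This guarantees that Proposition \ref{f5-1} may be invoked at every stage of the construction.

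Next I would make the parameter choices explicit. Pick any sequence $\epsilon_{n}\downarrow 0$. For each $n$ let $\mu(\epsilon_{n})>0$ be the constant supplied by Lemma \ref{f4-1}, and set $\mu_{n}=\min\{1/n,\ \mu(\epsilon_{n})/2\}$, so that $\mu_{n}\in(0,\mu(\epsilon_{n}))$ and $\mu_{n}\to 0$. Finally set $\eta_{n}=\min\{\mu_{n}^{2},\ \delta_{\lambda}/2\}$, which satisfies $0<\eta_{n}<\delta_{\lambda}$ and $\eta_{n}\to 0$. Applying Proposition \ref{f5-1} with the triple $(\epsilon_{n},\eta_{n},\mu_{n})$ yields a point $u_{n}\in H^{1}(\mathbb{R}^{3})$ enjoying the four properties of that proposition with these parameters. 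Verifying the claim is then immediate: property $(i)$ gives $\operatorname{dist}(u_{n},\mathbf{N}_{\lambda}^{(1)})\le\mu_{n}\to 0$; property $(ii)$ gives $\theta_{\lambda}^{-}\le I_{\lambda}(u_{n})<\theta_{\lambda}^{-}+\eta_{n}$, whence $I_{\lambda}(u_{n})\to\theta_{\lambda}^{-}$; and property $(iv)$ gives $\left\vert\Phi_{\lambda}^{+}(u_{n})-1\right\vert+\left\vert\Phi_{\lambda}^{-}(u_{n})-1\right\vert<\epsilon_{n}\to 0$. For property $(iii)$, the choice $\eta_{n}\le\mu_{n}^{2}$ forces $\sqrt{\eta_{n}}\le\mu_{n}$ and $\eta_{n}/\mu_{n}\le\mu_{n}$, so that $\left\Vert I_{\lambda}'(u_{n})\right\Vert_{H^{-1}}\le\max\{\sqrt{\eta_{n}},\ \eta_{n}/\mu_{n}\}\le\mu_{n}\to 0$, giving strong convergence of the derivatives to $0$ in $H^{-1}(\mathbb{R}^{3})$.

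There is no genuine analytic difficulty here, since all the substance is already contained in Proposition \ref{f5-1}; the entire content is the bookkeeping of the three parameters. The one step that requires care is $(iii)$: the bound in Proposition \ref{f5-1}$(iii)$ involves the ratio $\eta/\mu$, so $\eta$ must be driven to zero \emph{strictly faster} than $\mu$, and it is precisely the coupling $\eta_{n}=\mu_{n}^{2}$ that makes both $\sqrt{\eta_{n}}$ and $\eta_{n}/\mu_{n}$ vanish simultaneously. One must also keep $\eta_{n}$ below the fixed threshold $\delta_{\lambda}$ at every stage, which is exactly what the strict inequality in $(\ref{18-0})$ secures.
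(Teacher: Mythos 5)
Your proposal is correct and is essentially the argument the paper intends: the corollary is stated as an immediate consequence of Proposition \ref{f5-1}, obtained by letting the parameters $\epsilon,\mu,\eta$ tend to zero with $\eta$ coupled to $\mu$ (e.g.\ $\eta_{n}\leq\mu_{n}^{2}$) so that $\max\{\sqrt{\eta_{n}},\eta_{n}/\mu_{n}\}\rightarrow 0$. Your additional check that $C(p)\left(\frac{S_{p}^{p}}{f_{\infty}}\right)^{\frac{2}{p-2}}-\theta_{\lambda}^{-}>0$ via $(\ref{18-0})$ is a sensible piece of bookkeeping that the paper leaves implicit.
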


\section{Proof of Theorem \protect\ref{t2}}

Before proving Theorem \ref{t2}, we first give a precise description of the
Palais--Smale sequence for $I_{\lambda }$ in this section.

\begin{proposition}
\label{d1}Suppose that $2<p<4,$ and conditions $(F1)-\left( F2\right) $ and $%
(K1)-\left( K2\right) $ hold. Let $\left\{ u_{n}\right\} \subset H^{1}(%
\mathbb{R}^{3})$ be a sequence satisfying\newline
$\left( i\right) \ dist\left( u_{n},\mathbf{N}_{\lambda }^{(1)}\right)
\rightarrow 0;$\newline
$\left( ii\right) \ I_{\lambda }\left( u_{n}\right) \rightarrow \theta
_{\lambda }^{-};$\newline
$\left( iii\right) \ I_{\lambda }^{\prime }(u_{n})=o(1)$ strongly in $H^{-1}(%
\mathbb{R}^{3});$\newline
$\left( iv\right) $ $\left\vert \Phi _{\lambda }^{+}\left( u_{n}\right)
-1\right\vert +\left\vert \Phi _{\lambda }^{-}\left( u_{n}\right)
-1\right\vert \rightarrow 0.$\newline
Then there exist a subsequence $\left\{ u_{n}\right\} $ and $u_{\lambda }\in
\mathbf{N}_{\lambda }^{(1)}$ such that $u_{n}\rightarrow u_{\lambda }$
strongly in $H^{1}(\mathbb{R}^{3})$ for each $0<\lambda <\lambda ^{\ast }.$
\end{proposition}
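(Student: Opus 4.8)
The plan is to run a global (concentration) compactness argument, where the strict energy gap $(\ref{18-0})$, namely $\theta_\lambda^-<\alpha_\lambda^-+\alpha_\lambda^\infty$, is precisely what rules out any loss of mass at infinity. First I would show $\{u_n\}$ is bounded in $H^1(\mathbb{R}^3)$: since $\mathrm{dist}(u_n,\mathbf{N}_\lambda^{(1)})\to 0$ and $\mathbf{N}_\lambda^{(1)}\subset\mathbf{M}_\lambda^{(1)}$, the norm bound $(\ref{4-5})$ gives $\limsup_n\|u_n\|_{H^1}\le D_1$. Passing to a subsequence, $u_n\rightharpoonup u_\lambda$ in $H^1(\mathbb{R}^3)$, $u_n\to u_\lambda$ in $L^q_{\mathrm{loc}}$ for $2\le q<6$, and a.e. Using $(iii)$ together with the weak continuity of the nonlocal term (Lemma \ref{h2} and Lemma \ref{h1}$(iv)$), I would pass to the limit in $\langle I_\lambda'(u_n),\varphi\rangle$ for each $\varphi\in H^1(\mathbb{R}^3)$ and conclude $I_\lambda'(u_\lambda)=0$; thus $u_\lambda$ is a (possibly trivial) critical point of $I_\lambda$.

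Next I would set $v_n:=u_n-u_\lambda\rightharpoonup 0$ and establish the Brezis--Lieb type splitting
\[
\|u_n\|_{H^1}^2=\|u_\lambda\|_{H^1}^2+\|v_n\|_{H^1}^2+o(1),\qquad \int_{\mathbb{R}^3}f|u_n|^p\,dx=\int_{\mathbb{R}^3}f|u_\lambda|^p\,dx+\int_{\mathbb{R}^3}f|v_n|^p\,dx+o(1),
\]
together with the corresponding decomposition of the quartic term,
\[
\int_{\mathbb{R}^3}K\phi_{K,u_n}u_n^2\,dx=\int_{\mathbb{R}^3}K\phi_{K,u_\lambda}u_\lambda^2\,dx+\int_{\mathbb{R}^3}K\phi_{K,v_n}v_n^2\,dx+o(1),
\]
the last identity being exactly what Lemma \ref{h2} is designed to supply (the cross terms vanish by weak convergence). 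Since $v_n\rightharpoonup 0$ concentrates away from every compact set, where $f\to f_\infty$ and $K\to K_\infty$, I would replace $f$ by $f_\infty$ and $K$ by $K_\infty$ in the $v_n$-integrals up to $o(1)$, obtaining $I_\lambda(u_n)=I_\lambda(u_\lambda)+I_\lambda^\infty(v_n)+o(1)$ and, from $I_\lambda'(u_n)\to 0$ and $I_\lambda'(u_\lambda)=0$, also $(I_\lambda^\infty)'(v_n)=o(1)$ in $H^{-1}(\mathbb{R}^3)$.

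The heart of the proof, and the step I expect to be the main obstacle, is to show $v_n\to 0$ strongly. Suppose not; then $\liminf_n\|v_n\|_{H^1}>0$, and since $(I_\lambda^\infty)'(v_n)=o(1)$, a non-vanishing (Lions) argument yields, after a translation, a nontrivial solution of $(SP_\lambda^\infty)$. The delicate point here is to verify that this escaping bubble carries energy at least $\alpha_\lambda^\infty$: its $H^1$-norm is controlled by $D_1$, so it lies (asymptotically) on the bounded branch where $\alpha_\lambda^\infty=\inf I_\lambda^\infty$ is the least energy, giving $\liminf_n I_\lambda^\infty(v_n)\ge\alpha_\lambda^\infty$. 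Now condition $(iv)$ forces $\|u_n^{\pm}\|_{H^1}$ to stay bounded away from $0$ via $(\ref{2-20})$, so if $u_\lambda\equiv 0$ then both a positive and a negative bubble must escape and $\theta_\lambda^-=\lim_n I_\lambda(u_n)\ge 2\alpha_\lambda^\infty$; while if $u_\lambda\not\equiv 0$, then $u_\lambda$ is a nontrivial critical point with $I_\lambda(u_\lambda)\ge\alpha_\lambda^-$, so $\theta_\lambda^-\ge\alpha_\lambda^-+\alpha_\lambda^\infty$. Using $\alpha_\lambda^-<\alpha_\lambda^\infty$ from $(\ref{3-10})$, both alternatives contradict the strict gap $\theta_\lambda^-<\alpha_\lambda^-+\alpha_\lambda^\infty$ in $(\ref{18-0})$. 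Hence $v_n\to 0$, i.e. $u_n\to u_\lambda$ strongly in $H^1(\mathbb{R}^3)$.

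Finally, strong convergence upgrades everything: $I_\lambda(u_\lambda)=\theta_\lambda^-$, and since $u\mapsto u^{\pm}$ is continuous on $H^1$, we get $u_n^{\pm}\to u_\lambda^{\pm}$, so $\|u_\lambda^{\pm}\|_{H^1}\ge(S_p^p/f_{\max})^{1/(p-2)}>0$ and $\langle I_\lambda'(u_\lambda),u_\lambda^{\pm}\rangle=0$, whence $u_\lambda\in\mathbf{N}_\lambda$. To place $u_\lambda$ in $\mathbf{N}_\lambda^{(1)}$ rather than merely $\mathbf{N}_\lambda$, I would use the energy level: since $\theta_\lambda^-<\alpha_\lambda^-+\alpha_\lambda^\infty<2\alpha_\lambda^\infty<C(p)\left(S_p^p/f_\infty\right)^{2/(p-2)}$, the limit lies strictly below the filtration threshold, and the bound $\|u_\lambda\|_{H^1}\le D_1<D_2$ inherited from $(\ref{4-5})$ together with the dichotomy $\|u\|_{H^1}<D_1$ or $\|u\|_{H^1}>D_2$ on $\mathbf{M}_\lambda(C(p)(S_p^p/f_\infty)^{2/(p-2)})$ forces $u_\lambda\in\mathbf{M}_\lambda^{(1)}$, hence $u_\lambda\in\mathbf{N}_\lambda^{(1)}$, completing the argument.
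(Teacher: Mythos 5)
Your proposal is correct in substance and rests on exactly the same mechanism as the paper's proof: boundedness from the $\mathbf{M}_{\lambda}^{(1)}$ filtration (the $D_{1}$ bound in (\ref{4-5})), a concentration--compactness splitting, lower bounds $\alpha_{\lambda}^{\infty}$ for escaping profiles and $\alpha_{\lambda}^{-}$ for the retained part, and the strict gap $\theta_{\lambda}^{-}<\alpha_{\lambda}^{-}+\alpha_{\lambda}^{\infty}$ of (\ref{18-0}). The organization differs: the paper argues by contradiction on $u_{\lambda}^{+}\equiv 0$, translates $u_{n}$ along the non-vanishing points of $u_{n}^{+}$, and lower-bounds the two halves separately via the $J_{\lambda}^{\pm}$ decomposition and Lemma \ref{h3-5} (which applies to points merely \emph{close} to $\mathbf{N}_{\lambda}^{(1)}$), whereas you split $u_{n}=u_{\lambda}+v_{n}$ globally and use that $u_{\lambda}$ is an actual critical point, placed in $\mathbf{M}_{\lambda}^{-}$ by the norm bound $\|u_{\lambda}\|_{H^{1}}\leq D_{1}$, to get $I_{\lambda}(u_{\lambda})\geq\alpha_{\lambda}^{-}$. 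Your route is arguably cleaner, but two points deserve to be made explicit. First, in the case $u_{\lambda}\equiv 0$ the escaping mass of $u_{n}^{+}$ and $u_{n}^{-}$ need not separate into two distinct profiles: a single sign-changing profile $w$ solving $(SP_{\lambda}^{\infty})$ is possible, and then $I_{\lambda}^{\infty}(w)\geq 2\alpha_{\lambda}^{\infty}$ requires the analogue of Lemma \ref{h3-5} for the limit functional (i.e.\ $J^{\pm,\infty}(w^{+},w^{-})\geq\alpha_{\lambda}^{\infty}$), not merely ``two bubbles each of energy $\alpha_{\lambda}^{\infty}$.'' Second, the statement that every escaping profile carries energy at least $\alpha_{\lambda}^{\infty}$ is not automatic from being a nontrivial solution of $(SP_{\lambda}^{\infty})$; it uses that its norm is at most $D_{1}<\bigl(2S_{p}^{p}/(f_{\max}(4-p))\bigr)^{1/(p-2)}$, which forces it onto $(\mathbf{M}_{\lambda}^{\infty})^{-}$ where $\alpha_{\lambda}^{\infty}$ is the least energy --- the same observation also guarantees that any final remainder in the profile decomposition has asymptotically nonnegative energy, which is needed for $2<p<4$ since $I_{\lambda}^{\infty}$ is not bounded below on the whole Nehari manifold. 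With these two points filled in, your argument goes through and reaches the same conclusion, including the identification $u_{\lambda}\in\mathbf{N}_{\lambda}^{(1)}$ via the energy level and the $D_{1}$--$D_{2}$ dichotomy.
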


\begin{proof}
Since $\left\{ u_{n}\right\} $ is bounded in $H^{1}(\mathbb{R}^{3}),$ we can
assume that there exists $u_{\lambda }\in H^{1}(\mathbb{R}^{3})$ such that%
\begin{eqnarray}
u_{n} &\rightharpoonup &u_{\lambda }\text{ and }u_{n}^{\pm }\rightharpoonup
u_{\lambda }^{\pm }\text{ weakly in }H^{1}(\mathbb{R}^{3}),  \notag \\
u_{n} &\rightarrow &u_{\lambda }\text{ and }u_{n}^{\pm }\rightarrow
u_{\lambda }^{\pm }\text{ strongly in }L_{loc}^{r}(\mathbb{R}^{3})\text{ for
}1\leq r<2^{\ast },  \label{18-2} \\
u_{n} &\rightarrow &u_{\lambda }\text{ and }u_{n}^{\pm }\rightarrow
u_{\lambda }^{\pm }\text{ a.e. in }\mathbb{R}^{3}.  \notag
\end{eqnarray}

First, we claim that$\ u_{\lambda }^{\pm }\not\equiv 0.$ Suppose on the
contrary. Then we can assume without loss of generality that $u_{\lambda
}^{+}\equiv 0.$ Since$\ dist\left( u_{n},\mathbf{N}_{\lambda }^{(1)}\right)
\rightarrow 0$ as $n\rightarrow \infty $ and $2\alpha _{\lambda }^{-}\leq
\theta _{\lambda }^{-}<\alpha _{\lambda }^{\infty }+\alpha _{\lambda }^{-},$
we deduce from the Sobolev imbedding theorem that $\left\Vert
u_{n}^{+}\right\Vert _{H^{1}}>\nu >0$ for some constant $\nu $ and for all $%
n>0.$ Applying the concentration-compactness principle of P.L. Lions \cite%
{Li}, there exist positive constants $R,d$ and a sequence $\left\{
x_{n}\right\} \subset \mathbb{R}^{3}$ such that%
\begin{equation}
\int_{B_{R}\left( 0\right) }\left\vert u_{n}^{+}\left( x+x_{n}\right)
\right\vert ^{p}dx\geq d\text{ for }n\text{ sufficiently large.}  \label{23}
\end{equation}%
We will show that $\left\{ x_{n}\right\} $ is a unbounded sequence in $%
\mathbb{R}^{3}.$ Suppose otherwise, we can assume that $x_{n}\rightarrow
x_{0}$ for some $x_{0}\in \mathbb{R}^{3}.$ It follows from $(\ref{18-2})$
and $(\ref{23})$ that%
\begin{equation*}
\int_{B_{R}\left( x_{0}\right) }\left\vert u_{\lambda }^{+}\right\vert
^{p}dx\geq d,
\end{equation*}%
which contradicts with $u_{\lambda }^{+}\equiv 0.$ Thus, $\left\{
x_{n}\right\} $ is a unbounded sequence in $\mathbb{R}^{3}.$ Set $\widetilde{%
u}_{n}\left( x\right) =u_{n}\left( x+x_{n}\right) .$ Clearly, $\left\{
\widetilde{u}_{n}\right\} $ is also bounded in $H^{1}(\mathbb{R}^{3}).$ Then
we may assume that there exists $\widetilde{u}_{0}\in H^{1}(\mathbb{R}^{3})$
such that
\begin{eqnarray}
\widetilde{u}_{n} &\rightharpoonup &\widetilde{u}_{\lambda }\text{ and }%
\widetilde{u}_{n}^{\pm }\rightharpoonup \widetilde{u}_{\lambda }^{\pm }\text{%
weakly in }H^{1}(\mathbb{R}^{3}),  \label{24} \\
\widetilde{u}_{n} &\rightarrow &\widetilde{u}_{\lambda }\text{ and }%
\widetilde{u}_{n}^{\pm }\rightarrow \widetilde{u}_{\lambda }^{\pm }\text{
a.e. in }\mathbb{R}^{3}.  \notag
\end{eqnarray}%
By $(\ref{23}),$ we have $\widetilde{u}_{\lambda }^{+}\not\equiv 0$ in $%
\mathbb{R}^{3}.$ Note that
\begin{equation}
\left\Vert \widetilde{u}_{n}^{\pm }\right\Vert _{H^{1}}=\left\Vert
u_{n}^{\pm }\right\Vert _{H^{1}}<D_{1}<\left( \frac{2S_{p}^{p}}{f_{\max
}\left( 4-p\right) }\right) ^{\frac{1}{p-2}},  \label{18-8}
\end{equation}%
it follows from Fatou's Lemma that
\begin{equation*}
\left\Vert \widetilde{u}_{\lambda }^{+}\right\Vert _{H^{1}}\leq \lim
\inf_{n\rightarrow \infty }\left\Vert \widetilde{u}_{n}^{+}\right\Vert
_{H^{1}}\leq D_{1}<\left( \frac{2S_{p}^{p}}{f_{\max }\left( 4-p\right) }%
\right) ^{\frac{1}{p-2}}.
\end{equation*}%
By conditions $(F1),(K1)$ and$(K2),$ we have $K\left( x-x_{n}\right)
\rightarrow K_{\infty }$ and $f\left( x-x_{n}\right) \rightarrow f_{\infty }$
as $n\rightarrow \infty .$ Thus, from Lemma \ref{h2} and the fact of $%
I_{\lambda }^{\prime }(u_{n})\rightarrow 0$ on $H^{-1}(\mathbb{R}^{3})$ it
follows that%
\begin{eqnarray}
&&\left\Vert \widetilde{u}_{\lambda }^{+}\right\Vert _{H^{1}}^{2}+\lambda
\left( \int_{\mathbb{R}^{3}}K_{\infty }\phi _{K_{\infty },\widetilde{u}%
_{\lambda }^{+}}(\widetilde{u}_{\lambda }^{+})^{2}dx+\int_{\mathbb{R}%
^{3}}K_{\infty }\phi _{K_{\infty },\widetilde{u}_{\lambda }^{-}}(\widetilde{u%
}_{\lambda }^{+})^{2}dx\right)   \notag \\
&=&\int_{\mathbb{R}^{3}}f_{\infty }\left\vert \widetilde{u}_{\lambda
}^{+}\right\vert ^{p}dx,  \label{18-6}
\end{eqnarray}%
and%
\begin{eqnarray}
&&\left\Vert \widetilde{u}_{n}^{\pm }\right\Vert _{H^{1}}^{2}+\lambda \left(
\int_{\mathbb{R}^{3}}K_{\infty }\phi _{K_{\infty },\widetilde{u}_{n}^{\pm }}(%
\widetilde{u}_{n}^{\pm })^{2}dx+\int_{\mathbb{R}^{3}}K_{\infty }\phi
_{K_{\infty },\widetilde{u}_{n}^{\mp }}^{2}(\widetilde{u}_{n}^{\pm
})^{2}dx\right)   \notag \\
&=&\int_{\mathbb{R}^{3}}f_{\infty }\left\vert \widetilde{u}_{n}^{\pm
}\right\vert ^{p}dx+o(1).  \label{18-4}
\end{eqnarray}%
Set $v_{n}=\widetilde{u}_{n}^{+}-\widetilde{u}_{\lambda }^{+}.$ We
distinguish two cases as follows:\newline
Case $I:\left\Vert v_{n}\right\Vert _{H^{1}}\rightarrow 0$ as $n\rightarrow
\infty .$ Since $dist\left( u_{n},\mathbf{N}_{\lambda }^{(1)}\right)
\rightarrow 0,$ it follows from $(\ref{18-6})$ and Lemma \ref{h3-5} that%
\begin{eqnarray*}
I_{\lambda }(u_{n}) &=&J_{\lambda }^{+}(u_{n}^{+},u_{n}^{-})+J_{\lambda
}^{-}(u_{n}^{+},u_{n}^{-}) \\
&=&J_{\lambda }^{+}\left( \widetilde{u}_{n}^{+},\widetilde{u}_{n}^{-}\right)
+J_{\lambda }^{-}\left( u_{n}^{+},u_{n}^{-}\right)  \\
&=&(J_{\lambda }^{+})^{\infty }\left( \widetilde{u}_{\lambda }^{+},%
\widetilde{u}_{\lambda }^{-}\right) +J_{\lambda }^{-}\left(
u_{n}^{+},u_{n}^{-}\right) +o\left( 1\right)  \\
&\geq &\alpha _{\lambda }^{\infty }+\alpha _{\lambda }^{-}+o(1),
\end{eqnarray*}%
where $(J_{\lambda }^{+})^{\infty }=J_{\lambda }^{+}$ with $K(x)\equiv
K_{\infty }$ and $f(x)\equiv f_{\infty }$. Thus, $\theta _{\lambda }^{-}\geq
\alpha _{\lambda }^{\infty }+\alpha _{\lambda }^{-},$ which contradicts to $%
\theta _{\lambda }^{-}<\alpha _{\lambda }^{\infty }+\alpha _{\lambda }^{-}.$%
\newline
Case $II:\left\Vert v_{n}\right\Vert _{H^{1}}\geq c_{0}$ for large $n$ and
for some constant $c_{0}>0.$ Following Brezis-Lieb Lemma \cite{BLi} and \cite%
[Lemma 2.2]{ZZ}, together with $(\ref{18-6})$ and $(\ref{18-4}),$ we have%
\begin{equation}
\left\Vert v_{n}\right\Vert _{H^{1}}^{2}+\lambda \left( \int_{\mathbb{R}%
^{3}}K_{\infty }\phi _{K_{\infty },v_{n}}v_{n}^{2}dx+\int_{\mathbb{R}%
^{3}}K_{\infty }\phi _{K_{\infty },\widetilde{u}_{n}^{-}}v_{n}^{2}dx\right)
-\int_{\mathbb{R}^{3}}f_{\infty }\left\vert v_{n}\right\vert ^{p}dx=o(1).
\label{18-9}
\end{equation}%
Note that $\Vert \widetilde{u}_{\lambda }^{+}\Vert _{H^{1}}\geq \left( \frac{%
S_{p}^{p}}{f_{\max }}\right) ^{\frac{1}{p-2}}$ and $\left\Vert
v_{n}\right\Vert _{H^{1}}^{2}=\left\Vert \widetilde{u}_{n}^{+}\right\Vert
_{H^{1}}^{2}-\Vert \widetilde{u}_{\lambda }^{+}\Vert _{H^{1}}^{2}+o(1).$
Then it follows from $\left( \ref{24}\right) $ and $\left( \ref{18-8}\right)
$ that%
\begin{equation}
\left\Vert v_{n}\right\Vert _{H^{1}}<D_{1}<\left( \frac{2S_{p}^{p}}{f_{\max
}\left( 4-p\right) }\right) ^{\frac{1}{p-2}}\text{ for sufficiently large }n.
\label{18-10}
\end{equation}%
By $(\ref{18-9}),(\ref{18-10})$ and the fact of $\left\Vert v_{n}\right\Vert
_{H^{1}}\geq c_{0}$ for sufficiently large $n,$ it is straightforward to
find a sequence $\left\{ s_{n}\right\} \subset \mathbb{R}^{+}$ with $%
s_{n}\rightarrow 1$ as $n\rightarrow \infty $ such that
\begin{eqnarray*}
&&\left\Vert s_{n}v_{n}\right\Vert _{H^{1}}^{2}+\lambda \left( \int_{\mathbb{%
R}^{3}}K_{\infty }\phi _{K_{\infty },s_{n}v_{n}}(s_{n}v_{n})^{2}dx+\int_{%
\mathbb{R}^{3}}K_{\infty }\phi _{K_{\infty },\widetilde{u}%
_{n}^{-}}(s_{n}v_{n})^{2}dx\right)  \\
&=&\int_{\mathbb{R}^{3}}f_{\infty }\left\vert s_{n}v_{n}\right\vert ^{p}dx.
\end{eqnarray*}%
Thus, similar to the argument in Lemma \ref{h3-5}, we obtain%
\begin{eqnarray*}
&&\frac{1}{2}\left\Vert v_{n}\right\Vert _{H^{1}}^{2}+\frac{\lambda }{4}%
\left( \int_{\mathbb{R}^{3}}K_{\infty }\phi _{K_{\infty
},v_{n}}v_{n}^{2}dx+\int_{\mathbb{R}^{3}}K_{\infty }\phi _{K_{\infty },%
\widetilde{u}_{n}^{-}}v_{n}^{2}dx\right) -\frac{1}{p}\int_{\mathbb{R}%
^{3}}f_{\infty }\left\vert v_{n}\right\vert ^{p}dx \\
&\geq &\alpha _{\lambda }^{\infty }+o(1),
\end{eqnarray*}%
where we have used the fact of $s_{n}\rightarrow 1.$ It follows from Lemma %
\ref{h3-5}, Brezis-Lieb Lemma \cite{BLi} and \cite[Lemma 2.2]{ZZ} that
\begin{eqnarray*}
I_{\lambda }\left( u_{n}\right)  &=&J_{\lambda }^{+}\left(
u_{n}^{+},u_{n}^{-}\right) +J_{\lambda }^{-}\left(
u_{n}^{+},u_{n}^{-}\right)  \\
&=&(J_{\lambda }^{+})^{\infty }\left( \widetilde{u}_{n}^{+},\widetilde{u}%
_{n}^{-}\right) +J_{\lambda }^{-}\left( u_{n}^{+},u_{n}^{-}\right) +o\left(
1\right)  \\
&=&\frac{1}{2}\left\Vert v_{n}\right\Vert _{H^{1}}^{2}+\frac{\lambda }{4}%
\left( \int_{\mathbb{R}^{3}}K_{\infty }\phi _{K_{\infty
},v_{n}}v_{n}^{2}dx+\int_{\mathbb{R}^{3}}K_{\infty }\phi _{K_{\infty },%
\widetilde{u}_{n}^{-}}v_{n}^{2}dx\right)  \\
&&-\frac{1}{p}\int_{\mathbb{R}^{3}}f_{\infty }\left\vert v_{n}\right\vert
^{p}dx+(J_{\lambda }^{+})^{\infty }\left( \widetilde{u}_{\lambda }^{+},%
\widetilde{u}_{\lambda }^{-}\right) +J_{\lambda }^{-}\left(
u_{n}^{+},u_{n}^{-}\right) +o\left( 1\right)  \\
&\geq &2\alpha _{\lambda }^{\infty }+\alpha _{\lambda }^{-}+o(1),
\end{eqnarray*}%
which implies that
\begin{equation*}
\lim_{n\rightarrow \infty }I_{\lambda }(u_{n})=\theta _{\lambda }^{-}\geq
2\alpha _{\lambda }^{\infty }+\alpha _{\lambda }^{-}.
\end{equation*}%
This contradicts to $(\ref{18-0}).$ Hence, $u_{\lambda }^{+}\not\equiv 0.$
Similarly, we also obtain $u_{\lambda }^{-}\not\equiv 0.$

Next, we show that $u_{n}\rightarrow u_{0}$ strongly in $H^{1}(\mathbb{R}%
^{3}).$ Similar to the argument of Case $II,$ we can easily arrive at the
conclusion. Moreover, we have $u_{\lambda }\in \mathbf{N}_{\lambda }^{(1)}$
and $I_{\lambda }(u_{\lambda })=\theta _{\lambda }^{-}.$ This indicates that
$u_{\lambda }$ is a nodal solution for each $0<\lambda <\lambda ^{\ast }$.
The proof is complete.
\end{proof}

\textbf{We are ready to prove Theorem \ref{t2}:} By Corollary \ref{f5-2} and
Proposition \ref{d1}, for each $0<\lambda <\lambda ^{\ast },$ Eq. $\left(
E_{\lambda }\right) $ has a nodal solution $u_{\lambda }$ such that $%
I_{\lambda }\left( u_{\lambda }\right) =\theta _{\lambda }^{-}.$ Moreover,
similar to the argument in \cite[Theorem 1.3]{AS}, $u_{\lambda }$ changes
sign exactly once in $\mathbb{R}^{3}.$ Consequently, system $(SP_{\lambda })$
admits a nodal solution $(u_{\lambda },\phi _{K,u_{\lambda }})\in H^{1}(%
\mathbb{R}^{3})\times D^{1,2}(\mathbb{R}^{3})$ for each $0<\lambda <\lambda
^{\ast },$ which changes sign exactly once in $\mathbb{R}^{3}.$

\section{Proof of Theorem \protect\ref{t3}}

As in Section 5, we also give a precise description of the Palais--Smale
sequence for $I_{\lambda }$ at the beginning of this section.

\begin{proposition}
\label{d2}Suppose that $2<p<4,$ and conditions ${(F1)}-{(F2)},(K1)$ and $%
(K3) $ hold. Let $\left\{ \overline{u}_{n}\right\} \subset H^{1}(\mathbb{R}%
^{3})$ be a sequence satisfying\newline
$\left( i\right) \ dist\left( \overline{u}_{n},\mathbf{N}_{\lambda
}^{(1)}\right) \rightarrow 0;$\newline
$\left( ii\right) \ I_{\lambda }(\overline{u}_{n})\rightarrow \theta
_{\lambda }^{-};$\newline
$\left( iii\right) \ I_{\lambda }^{\prime }(\overline{u}_{n})=o(1)$ strongly
in $H^{-1}(\mathbb{R}^{3});$\newline
$\left( iv\right) $ $\left\vert \Phi _{\lambda }^{+}\left( \overline{u}%
_{n}\right) -1\right\vert +\left\vert \Phi _{\lambda }^{-}\left( \overline{u}%
_{n}\right) -1\right\vert \rightarrow 0.$\newline
Then there exist a subsequence $\{\overline{u}_{n}\}$ and $u_{\lambda }\in
\mathbf{N}_{\lambda }^{(1)}$ such that $\overline{u}_{n}\rightarrow
u_{\lambda }$ strongly in $H^{1}(\mathbb{R}^{3})$ for each $0<\lambda <%
\overline{\lambda }^{\ast }.$
\end{proposition}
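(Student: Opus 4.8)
The plan is to follow the scheme of Proposition \ref{d1}, but with the limiting system $(SP_{\lambda }^{\infty })$ replaced by the limiting Schr\"{o}dinger equation $(E_{0}^{\infty })$, whose least Nehari energy is $\alpha _{0}^{\infty }$; this substitution is precisely what condition $(K3)$ forces, and it explains why the relevant energy gap is now $\theta _{\lambda }^{-}<\alpha _{\lambda }^{-}+\alpha _{0}^{\infty }$. First I would observe that, since $\mathrm{dist}(\overline{u}_{n},\mathbf{N}_{\lambda }^{(1)})\rightarrow 0$ and $\mathbf{N}_{\lambda }^{(1)}\subset \mathbf{M}_{\lambda }^{(1)}$ is bounded by $(\ref{4-5})$ and $(\ref{2-20})$, the sequence $\{\overline{u}_{n}\}$ is bounded in $H^{1}(\mathbb{R}^{3})$. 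Passing to a subsequence I extract a weak limit $u_{\lambda }$ together with the usual a.e. and $L_{loc}^{r}$ convergences of $\overline{u}_{n}$ and $\overline{u}_{n}^{\pm }$.

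The main step is to prove $u_{\lambda }^{\pm }\not\equiv 0$. Arguing by contradiction, suppose $u_{\lambda }^{+}\equiv 0$. Since property $(iv)$ together with the lower bound $\theta _{\lambda }^{-}\geq 2\alpha _{\lambda }^{-}$ forces $\Vert \overline{u}_{n}^{+}\Vert _{H^{1}}\geq \nu >0$, Lions' concentration--compactness principle yields $R,d>0$ and points $\{x_{n}\}$ with $\int_{B_{R}(0)}|\overline{u}_{n}^{+}(x+x_{n})|^{p}dx\geq d$; exactly as in Proposition \ref{d1}, the assumption $u_{\lambda }^{+}\equiv 0$ forces $\{x_{n}\}$ to be unbounded. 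Setting $\widetilde{u}_{n}(x)=\overline{u}_{n}(x+x_{n})$ and passing to a weak limit $\widetilde{u}_{\lambda }$, the concentration estimate gives $\widetilde{u}_{\lambda }^{+}\not\equiv 0$. Here is the \emph{crucial} place where $(K3)$ enters: because $K\in L^{2}(\mathbb{R}^{3})$ and $K(x)\rightarrow 0$ as $|x|\rightarrow \infty $, the translated weights $K(\cdot +x_{n})$ converge weakly to $0$, so every Poisson contribution drops out of the limiting identity, which reduces to $\Vert \widetilde{u}_{\lambda }^{+}\Vert _{H^{1}}^{2}=\int_{\mathbb{R}^{3}}f_{\infty }|\widetilde{u}_{\lambda }^{+}|^{p}dx$. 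Thus $\widetilde{u}_{\lambda }^{+}$ lies on the Nehari manifold of $(E_{0}^{\infty })$ and carries energy at least $\alpha _{0}^{\infty }$.

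I would then run the same two-case dichotomy as in Proposition \ref{d1} on $v_{n}=\widetilde{u}_{n}^{+}-\widetilde{u}_{\lambda }^{+}$, using the Brezis--Lieb lemma and Lemma \ref{h3-5}. In the case $\Vert v_{n}\Vert _{H^{1}}\rightarrow 0$ I obtain $\theta _{\lambda }^{-}\geq \alpha _{0}^{\infty }+\alpha _{\lambda }^{-}$, while in the case $\Vert v_{n}\Vert _{H^{1}}\geq c_{0}>0$ the nonvanishing remainder contributes a second copy of the limiting level, giving $\theta _{\lambda }^{-}\geq 2\alpha _{0}^{\infty }+\alpha _{\lambda }^{-}$; both contradict $\theta _{\lambda }^{-}<\alpha _{\lambda }^{-}+\alpha _{0}^{\infty }$. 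Hence $u_{\lambda }^{+}\not\equiv 0$, and symmetrically $u_{\lambda }^{-}\not\equiv 0$. Finally, to upgrade to strong convergence I would use that, under $(K3)$, the nonlocal term is weakly continuous by Lemma \ref{h1}$(v)$, so $u_{\lambda }$ is a critical point of $I_{\lambda }$ and all Poisson differences vanish in the limit; applying the Brezis--Lieb splitting exactly as in Case $II$ shows that $\overline{u}_{n}-u_{\lambda }$ cannot carry a nonvanishing mass without producing an energy excess of $\alpha _{0}^{\infty }$, once more contradicting the gap. Therefore $\overline{u}_{n}\rightarrow u_{\lambda }$ strongly in $H^{1}(\mathbb{R}^{3})$, and passing to the limit yields $u_{\lambda }\in \mathbf{N}_{\lambda }^{(1)}$ with $I_{\lambda }(u_{\lambda })=\theta _{\lambda }^{-}$.

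I expect the main obstacle to be the concentration--compactness/nontriviality step: one must justify rigorously that the translated weight $K(\cdot +x_{n})$ contributes nothing in the limit (rather than a $K_{\infty }$-term as under $(K2)$), thereby correctly identifying the blown-up profile with the pure Schr\"{o}dinger problem $(E_{0}^{\infty })$ and its level $\alpha _{0}^{\infty }$. Once this is in place, the remainder is a faithful adaptation of Proposition \ref{d1}, now comparing against $\alpha _{0}^{\infty }$ instead of $\alpha _{\lambda }^{\infty }$.
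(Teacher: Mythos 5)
Your proposal is correct and takes essentially the same route as the paper, which in fact omits the proof entirely with the remark that it is ``analogous to that of Proposition \ref{d1}.'' You correctly identify the single substantive change: under $(K3)$ the translated weight $K(\cdot+x_{n})$ vanishes in the limit, so the blown-up profile solves $(E_{0}^{\infty})$ and carries energy at least $\alpha_{0}^{\infty}$, and the contradiction is now with the gap $\theta_{\lambda}^{-}<\alpha_{\lambda}^{-}+\alpha_{0}^{\infty}$ furnished by Proposition \ref{h-6}.
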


\begin{proof}
The proof is analogous to that of Proposition \ref{d1}, and we omit it here.
\end{proof}

\textbf{We now begin to prove Theorem \ref{t3}:} By Corollary \ref{f5-2} and
Proposition \ref{d2}, for each $0<\lambda <\overline{\lambda }^{\ast },$ Eq.
$\left( E_{\lambda }\right) $ admits a nodal solution $u_{\lambda }$ such
that $I_{\lambda }\left( u_{\lambda }\right) =\theta _{\lambda }^{-}.$
Moreover, similar to the argument in \cite[Theorem 1.3]{AS}, $u_{\lambda }$
changes sign exactly once in $\mathbb{R}^{3}.$ Consequently, system $%
(SP_{\lambda })$ admits a nodal solution $(u_{\lambda },\phi _{K,u_{\lambda
}})\in H^{1}(\mathbb{R}^{3})\times D^{1,2}(\mathbb{R}^{3})$ for each $%
0<\lambda <\overline{\lambda }^{\ast },$ which changes sign exactly once in $%
\mathbb{R}^{3}.$

\section{Acknowledgments}

J. Sun is supported by the National Natural Science Foundation of China
(Grant No. 11671236). T.F. Wu is supported in part by the Ministry of
Science and Technology, Taiwan (Grant 106-2115-M-390-002-MY2), the
Mathematics Research Promotion Center, Taiwan and the National Center for
Theoretical Sciences, Taiwan.

\end{document}